\newcommand\comment[1]{}
\theoremstyle{remark} \newtheorem{remark}{\textup{\textbf{Remark}}}}
\def\bx{\bm{x}}
\def\by{\bm{x}}
\def\by{\bm{y}}
\def\D{\mathrm{d}}
\newcommand{\be}{\begin{equation}}
\newcommand{\ee}{\end{equation}}
\newcommand{\ba}{\begin{aligned}}
\newcommand{\ea}{\end{aligned}}
\newcommand{\nexp}{N_{\textup{exp}}}
\renewcommand{\theequation}{\arabic{section}.\arabic{equation}}
\begin{document}
\bibliographystyle{unsrt}

\title{Compressing the memory variables in constant-Q viscoelastic wave propagation via an improved sum-of-exponentials approximation}
\author{
Xu Guo\footnotemark[1]
\and
Shidong Jiang\footnotemark[2]
\and
Yunfeng Xiong\footnotemark[3] \footnotemark[5]
\and
Jiwei Zhang\footnotemark[4]
}
\renewcommand{\thefootnote}{\fnsymbol{footnote}}
\footnotetext[1]{Geotechnical and Structural Engineering Center, Shandong University, Jinan, Shandong 250061, China
(\email{guoxu@sdu.edu.cn})}
\footnotetext[2]{Center for Computational Mathematics, Flatiron Institute, Simons Foundation,
  New York, New York 10010 (\email{sjiang@flatironinstitute.org}).}
  \footnotetext[3]{School of Mathematical Sciences, Beijing Normal University, Beijing 100091, China
(\email{yfxiong@bnu.edu.cn}).}
  \footnotetext[4]{School of Mathematics and Statistics, and Hubei Key Laboratory of Computational Science, Wuhan University, Wuhan 430072, China.
(\email{jiweizhang@whu.edu.cn}).}
\footnotetext[5]{To whom correspondence should be addressed.}
\date{\today}
\maketitle

\begin{abstract}
  Earth introduces strong attenuation and dispersion to propagating waves. 
  The time-fractional wave equation with very small fractional exponent, based on Kjartansson's
  constant-Q theory, is widely recognized in the field of geophysics as a reliable model for
  frequency-independent Q anelastic behavior.
  Nonetheless, the numerical resolution of this equation poses considerable challenges
  due to the requirement of storing a complete time history of wavefields.
  To address this computational challenge, we present a novel approach: a nearly optimal
  sum-of-exponentials (SOE) approximation to the Caputo fractional derivative with very small
  fractional exponent, utilizing the machinery of generalized Gaussian quadrature.
  This method minimizes the number of memory variables needed to approximate the power
  attenuation law within a specified error tolerance. We establish a mathematical equivalence
  between this SOE approximation and the continuous fractional stress-strain relationship,
  relating it to the generalized Maxwell body model. Furthermore, we prove an improved SOE
  approximation error bound to thoroughly assess the ability of rheological models to
  replicate the power attenuation law. Numerical simulations on constant-Q viscoacoustic equation in 3D homogeneous media
  and variable-order P- and S- viscoelastic wave equations in 3D inhomogeneous media
  are performed. These simulations demonstrate that our proposed technique accurately captures
  changes in amplitude and phase resulting from material anelasticity. 
  This advancement provides a significant step towards the practical
  usage of the time-fractional wave equation in seismic inversion.

\bigskip 
\noindent {\bf AMS subject classifications:}
74D05;	
65M22; 	
41A05;	
35R11;	
33F05 	

\noindent {\bf Keywords:}
viscoelasticity, fractional derivative,
sum-of-exponentials approximation,
generalized Gaussian quadrature,
fast algorithm,
rheological model
\end{abstract}

\newsavebox{\tablebox}
\setcounter{tocdepth}{2}

\section{Introduction}
Seismic wave propagation has anelastic characteristics in real earth materials due to the loss of energy from the geometrical effect of the enlargement of the wavefront and the intrinsic absorption of the earth~\cite{LiuAndersonKanamori1976,Kjartansson1979,CarcioneCavalliniMainardiHanyga2002}.
The attenuation of seismic waves causes a decrease in the resolution of seismic images with depth, and transmission losses lead to variations in amplitude with offset~\cite{UrsinToverud2002}.
Therefore, accurate attenuation compensation is crucial for enhancing the reliability of seismic data interpretation~\cite{EmmerichKorn1987,YangBrossierMetivierVirieux2017,Tromp2020,MirzanejadTranWang2022,WangHarrierBaiSaadYangChen2022}. 

Empirically, the attenuation in an elastic solid is often described by formulations using memory variables, where the elastic moduli act as time convolution operators instead of constants \cite{UrsinToverud2002,bk:MarquesGreus2012}.
Energy dissipation is characterized by the quality factor $Q$, which is loosely defined as the number of wavelengths through which a wave can propagate in a medium before its amplitude decreases by
$e^{-\pi}$ \cite{BlanchRobertssonSymes1995}.
The most common methods are rheological models composed of Hookean elements (springs) and dashpots with different connecting styles, including the standard linear solid \cite{LiuAndersonKanamori1976}, the generalized Maxwell body (GMB), and the generalized Zener body (GZB)
\cite{BlanchRobertssonSymes1995,DayMinster1984,ParkSchapery1999,MoczoKristek2005,ZhuCarcioneHarris2013,CaoYin2015,BlancKomatitschChaljubLombardXie2016}.
These methods are based on an approximation of the viscoacoustic/viscoelastic modulus by a low-order rational function, or equivalently, replacing the relaxation spectrum with a discrete one, so that the frequency-independent Q behavior is mimicked by a superposition of mechanical elements
\cite{GrobyTsogka2006,WangXingZhuZhouShi2022}.
Despite their sound physical interpretation and convenience in the time-domain finite-difference implementation \cite{GrobyTsogka2006,RobertssonBlanchSymes1994}, there remains an open question regarding the overall accuracy of such approaches, especially when propagating waves over a large number of wavelengths \cite{BlancKomatitschChaljubLombardXie2016}.
Additionally, the quality factor $Q$ is implicitly parameterized by a set of parameters obtained via either linear or nonlinear optimization \cite{BlanchRobertssonSymes1995,DayMinster1984,BlancKomatitschChaljubLombardXie2016}, meaning that parameter crosstalk presents a significant challenge to the nonlinear inversion of the $Q$ value \cite{YangBrossierMetivierVirieux2017,WangLiuJiChenLiu2019}.

By contrast, the power attenuation law, which belongs to a family of fractional-derivative viscoelastic models, has garnered significant attention from geophysicists due to its succinct capability to describe the frequency-independent attenuation behavior consistently with the requirements of causality and dissipativity \cite{Kjartansson1979,CarcioneKosloffKosloff1988,HanyaSerdynska2003,bk:Mainardi2010}.
The time-fractional viscoelastic wave equation \cite{CarcioneKosloffKosloff1988,Carcione2009,bk:Mainardi2010,Zhu2017}, based on Kjartansson’s constant-Q theory \cite{Kjartansson1979}, is entirely specified by two parameters: the phase velocity at the reference frequency and the quality factor $Q$ \cite{Carcione2009}.
Consequently, it provides a much more economical parameterization across a broad range of frequencies and might potentially reduce the parameter crosstalk in the Q-inversion problem \cite{Zhu2017}.
However, the forward and inverse wavefield modeling via the time-fractional wave equation remains challenging because of the need to store a complete time history \cite{SchmidtGaul2006,Diethelm2008,HuangLiLiZengGuo2022}.
Since the loss caused by scattering and absorption is relatively minor in most situations, the attenuation of seismic energy is only treated as a minor perturbation to the propagation \cite{Kjartansson1979,SunGaoLiu2019}.
For instance, $Q$ values typically range from $10$ to $100$ in real viscoelastic media,
and the fractional exponent $2\gamma = \frac{2}{\pi} \arctan \frac{1}{Q}$  ranges from $10^{-3}$ to $10^{-1}$  \cite{CarcioneCavalliniMainardiHanyga2002}.
Due to the Abel kernel's flatness, the classical finite difference methods for the temporal fractional stress-strain relation struggle with the extensive memory requirement. As indicated in
\cite{Zhu2017,LiuHuGuoLuo2018},
this requires the storage of $400$-$500$ levels of the wavefield history in real imaging applications, resulting in substantial computational efforts to address the nonlocality, which hinders the broader application of Kjartansson’s theory.

However, such long-memory dependence seems to contradict our intuition that stress should rely more on the immediate history of strain than on its distant history \cite{bk:Christensen2012}. Since the fractional Caputo derivative with a small exponent is very close to an identity operator, a more appropriate discretized approximation should adhere to the short-memory principle \cite{YuanAgrawal2002,BlancChiavassaLombard2014,XiongGuo2022}.
To achieve this, an effective strategy is to convert the flat Abel kernel into a localized power function through an integral representation of the power creep, yielding the sum-of-exponentials (SOE) approximation to the fractional stress-strain relation. The challenge lies in accurately mimicking a given attenuation law using a minimal set of memory variables \cite{BlancKomatitschChaljubLombardXie2016}, that is, in determining how to reduce the number of exponentials, $\nexp$, for a given error tolerance.

The purpose of our paper is two-fold. First, we aim to explore the limit of compressing the memory variables for a practically small fractional exponent. While there is a vast body of literature devoted to constructing efficient SOE approximations 
\cite{jiang2001thesis,JiangGreengard2004,BeylkinMonzon2010,JiangZhangZhangZhang2017,JRLi2010,HuangLiLiZengGuo2022},
existing methods might still suffer from stagnation. This means they might require more exponentials than necessary due to the low-exponent breakdown in the initial integral representation. To address this challenge, we begin with a new integral representation of the power function and employ the generalized Gaussian quadrature (GGQ)
machinery \cite{BremerGimbutasRokhlin2010,ggq2,ggq3}
to construct a (nearly) optimal SOE approximation, which requires less than half the nodes
compared to existing methods \cite{JiangZhangZhangZhang2017}. 
To validate the accuracy of our new SOE approximation, we simulate the constant-Q viscoacoustic wave equation in 3-D homogeneous media and compare the results with the analytical solution
\cite{Hanyga2002}. Numerical results show that the new SOE approximation can accurately capture the changes in both amplitude and phase induced by material anelasticity \cite{ZhuCarcioneHarris2013}.
For the strong attenuation case ($Q=10$), it can achieve a relative error less than $10^{-3}$ with
$\nexp < 10$. Additionally, it demands fewer memory variables for larger $Q$ values, and
$\nexp$ decreases as the fractional exponent approaches zero, ensuring that stagnation is
effectively circumvented.

Second, we establish a rigorous mathematical equivalence between the SOE approximation of the power creep model and GMB. The latter has been proven to be equivalent to other rheological models such as GZB \cite{MoczoKristek2005,CaoYin2015}.
It is observed that the discretized constitutive equation \eqref{fractional_stress_strain} can be
transformed into a differential equation form \cite{YuanAgrawal2002,XiongGuo2022}, leading to the relaxed dynamics of a collection of parallel springs and dashpots \cite{ParkSchapery1999,SchmidtGaul2006}.
In some sense, the SOE approximation provides a nearly optimal finite spring-dashpot representation to approximate the continuous fractional stress-strain relation through a uniformly accurate curve fitting to the power function $(t/t_0)^{-2\gamma}$  \cite{ParkSchapery1999,ZhuCarcioneHarris2013}.
An improved error bound for the new SOE approximation is presented, ensuring numerical accuracy within the specified wave propagation time and range of wavelengths, and firmly establishing the ability of rheological models to approximate the frequency-independent Q behavior. 
Compared with other approaches, such as the Yuan-Agrawal method \cite{YuanAgrawal2002,LuHanyga2005} or the diffusive approximation \cite{Diethelm2008,BlancChiavassaLombard2014},
the new SOE approximation avoids the augmentation of projection errors \cite{XiongGuo2022}.
This is because it possesses greater flexibility in stiffnesses and viscosities to adapt to the viscoelastic behaviors of the given material data, thereby addressing some of the criticisms noted in
\cite{SchmidtGaul2006}. This represents a significant advancement towards the practical use of the time-fractional wave equation in real 3D geological applications.

The rest of this paper is organized as follows. \Cref{sec.backgroud} provides a brief introduction to the constant-Q viscoelastic and viscoacoustic wave equations.
In \cref{sec.scheme}, a new SOE approximation to the time-fractional constitutive relation is developed, along with an exponential operator splitting scheme. An improved SOE error bound is proved to rigorously establish the ability of rheological models to mimic the power attenuation law.
\Cref{sec.numerical} showcases numerical experiments on the viscoacoustic wave equation in 3D homogeneous media and the variable-order P- and S- viscoelastic wave equations in 3D inhomogeneous media to validate the convergence and accuracy of the new SOE approximation. Finally, conclusions and discussions are presented in \cref{sec.conclusion}.

\section{The constant-Q wave equation}
\label{sec.backgroud}

In this section, we briefly review the theory of constant-Q viscoelastic wave equation, which is completely characterized by the group velocities $c_P$, $c_S$
and the quality factors $Q_P$, $Q_S$ for longitudinal and shear waves (P- and S-waves), respectively. 
The dynamics of constant-Q wave propagation is governed by three sets of equations.  

First, the conservation of linear momentum leads to 
\begin{equation}\label{conservation_momentum}
  \rho(\bx) \frac{\partial^2}{\partial t^2} u_i(\bx, t) = \sum_{j=1}^3 \frac{\partial}{\partial x_j} \sigma_{ij}(\bx, t) + f_i(\bx, t),
  \quad i = 1, 2, 3, \; \bx \in \mathbb{R}^3, 
\end{equation}
where $\sigma_{ij}$ are the components of the stress tensor, $u_i$ are the components of the displacement vector, $\rho$ is the mass density and $f_i$ are components of the body forces per unit (source term). 

Second, the strain tensor $\varepsilon_{ij}$ is related to the displacement vector via the formula
\begin{equation}\label{defintion_strain}
\varepsilon_{ij}(\bx, t) = \frac{1}{2} \left(  \frac{\partial}{\partial x_i} u_j(\bx, t) +  \frac{\partial}{\partial x_j} u_i(\bx, t) \right), \quad i, j = 1, \dots, 3.
\end{equation}

Third, the constitutive equation for the viscoelastic media \cite{Carcione2009} is 
\begin{equation}\label{stress_strain_relation}
  \ba
  \sigma_{ij}(\bx, t) &= \frac{M_P(\bx)}{\omega_0^{2\gamma_P(\bx)}} { _{C}D_t^{2\gamma_P(\bx)}}\left[ \sum_{k=1}^3\varepsilon_{kk}(\bx, t) \delta_{ij}\right]\\
  &+   \frac{2 M_S(\bx)}{\omega_0^{2\gamma_S(\bx)}}  {_{C}D_t^{2\gamma_S(\bx)}} \left[ \varepsilon_{ij}(\bx, t) - \sum_{k=1}^3\varepsilon_{kk}(\bx, t) \delta_{ij} \right], \quad i, j=1,\dots,3.
\ea
\end{equation} 
Here $\omega_0 =t_0^{-1}$ is the reference frequency, $\gamma_P(\bx) = \pi^{-1}\arctan Q_P^{-1}(\bx)$, $\gamma_S(\bx) = \pi^{-1}\arctan Q_S^{-1}(\bx)$. $M_P(\bx)  = \rho(\bx) c_P^2(\bx) \cos^2(\pi \gamma_P(\bx)/2)$, $M_S(\bx)  = \rho(\bx) c_S^2(\bx) \cos^2(\pi \gamma_S(\bx)/2)$ are the bulk moduli for P- and S-wave, respectively. $_{C}D_t^{\beta} $ is the Caputo fractional derivative operator defined as 
\begin{equation}\label{def.extended_Caputo_time_fractional_operator}
{_C}D_t^{\beta} \varepsilon(\bx, t) = 
\frac{1}{\Gamma(1 - \beta)} \int_0^t (t - \tau)^{-\beta} \left[\frac{\partial}{\partial \tau} \varepsilon(\bx, \tau)\right] \D \tau,  \quad 0 < \beta < 1. \end{equation}
The variable-order Caputo fractional derivative operators $ {_C}D_t^{2\gamma_P(\bx)}$ and $ {_C}D_t^{2\gamma_S(\bx)}$ characterize the anisotropic attenuation in inhomogeneous materials, e.g., the multi-layer viscoelastic media.

When $c_P=c_S$ and $\gamma_P=\gamma_S$, \cref{conservation_momentum} --\cref{stress_strain_relation}
can be reduced via the Helmholtz decomposition
and the P-wave propagation can be described by a scalar viscoacoustic wave equation \cite{ChapmanHobroRobertsson2014}
\begin{equation}
\rho(\bx) \frac{\partial^2}{\partial t^2} u(\bx, t) = \nabla \cdot  (\rho(\bx) C_P(\bx) {_C D_t^{2\gamma_P(\bx)}} \nabla u(\bx, t)) + f(\bx, t)
\label{pwaveequation}
\end{equation}
with $C_P(\bx) = c_P^2(\bx)\cos^2(\pi \gamma_P(\bx)/2) \omega_0^{-2\gamma_P(\bx)}$.
In the case of $ \rho(\bx)  \equiv \rho_0$, $C_P(\bx) \equiv C_P$, $\gamma_P(\bx) \equiv \gamma_P$ and $f  = 0$,  \cref{pwaveequation}
becomes
\begin{equation}
  _C D_t^{2-2\gamma_P} u(\bx, t) = C_P \Delta u(\bx, t),
  \label{constantpwave}
\end{equation}
where the Caputo derivative (with order $1<\beta <2$) is defined as 
\begin{equation}\label{caputodef2}
{_C}D_t^{\beta} u(\bx, t) = \frac{1}{\Gamma(2 - \beta)} \int_0^t (t - \tau)^{1-\beta} \left[\frac{\partial^2}{\partial \tau^2} u(\bx, \tau)\right] \D \tau,  \quad 1 < \beta < 2.
\end{equation}

When \cref{constantpwave} is combined with the initial data $u(\bx, 0^+) = u_0(\bx)$ and $\frac{\partial }{\partial t} u(\bx, 0^+) = 0$, the solution to this initial
value problem (IVP) can be expressed as follows:
\begin{equation}\label{exact_solution}
u(\bx, t) =  \int_{\mathbb{R}^3} G^{(3)}(\by, t; 1-\gamma_P) u_0(\bx - \by) \D \by,
\end{equation}
where $G^{(3)}(\bx, t; 1-\gamma_P)$ is the Green's function of the given IVP (see \cref{Green} for its explicit expression in terms of the derivatives of
Mainardi functions).
The Green's function $G^{(3)}$ with different quality factors $Q_P$ (or equivalently, $\gamma_P$) is visualized in \cref{Green_function}. We observe that Green's functions for time-fractional wave equation are oscillatory, bounded from below and above, and approach the derivative of the Heaviside function as $\gamma_P \to 0$ ($Q_P \to \infty$). Moreover, the power function relaxation in the fractional stress-strain relation not only leads to amplitude loss but also contributes to phase dispersion.
It may even change the first arrival time of the seismic signals
\cite{CarcioneKosloffKosloff1988}. The smoothness of the wavefield at the wave fronts is an important consequence of singular memory, implying that the signals build up with some delay after the passage of the wave front. Therefore, it necessities a correction in the travel time in view of the dependence of the signal delay on the propagation distance \cite{HanyaSerdynska2003}.

\begin{figure}[!ht]
    \centering
    \subfigure[Derivatives of the Mainardi functions.]{{\includegraphics[width=0.49\textwidth,height=0.26\textwidth]{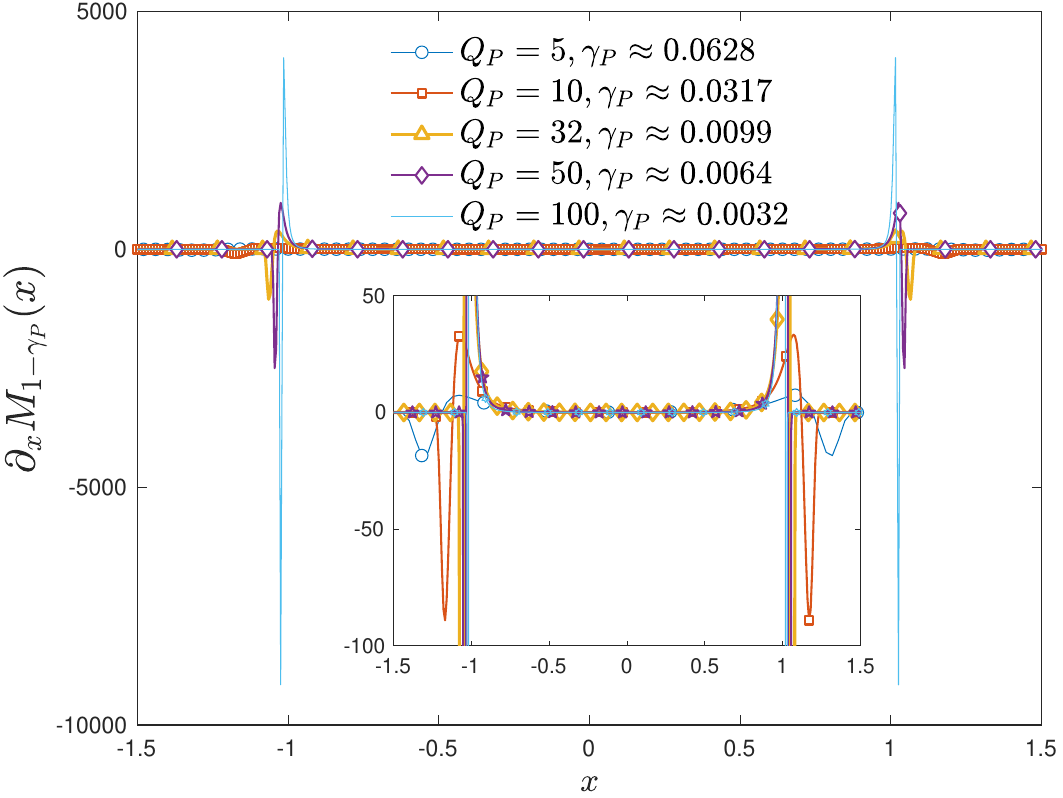}}}
    \subfigure[Green's functions.]{{\includegraphics[width=0.49\textwidth,height=0.26\textwidth]{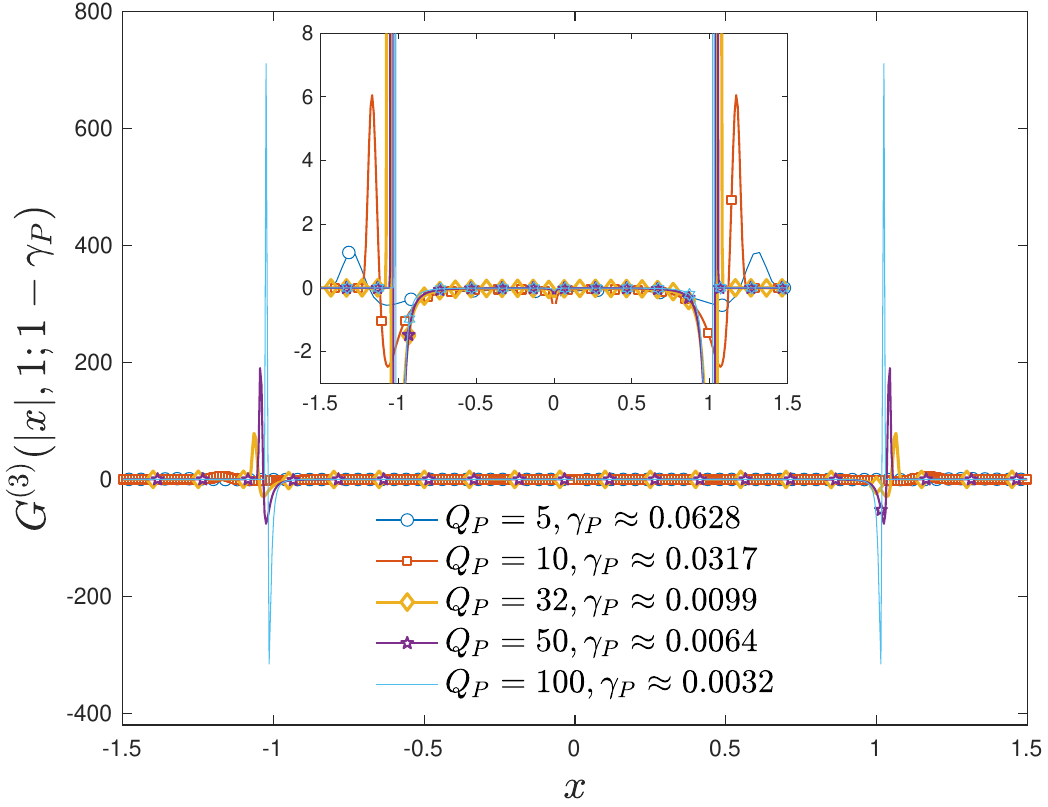}}}
    \caption{\small The derivatives of the Mainardi functions and Green's functions for 3-D constant-Q wave equation under different quality factors $Q_P$.
      The power function relaxation in the fractional constitutive relation changes both amplitude loss and phase dispersion.
    \label{Green_function}}
\end{figure} 

\section{Short-memory scheme for constant-Q wave equation}
\label{sec.scheme}

The major difficulty to numerically solve the time-fractional viscoelastic wave equation, as pointed out in \cite{Carcione2009,Zhu2017}, lies in the necessity of storing the whole history.  A potential way to alleviate such problem is to utilize an alternative form of the fractional Caputo derivative and the reciprocal relation $t^{-2\gamma} \to s^{2\gamma-1}$, then truncate the time-reciprocal variable $s$ instead. For $\gamma \ll 1$, SOE approximations can dramatically reduce the number of memory variables as $s^{2\gamma-1}$ is strongly localized. In the following, we propose a new SOE approximation and demonstrate its equivalence to GMB.  

\subsection{Exponential splitting scheme}
\label{sec.differential_form}
To simplify the notation, we will use the wave equation in one dimension to illustrate the key ideas.          
Introduce the velocity $v(x, t) = \frac{\partial }{\partial t} u(x, t)$. In one dimension, \cref{conservation_momentum}--\cref{stress_strain_relation}
are reduced to
\begin{equation}\label{1d_wave}
\left\{
\begin{split}
&\frac{\partial }{\partial t} v(x, t) = \frac{1}{\rho(x)} \frac{\partial }{\partial x} \sigma(x, t) +  \frac{1}{\rho(x)} f(x), \\
&\frac{\partial }{\partial t} \sigma(x, t) = \rho(x) C_P(x)  {_C}D_t^{2\gamma_P(x)} \frac{\partial }{\partial x} v(x, t),
\end{split}
\right.
\end{equation}
where we have used the following relationship between the strain and the velocity
\begin{equation}\label{strainvelocity}
\frac{\partial}{\partial t} \varepsilon(x, t) = \frac{\partial }{\partial x} v(x, t).
\end{equation}

Note that the power function in the Caputo fractional derivative has the integral representation
\begin{equation}\label{powerfunctionrep}
  \frac{1}{t^\beta} = \frac{1}{\Gamma(\beta)}\int_0^\infty e^{-t s}s^{\beta-1}\D s.
\end{equation}

Combining \cref{caputodef2} and \cref{1d_wave}--\cref{powerfunctionrep}, one has the stress-strain relation
\begin{equation}\label{extension_stress_strain}
\begin{aligned}
  \sigma(x, t) &= \frac{ \rho(x)  C_P(x)}{\Gamma(1-2\gamma_P(x))\Gamma(2\gamma_P(x))}  \int_0^{\infty} s^{2\gamma_P(x) - 1}  \left\{\int_0^t e^{-s (t - \tau)} \left[\frac{\partial}{\partial \tau} \varepsilon(x, \tau)\right] \D \tau\right\} \D s\\
  &= \frac{ \rho(x)  C_P(x)}{\Gamma(1-2\gamma_P(x))\Gamma(2\gamma_P(x))}  \int_0^{\infty} s^{2\gamma_P(x) - 1} \Phi(x,s,t)\D s,
\end{aligned}
\end{equation}
where the auxiliary function $\Phi$ is defined by the formula
\begin{equation}
\begin{split}
&\Phi(x, s, t) =  \int_{0}^{t} e^{- s(t - \tau) } \left[\frac{\partial}{\partial \tau} \varepsilon(x, \tau)\right] \D \tau =  \int_{0}^{t} e^{- s(t - \tau) } \left[\frac{\partial}{\partial x} v(x, \tau)\right] \D \tau.
\end{split}
\end{equation}
It is easy to see that the auxiliary function $\Phi$ satisfies the differential equation 
\begin{equation}\label{stiff_equation}
\frac{\partial }{\partial t} \Phi(x, s, t) = -s \Phi(x, s, t) + \frac{\partial }{\partial x} v(x, t)
\end{equation}
with the initial condition
\begin{equation}\label{initialcondition}
\Phi(x, s, t=0^+) = \frac{\Gamma(1-2\gamma_P(x))}{ \rho(x) C_P(x)} e^{-s} \sigma(x, t=0^+).
\end{equation}
Here, we would like to remark that the initial condition \cref{initialcondition} matches the asymptotic behaviors of the power function
(see \cite{XiongGuo2022} for details). 

Suppose now that the power function admits an efficient SOE approximation
\begin{equation}\label{powersoeappr}
  \frac{1}{t^{2\gamma_P(x)}} \approx \sum_{j=1}^{N_{\rm exp}}w_j e^{-s_jt}.
\end{equation}

\begin{remark}
We would like to remark that the nodes $s_j$ and weights $w_j$ in the SOE approximation \cref{powersoeappr}
may depend on the spatial variable $x$ since $\gamma_P$ is a function of $x$. Analyzing the case where $\gamma_P(x)$
is an arbitrary function of $x$ can be challenging. Fortunately, in many geophysical models, $\gamma_P(x)$ is a piecewise
constant function, which corresponds to layered media. See \cref{sec.3d.layeredmedia}
for a numerical example of this case.
\end{remark}

Combining  \cref{powerfunctionrep}, \cref{extension_stress_strain}, and \cref{powersoeappr}, we obtain 
\begin{equation}\label{fractional_stress_strain}
\begin{aligned}  
  \sigma(x, t)&\approx  \frac{\rho(x)  C_P(x)}{\Gamma(1-2\gamma_P(x))}
  \sum_{j=1}^{N_{\textup{exp}}} w_j \underbrace{\int_0^t e^{-s_j \tau}\frac{\partial}{\partial t} \varepsilon(\bx, t - \tau) \D \tau}_{\textup{memory variables}}\\
  &=  \frac{\rho(x)  C_P(x)}{\Gamma(1-2\gamma_P(x))}
  \sum_{j=1}^{N_{\textup{exp}}} w_j   \Phi(x, s_j, t).
\end{aligned}  
\end{equation}
And the fractional derivative is removed from \cref{1d_wave}, which is reduced to the following system of differential equations
\begin{equation}\label{1d_differential_form}
\left\{
\begin{split}
&\frac{\partial }{\partial t} v(x, t) = \frac{1}{\rho(x)} \frac{\partial }{\partial x} \sigma(x, t) +  \frac{1}{\rho(x)} f(x), \\
  & \sigma(x, t)  =  \frac{\rho(x)  C_P(x)}{\Gamma(1-2\gamma_P(x))}
  \sum_{j=1}^{N_{\textup{exp}}} w_j   \Phi(x, s_j, t),  \\
& \frac{\partial }{\partial t} \Phi(x, s_j, t) = -s_j \Phi(x, s_j, t) + \frac{\partial }{\partial x} v(x, t), \quad j = 1, \dots, \nexp.
\end{split}
\right.
\end{equation}

It seems that the stiff terms (i.e., the terms with large values of $s_j$) in \cref{1d_differential_form} may impose a severe stability constraint
on the time step size $ \Delta t$ \cite{Diethelm2008}. However, they can be integrated stably by an exponential operator splitting scheme,
where the velocity and stress components are updated alternatingly \cite{XiongGuo2022}. 
Indeed, for a small interval $[t_n, t_{n+1}]$ with $t_{n+1} - t_n = \Delta t$,  the stress components can be assumed to be invariant when updating the velocity components. In the meantime,  the velocity components can be assumed to be invariant when updating the stress components, so that the exact solutions of the dynamics of rigid bodies \eqref{dynamics_phi_trace} can be utilized. In this way, the constraint on the time step is removed as the exact flows of rigid dynamics are exploited.
In practice, we use the following second-order exponential Strang splitting scheme.
\begin{equation}\label{1d_exponential_splitting}
\left\{
\begin{split}
&v(x, t_{n+\frac{1}{2}}) = v(x, t_{n}) + \frac{\Delta t}{2} \frac{1}{\rho(x)} \frac{\partial}{\partial x} \sigma(x, t_n) +  \int_{t_n}^{t_n +\frac{\Delta t}{2}} \frac{f(x, s)}{\rho(x)} \D s.  \\
&\Phi(x, s_j , t_{n+1}) = e^{- s_j \Delta t} \Phi(x, s_j , t_{n}) + \frac{1}{s_j}(1 - e^{-s_j \Delta t})  \frac{\partial}{\partial x}v(x, t_{n+\frac{1}{2}}), \quad j = 1, \dots, \nexp,\\
& \sigma(x, t_{n+1}) = \frac{\rho(x)  C_P(x)}{\Gamma(1-2\gamma_P(x))} \sum_{j=1}^{\nexp} w_j   \Phi(x, s_j, t_{n+1}), \\
&v(x, t_{n+1}) = v(x, t_{n+\frac{1}{2}}) + \frac{\Delta t}{2}\frac{1}{ \rho(x)} \frac{\partial}{\partial x} \sigma(x, t_{n+1}) +  \int_{t_{n+\frac{1}{2}}}^{t_{n+\frac{1}{2}} +\frac{\Delta t}{2}} \frac{f(x, s)}{\rho(x)} \D s.
\end{split}
\right.
\end{equation}

\subsection{Nearly optimal SOE approximations for the power function with small exponent}
\label{sec.setting_new_SOE}

\label{sec.SOE}
We now construct an efficient SOE approximation of the power function $t^{-\beta}$ when the exponent $\beta$ is small (and possibly close to zero).
That is,
we try to find nodes $s_j$ and weights $w_j$ ($j=1,\ldots,\nexp$) such that
\be\label{soeappr}
  \left|{t^{-\beta}}-\sum_{j=1}^{\nexp} w_j e^{-s_j t}\right| \leq
  \varepsilon {t^{-\beta}}, \quad t\in[\delta, T],
\ee
for exponent $0<\beta\le 1$ within a prescribed (relative) error tolerance $\varepsilon$, a small gap $\delta$ and final time $T$ \cite{BeylkinMonzon2010,JiangZhangZhangZhang2017,JRLi2010}. In general, we can set $\delta = \Delta t$.

When the exponent $\beta$ is very small, the power function becomes flatter and approaches the
constant function away from the origin. Intuitively, one expects that the SOE approximation would need fewer number of exponentials as the exponent decreases. However, existing methods for constructing the SOE approximation of the power function suffer from stagnation when $\beta$ approaches zero because the starting integral representation of the existing methods has a low-exponent breakdown. The reason is that many practitioners \emph{incorrectly} conclude that $\nexp$ will grow as $\beta\rightarrow 0$, which is not in agreement with the observation that the power function $t^{-\beta}$ becomes closer and closer to a constant function as $\beta\rightarrow 0$. Partly, this incorrect conclusion stems from the $1/\beta$ and $\log \beta$ factors
in Theorem 5 of \cite{BeylkinMonzon2010}, which shows that $\nexp$ will grow rapidly as $\beta\rightarrow 0$.

In~\cite{JiangZhangZhangZhang2017,JRLi2010}, the construction of the SOE approximations for the
general power function starts from the integral representation \eqref{powerfunctionrep}. 
It then truncates the infinite integral in \eqref{powerfunctionrep} according to $\delta$ and $\varepsilon$,
divides the resulting finite interval into dyadic subintervals towards the origin, and
approximates the integral on each subinterval by the Gauss-Legendre quadrature of a proper order.
In \cite{BeylkinMonzon2010}, a further
change of variable is used to obtain another integral representation of the power function
\be\label{intrep2}
\frac{1}{t^\beta} = \frac{1}{\Gamma(\beta)}\int_{-\infty}^\infty e^{-te^x+\beta x}\D x.
\ee
The infinite integral in \eqref{intrep2} is then discretized via the truncated trapezoidal rule.

The construction in~\cite{JiangZhangZhangZhang2017} tries to bound the absolute error of the SOE
approximations in order to prove the stability of the overall numerical scheme for solving
fractional PDEs. In practice, one observes that SOE approximations with controlled relative
error are sufficient, even though the bound on relative error is actually weaker
than the absolute error for $t\in [\delta, 1]$. The analysis in \cite{BeylkinMonzon2010} 
provides an estimate on $\nexp$ based on relative error. However, because of additional
change of variable introduced in \eqref{intrep2}, the truncated trapezoidal rule
used in \cite{BeylkinMonzon2010} leads to many small exponentials. And the resulting estimate
on $\nexp$ blows up as $\beta \rightarrow 0$. We would like to remark here that in
\cite{BeylkinMonzon2010} a modified Prony's method is applied to reduce the number of
small exponentials. Thus, the algorithms in \cite{BeylkinMonzon2010} can be used to construct
the SOE approximations for the power function with very small $\beta$. The number of exponentials
does not blow up as $\beta\rightarrow 0$, though it is slightly inefficient.

When the exponent $\beta$ is very close to zero, the integrand in \eqref{powerfunctionrep} is nearly
singular due to the factor $s^{\beta-1}$. Simultaneously, $\Gamma(\beta)$ tends to infinity as
$\beta\rightarrow 0$. In fact, the estimate on $\nexp$ in \cite[Eq. (22)]{BeylkinMonzon2010}
contains the factor ${1}/{\beta}$, which blows up as $\beta\rightarrow 0$. Nevertherless,
the algorithm in \cite{BeylkinMonzon2010} can still be used to obtain efficient SOE approximations
for the power function in this case. The reason is that Prony's method is able to reduce the number
of small exponentials significantly, effectively removing the factor ${1}/{\beta}$ in the
complexity estimate of $\nexp$.

\subsubsection{Improved bound on $\nexp$ for $\beta > 0$}

We now refine the analysis in \cite{JiangZhangZhangZhang2017}
to give an improved estimate on $\nexp$ when one is concerned with relative error. 
By simple scaling $\tilde \delta = \delta/T$, we may consider the same approximation problem as in \cite{BeylkinMonzon2010}:
\be\label{soeappr2}
  \Big |\frac{1}{t^{\beta}}-\sum_{j=1}^{\nexp} w_j e^{-s_j t}\Big | \leq
  \varepsilon\frac{1}{t^{\beta}}, \quad t\in[\tilde \delta, 1].
\ee
The original approximation problem~ \eqref{soeappr} is identical to \eqref{soeappr2} with
$\tilde \delta=\delta/T$.

Using the property $\Gamma(1+x) = x\Gamma(x)$, we have 
\be\label{intrep4}
\frac{1}{t^\beta} = \frac{1}{\Gamma(\beta)}\int_0^\infty e^{-t s}s^{\beta-1}\D s
=\frac{\beta}{\Gamma(1+\beta)}\int_0^\infty e^{-t s}s^{\beta-1}\D s, 
\ee
We now decompose the integral on the right side of \cref{intrep4} into
three parts:
\begin{equation*}
\int_0^\infty e^{-t s}s^{\beta-1}\D s =  \underbrace{\int_{0}^{1} e^{-ts}s^{\beta-1} \D s}_{\textup{weakly singular}}  +  \underbrace{\sum_{j = 0}^{N-1}\int_{2^j}^{2^{j+1}} e^{-ts}s^{\beta-1} \D s}_{\textup{non-singular}} + \underbrace{\int_{2^N}^{\infty} e^{-t s} s^{\beta-1}\D s}_{\textup{truncation}}.
 \end{equation*}
The weakly singular part can be approximated by the Gauss-Jacobi quadrature, and the non-singular part on $[2^j,2^{j+1}]$ ($j=0,\ldots,N-1$) can be approximated by the Gauss-Legendre
quadrature. The truncation term can be ignored since it becomes sufficiently small when $N$ is large.

\begin{lemma}[truncation part]\label{soelem1}
  Suppose that $p_0 \tilde \delta >1$. Then
  for $t\geq \tilde \delta >0$,
  \be
  \frac{t^{\beta}}{\Gamma(\beta)}\int_{p_0}^\infty e^{-ts}s^{\beta-1} \D s
  \leq \frac{\beta}{\Gamma(1+\beta)}e^{- \tilde \delta p_0}.
  \ee
\end{lemma}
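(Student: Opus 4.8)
The plan is to reduce everything to a single elementary tail estimate, using the only two facts available about the exponent: that $0<\beta\le 1$ (so that $\beta-1\le 0$) and the standing hypothesis $p_0\tilde\delta>1$. First I would rewrite the target right-hand side with the functional equation $\Gamma(1+\beta)=\beta\Gamma(\beta)$, which collapses the prefactor to $\beta/\Gamma(1+\beta)=1/\Gamma(\beta)$. After canceling the common factor $1/\Gamma(\beta)$, the claim becomes the cleaner statement
\be
t^{\beta}\int_{p_0}^\infty e^{-ts}s^{\beta-1}\,\D s \le e^{-\tilde\delta p_0},\qquad t\ge\tilde\delta,
\ee
and $\Gamma(\beta)$ plays no further role.

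The key step is to exploit that $s\mapsto s^{\beta-1}$ is nonincreasing on $[p_0,\infty)$ because $\beta-1\le 0$. Hence on the domain of integration $s^{\beta-1}\le p_0^{\beta-1}$, and pulling this constant out leaves a pure exponential integral:
\be
\int_{p_0}^\infty e^{-ts}s^{\beta-1}\,\D s \le p_0^{\beta-1}\int_{p_0}^\infty e^{-ts}\,\D s = p_0^{\beta-1}\,\frac{e^{-tp_0}}{t}.
\ee
Multiplying by $t^{\beta}$ and regrouping the powers of $t$ and $p_0$ bounds the left-hand side above by $(tp_0)^{\beta-1}e^{-tp_0}$.

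Finally I would close the argument with the two hypotheses. Since $t\ge\tilde\delta$ and $p_0>0$, we have $tp_0\ge\tilde\delta p_0>1$ by assumption; combined with $\beta-1\le 0$ this forces $(tp_0)^{\beta-1}\le 1$. Moreover $e^{-tp_0}\le e^{-\tilde\delta p_0}$, again because $t\ge\tilde\delta$. Chaining these gives $(tp_0)^{\beta-1}e^{-tp_0}\le e^{-\tilde\delta p_0}$, which is exactly the reduced claim; reinstating the $1/\Gamma(\beta)=\beta/\Gamma(1+\beta)$ prefactor recovers the statement of the lemma.

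I do not anticipate a genuine obstacle: this is a short monotonicity-plus-exponential-tail bound. The one point requiring care is the sign bookkeeping for $\beta-1$, since every inequality involving $s^{\beta-1}$ or $(tp_0)^{\beta-1}$ reverses relative to the $\beta>1$ intuition; one must consistently use that $u\mapsto u^{\beta-1}$ is decreasing and that values $u>1$ raised to a nonpositive power stay below $1$. An alternative route is to substitute $u=ts$ and identify the left-hand side as the normalized upper incomplete gamma function $\Gamma(\beta,tp_0)/\Gamma(\beta)$, then bound $\Gamma(\beta,tp_0)$ in the same way; this is equivalent but introduces a special function unnecessarily, so I would prefer the direct estimate above.
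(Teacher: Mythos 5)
Your proof is correct and is essentially the paper's argument in a slightly different order: the paper substitutes $x=t(s-p_0)$ to write the integral as $e^{-tp_0}\int_0^\infty e^{-x}(x+tp_0)^{\beta-1}\,\D x$ and then uses $tp_0>1$ with $\beta-1\le 0$ to bound the integrand by $e^{-x}$, whereas you bound $s^{\beta-1}\le p_0^{\beta-1}$ first and integrate the exponential exactly. Both routes reduce to the same two facts, $(tp_0)^{\beta-1}\le 1$ and $e^{-tp_0}\le e^{-\tilde\delta p_0}$, so this matches the paper's proof.
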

\begin{proof}
It is readily verified that
\begin{equation}
\begin{aligned}
\frac{t^\beta}{\Gamma(\beta)}\int_{p_0}^\infty e^{-ts}s^{\beta-1} \D s
&= \frac{1}{\Gamma(\beta)}
e^{-tp_0}\int_0^\infty e^{-x}(x+tp_0)^{\beta-1} \D x \leq e^{-\tilde \delta p_0}\frac{\beta}{\Gamma(1+\beta)}.
\end{aligned}
\end{equation}
\end{proof}

\begin{lemma}[non-singular part]\label{soelem2}
  For a dyadic interval $[a, b] = [2^j, 2^{j+1}]$, let $s_1,\dots, s_n$ and
  $w_1,\dots,w_n$ be the nodes and weights for $n$-point Gauss-Legendre
  quadrature on the interval. Then for $\beta > 0$ and $n\ge 8$,
  \begin{equation}
    \frac{t^\beta}{\Gamma(\beta)}
    \left|\int_a^b e^{-ts}s^{\beta-1}ds-\sum_{k=1}^n w_ks_k^{\beta-1}e^{-s_k t}\right|
    <\frac{\beta}{\Gamma(1+\beta)} 2^{ \frac{5}{2}}\pi  (2n)^\beta \left(\frac{1}{2}\right)^{2n}.
 \end{equation}

\end{lemma}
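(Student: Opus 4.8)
The plan is to recognize the sum as the $n$-point Gauss--Legendre quadrature applied to the analytic integrand $f(s)=e^{-ts}s^{\beta-1}$ on $[a,b]=[2^j,2^{j+1}]$, and to estimate its error through the classical Bernstein-ellipse bound. First I would pass to the reference interval by the affine map $s=\frac{a+b}{2}+\frac{b-a}{2}\xi=2^{j-1}(3+\xi)$, $\xi\in[-1,1]$, which carries the integral to $2^{j-1}\int_{-1}^1 g(\xi)\,\D\xi$ with $g(\xi)=f(2^{j-1}(3+\xi))$ and introduces the Jacobian $2^{j-1}$ in front of the quadrature error. The decisive structural fact is that, because the interval is \emph{dyadic}, the only singularity of $g$ --- the branch point of $s^{\beta-1}$ at $s=0$ --- always sits at $\xi=-3$, independently of $j$, with branch cut on $\xi\le -3$.

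Next I would invoke analyticity: $g$ is holomorphic in and on the Bernstein ellipse $E_\rho$ with foci $\pm 1$ and parameter $\rho$ as long as $E_\rho$ avoids $\xi=-3$. The leftmost point of $E_\rho$ is $-\tfrac12(\rho+\rho^{-1})$, so any $\rho$ with $\rho+\rho^{-1}<6$ is admissible; I would fix $\rho=2$, whose leftmost point is $-5/4>-3$, leaving a comfortable margin and, crucially, producing the factor $\rho^{-2n}=(1/2)^{2n}$ in the stated bound. The standard Gauss--Legendre error estimate for functions analytic on $E_\rho$ then gives $\bigl|\int_{-1}^1 g\,\D\xi-\sum_k w_k g(\xi_k)\bigr|\le C_\rho\,M(\rho)\,\rho^{-2n}$ with $M(\rho)=\max_{\xi\in E_2}|g(\xi)|$ and a modest absolute constant $C_\rho$; I would use the contour-integral (resolvent-kernel) form of this estimate, from which the $\pi$ and the $2^{5/2}$ in the target constant emerge.

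The heart of the argument is a uniform bound on $M(\rho)$. Writing $s=2^{j-1}(3+\xi)$ and using $|e^{-ts}|=e^{-t\,\mathrm{Re}(s)}$ and $|s^{\beta-1}|=|s|^{\beta-1}$, I would observe that on $E_2$ the leftmost point (where $s=\tfrac78 a$ is real and positive) simultaneously minimizes $\mathrm{Re}(s)$ and $|s|$; hence for $0<\beta\le 1$ it maximizes both $e^{-t\,\mathrm{Re}(s)}$ and $|s|^{\beta-1}$, so the maximum-modulus principle yields $M(\rho)=e^{-(7/8)at}\big(\tfrac78 a\big)^{\beta-1}$. Combining this with the prefactor $\tfrac{t^\beta}{\Gamma(\beta)}=\tfrac{\beta}{\Gamma(1+\beta)}t^\beta$ and the Jacobian $2^{j-1}=a/2$ collapses the whole $t$- and $j$-dependence into $\tfrac12(7/8)^{\beta-1}(at)^\beta e^{-(7/8)at}$; setting $w=at$ and maximizing $w^\beta e^{-(7/8)w}$ over $w>0$ shows this is bounded by an absolute constant (at most $4/7$ for $\beta\in(0,1]$). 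Multiplying by $C_\rho$ and by the trivial factor $(2n)^\beta\ge 1$ then delivers the claimed bound with ample room.

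I anticipate two delicate points. The first is making the Gauss--Legendre analytic error estimate fully explicit with honest constants --- this is where the precise $2^{5/2}\pi$ and the requirement $n\ge 8$ enter, and one must check that the $n$-dependent corrections in the kernel bound are absorbed for $n\ge 8$. The second, and the genuine obstacle, is the maximum-modulus step: I have argued it cleanly for $0<\beta\le 1$, where the leftmost point is simultaneously optimal for both factors; for the stated range $\beta>0$ (in particular $\beta>1$, where $|s|^{\beta-1}$ instead pushes the maximizer toward the rightmost point and competes with the exponential decay) the location of the extremum must be re-examined, and it is precisely here that the extra factor $(2n)^\beta$ buys the slack needed for a uniform statement.
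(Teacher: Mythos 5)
Your proposal is essentially correct for the range $0<\beta\le 1$ that is actually used in the paper, but it takes a genuinely different route. The paper does not re-derive the Gauss--Legendre error at all: it imports the explicit bound
\begin{equation*}
\frac{t^\beta}{\Gamma(\beta)}\Big|\int_a^b e^{-ts}s^{\beta-1}\,\D s-\sum_{k=1}^n w_ks_k^{\beta-1}e^{-s_k t}\Big|
\le \frac{1}{\Gamma(\beta)}\,2^{5/2}\pi\,(at)^\beta e^{-at}\Big(\frac{eat}{8n}+\frac{1}{4}\Big)^{2n}
\end{equation*}
from Eq.~(2.18) of \cite{JiangZhangZhangZhang2017}, and the entire content of its proof is an elementary single-variable optimization of $f(x)=x^\beta e^{-x}\big(\tfrac{ex}{8n}+\tfrac14\big)^{2n}$: locating the maximizer $x_\ast$, sandwiching it between $2(1-\tfrac1e)n$ and $4+2(1-\tfrac1e)n$, and using $\tfrac{e^{1/e}}{2n}+\tfrac{e^{1/e}}{4}<\tfrac12$ --- this is exactly where the hypothesis $n\ge 8$ and the factor $(2n)^\beta$ come from. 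Your Bernstein-ellipse argument instead exploits the dyadic geometry directly (the branch point always maps to $\xi=-3$, so $\rho=2$ is admissible uniformly in $j$), which makes the factor $(1/2)^{2n}$ appear for structural reasons rather than as the outcome of an optimization, dispenses with the $n\ge 8$ restriction, and is self-contained; the price is that you must commit to an explicit constant in the analytic quadrature error bound (e.g.\ the classical $\tfrac{64}{15(1-\rho^{-2})}M\rho^{-2n}$, which at $\rho=2$ gives roughly $5.7M\cdot 4^{-n}$, comfortably below $2^{5/2}\pi$), whereas the paper inherits its constant ready-made. Your worry about $\beta>1$ is legitimate but applies equally to the paper's own proof: the step $x_\ast<\beta+2(1-\tfrac1e)n+\tfrac{\beta}{e-1}<4+2(1-\tfrac1e)n$ silently requires $\beta<4(e-1)/e\approx 2.53$, so neither argument actually covers all $\beta>0$ as literally stated; in context $\beta=2\gamma\in(0,1]$ and both proofs are fine there.
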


\begin{proof}
  By Eq. (2.18) in \cite{JiangZhangZhangZhang2017}, we have
 \begin{equation*}
 \frac{t^\beta}{\Gamma(\beta)}
 \left|\int_a^b e^{-ts}s^{\beta-1}ds-\sum_{k=1}^n w_ks_k^{\beta-1}e^{-s_k t}\right|
 \leq \frac{1}{\Gamma(\beta)}
 2^{\frac{5}{2}}\pi (at)^\beta e^{-at}\left(\frac{eat}{8n}+\frac{1}{4}\right)^{2n}.
\end{equation*}
Consider the function
 \begin{equation*}
f(x)=x^\beta e^{-x}\left(\frac{ex}{8n}+\frac{1}{4}\right)^{2n}.
\end{equation*}
Straightforward calculation shows that for $x>0$, $f$ achieves its maximum
at
 \begin{equation*}
x_\ast=\frac{\beta}{2}+\left(1-\frac{1}{e}\right)n
+\sqrt{\left(\frac{\beta}{2}+\left(1-\frac{1}{e}\right)n\right)^2+\frac{2n}{e}\beta}.
\end{equation*}
It is easy to see that $x_\ast>2\left(1-\frac{1}{e}\right)n$ and
 \begin{equation*}
\ba
x_\ast&<\beta+2\left(1-\frac{1}{e}\right)n+\frac{1}{2}\frac{2n}{e}\beta
\frac{1}{\frac{\beta}{2}+\left(1-\frac{1}{e}\right)n}\\
&<\beta+2\left(1-\frac{1}{e}\right)n+\frac{\beta}{e-1} <4+2\left(1-\frac{1}{e}\right)n.
\ea
\end{equation*}
Thus it yields that
 \begin{equation*}
\ba
\max_{x>0} f(x) &= f(x_\ast)=x_\ast^\beta e^{-x_\ast}\left(\frac{ex_\ast}{8n}+\frac{1}{4}\right)^{2n}\\
&<\left(4+2\left(1-\frac{1}{e}\right)n\right)^\beta
e^{-2\left(1-\frac{1}{e}\right)n}
\left(\frac{e}{8n}\left(4+2\left(1-\frac{1}{e}\right)n\right)
+\frac{1}{4}\right)^{2n}\\
&=\left(4+2\left(1-\frac{1}{e}\right)n\right)^\beta
\left(\frac{e^{1/e}}{2n}
+\frac{e^{1/e}}{4}\right)^{2n} <(2n)^\beta\left(\frac{1}{2}\right)^{2n},
\ea
\end{equation*}
where the last inequality follows from
$4+2\left(1-\frac{1}{e}\right)n<2n$ and 
$\frac{e^{1/e}}{2n}+\frac{e^{1/e}}{4}<\frac{1}{2}$ for $n\ge 8$. And the lemma follows.
\end{proof}

For the weakly singular part, the following bound slightly improves the estimate in \cite{JiangZhangZhangZhang2017}.
\begin{lemma}[weakly singular part]\label{soelem3}
  Let $s_1,\dots, s_n$ and
  $w_1,\dots,w_n$ ($n\geq 2$) be the nodes and weights for $n$-point Gauss-Jacobi
  quadrature with the weight function $s^{\beta-1}$
  on the interval. Then for $t\in [\tilde \delta,1]$, $\beta > 0$ and $n>1$,
    \begin{equation}\label{error_singular}
\frac{t^\beta}{\Gamma(\beta)}\left|\int_0^1 e^{-ts}s^{\beta-1}ds-\sum_{k=1}^n w_ke^{-s_k t}\right|
    < \frac{\beta}{\Gamma(1+\beta)}  \frac{2n+1}{2n+\beta} \sqrt{\frac{\pi}{n}}\left(\frac{e}{8n}\right)^{2n}.
 \end{equation}
\end{lemma}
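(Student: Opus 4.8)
The plan is to begin from the classical error representation for Gaussian quadrature rules. Since the $s_k,w_k$ are the nodes and weights of the $n$-point Gauss--Jacobi rule for the weight $s^{\beta-1}$ on $[0,1]$, which integrates polynomials of degree at most $2n-1$ exactly against this weight, the standard orthogonal-polynomial error formula applied to $f(s)=e^{-ts}$ gives
\[
\int_0^1 e^{-ts}s^{\beta-1}\D s-\sum_{k=1}^n w_k e^{-s_k t}=\frac{f^{(2n)}(\xi)}{(2n)!}\int_0^1 \pi_n(s)^2 s^{\beta-1}\D s,
\]
where $\pi_n$ is the monic degree-$n$ orthogonal polynomial for the weight $s^{\beta-1}$ and $\xi\in(0,1)$. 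Because $f^{(2n)}(s)=t^{2n}e^{-ts}$, on $[0,1]$ we have $|f^{(2n)}(\xi)|\le t^{2n}$; multiplying through by $t^{\beta}/\Gamma(\beta)$, using $t\le 1$ so that $t^{2n+\beta}e^{-t\xi}\le 1$ (strictly, since $\xi>0$), and rewriting $1/\Gamma(\beta)=\beta/\Gamma(1+\beta)$ reduces the whole estimate to controlling the weighted norm $\int_0^1 \pi_n^2 s^{\beta-1}\D s$.

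The computational heart of the argument is to evaluate this norm in closed form. I would push $[0,1]$ to $[-1,1]$ via $s=(1+x)/2$, which identifies $\pi_n(s)$, up to the factor $2^{-n}$, with the monic Jacobi polynomial $P_n^{(0,\beta-1)}$. Substituting the classical formulas for the $L^2$ norm and the leading coefficient of $P_n^{(0,\beta-1)}$ and carefully tracking the powers of $2$ through the affine change of variables yields the clean expression
\[
\int_0^1 \pi_n(s)^2 s^{\beta-1}\D s=\frac{(n!)^2\,\Gamma(n+\beta)^2}{(2n+\beta)\,\Gamma(2n+\beta)^2}.
\]
This already produces the factor $1/(2n+\beta)$ of the claimed bound and leaves the ratio $(n!)^2\Gamma(n+\beta)^2/\big((2n)!\,\Gamma(2n+\beta)^2\big)$ to be bounded by $(2n+1)\sqrt{\pi/n}\,\left(\frac{e}{8n}\right)^{2n}$.

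To eliminate the $\beta$-dependence I would observe that $\beta\mapsto\Gamma(n+\beta)/\Gamma(2n+\beta)$ is strictly decreasing, since its logarithmic derivative $\psi(n+\beta)-\psi(2n+\beta)$ is negative; hence its supremum over $\beta>0$ is the limit as $\beta\to0^+$, namely $\Gamma(n)/\Gamma(2n)=(n-1)!/(2n-1)!$. Using $(n-1)!=n!/n$ and $(2n-1)!=(2n)!/(2n)$, this collapses the Gamma factors and reduces the claim to the purely factorial inequality $4(n!)^4/(2n)!^3\le(2n+1)\sqrt{\pi/n}\,\left(\frac{e}{8n}\right)^{2n}$.

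The final step is Stirling's estimate: applying $\sqrt{2\pi}\,n^{n+1/2}e^{-n}\le n!\le\sqrt{2\pi}\,n^{n+1/2}e^{-n+1/(12n)}$ (the upper bound on the factorials in the numerator and the lower bound on $(2n)!$) simplifies $4(n!)^4/(2n)!^3$ to $2\sqrt{\pi n}\,e^{1/(3n)}\left(\frac{e}{8n}\right)^{2n}$, after which it only remains to verify the scalar inequality $2n\,e^{1/(3n)}\le 2n+1$, i.e.\ $e^{1/(3n)}\le 1+1/(2n)$. A one-line monotonicity argument (the function $1+u/2-e^{u/3}$ is nonnegative for $u=1/n$ with $n\ge2$) settles this and matches the hypothesis $n\ge2$. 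I expect the norm evaluation to be the main obstacle: tracking the powers of $2$ and the Gamma factors through the change of variables is exactly where a constant can be lost, whereas the monotonicity in $\beta$ and the Stirling bookkeeping are routine once the closed form is in hand.
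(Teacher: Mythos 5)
Your proposal is correct and follows essentially the same route as the paper: both start from the orthogonal-polynomial error formula for the $n$-point Gauss--Jacobi rule (your closed form $\int_0^1\pi_n^2 s^{\beta-1}\,\D s=\frac{(n!)^2\Gamma(n+\beta)^2}{(2n+\beta)\Gamma(2n+\beta)^2}$ is exactly the constant $\frac{1}{2n+\beta}c_{n,\beta}$ the paper cites), bound $t^{2n+\beta}e^{-t\xi}\le 1$, use the monotonicity of $\Gamma(n+\beta)/\Gamma(2n+\beta)$ in $\beta$ to reduce to the $\beta\to 0^+$ factorial ratio, and finish with a Stirling-type estimate. The only cosmetic difference is in the last step, where the paper invokes Kambo's bound $\frac{2^{4n+1}(n!)^4}{(2n)!(2n+1)!}\le\pi$ plus Stirling for $(2n)!$, while you use the two-sided Stirling inequality directly together with the elementary check $e^{1/(3n)}\le 1+1/(2n)$; both yield the identical constant $\frac{2n+1}{2n+\beta}\sqrt{\pi/n}\,(e/(8n))^{2n}$.
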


\begin{proof}
It starts from the error estimate of the Gauss-Jacobi quadrature,
\begin{equation*}
\left|\int_0^a e^{-s t} s^{\beta-1}  \D s - \sum_{k=1}^{n} w_k e^{-s_k t} \right| \le \frac{a^{2n+\beta}}{2n+\beta} c_{n, \beta}\max_{s\in(0, a)}  |D_s^{2n} (e^{-s t})|, \quad c_{n, \beta} =  \frac{(n!)^2}{(2n)!} \left[\frac{\Gamma(n+\beta)}{\Gamma(2n+\beta)}\right]^2,
\end{equation*}
where $\max_{s\in[0, 1]}  |D_s^{2n} (e^{-s t})| = \max_{s\in[0, 1]} |t^{2n} e^{-s t}| \le 1$.

Now we introduce a strictly monotonically increasing sequence $\{d_n\}$, $d_n \le d_{n+1}$,
\begin{equation*}\label{def_dn}
d_n= \frac{2^{4n+1}(n!)^4}{(2n)!(2n+1)!}, \quad \lim_{n \to \infty} d_n = \pi.
\end{equation*}
From \cite{Kambo1970}, we have $2.66 < d_n \le \pi$ ($n > 1$). 
Thus,
\begin{equation*}
c_{n, \beta} < \frac{(n!)^2}{(2n)!} \left[ \frac{\Gamma(n)}{\Gamma(2n)}\right]^2 = \frac{[n!(n-1)!]^2}{(2n)! \left[(2n-1)!\right]^2} = \frac{d_n (4n^2 + 2n)}{ 2^{4n+1} n^2 (2n-1)!} \le \frac{\pi (4n+2)}{ 2^{4n}(2n)!},
\end{equation*}
where the first inequality utilizes the fact that 
\begin{equation*}
\frac{\Gamma(n + \beta)}{\Gamma(2n+\beta)} = \frac{(n -1 + \beta) \dots (1+\beta) \Gamma(1 + \beta)}{(2n-1+\beta) \dots (1+ \beta) \Gamma(1 + \beta)} = \frac{1}{(2n-1+\beta)\dots(n+\beta)} < \frac{\Gamma(n)}{\Gamma(2n)}.
\end{equation*}
The application of Stirling's approximation $(2n)! > \sqrt{2\pi}{(2n)^{2n+\frac{1}{2}}}e^{-2n}$ leads to
\begin{equation*}
\left|\int_0^1 e^{-s t} s^{\beta-1}  \D s - \sum_{j=1}^{n} w_k e^{-s_j t} \right| < \frac{\pi(4n+2)}{2n + \beta} \frac{1}{(2n)!} \left(\frac{1}{4}\right)^{2n} < \sqrt{\frac{\pi}{n}} \frac{2n+1}{2n+\beta}\left(\frac{e}{8}\right)^{2n} \left(\frac{1}{n}\right)^{2n},
\end{equation*}
which implies the estimate \eqref{error_singular}.
\end{proof}

We are now in a position to combine the above three lemmas to give an
efficient SOE approximation for $t^{-\beta}$
on $[\tilde \delta, 1]$ for $\beta > 0$ as follows.
\begin{theorem}\label{soethm}
Let $0<\tilde \delta\leq t \leq 1$,
let $\varepsilon > 0$ be the desired
precision, let $n_{o}=O(\log\frac{\beta}{\varepsilon}) $,
and let $N=O(\log\log\frac{\beta}{\varepsilon}+\log\frac{1}{\tilde \delta})$.
Furthermore, let ${s_{o,1}, \dots, s_{o,n_{o}}}$ and
${w_{o,1}, \dots, w_{o,n_{o}}}$
be the nodes and weights for the $n_{o}$-point Gauss-Jacobi quadrature
on the interval $[0,1]$,
and let ${s_{j,1}, \dots, s_{j,n_l}}$ and
${w_{j,1}, \dots, w_{j,n_l}}$
be the nodes and weights for $n_l$-point Gauss-Legendre quadrature
on intervals $[2^j,2^{j+1}]$, $j=0,\dots,N-1$, where
$n_l= O\left(\log\frac{\beta}{\varepsilon}+\log\log\frac{1}{\tilde \delta}\right)$.
Then for $t\in [\tilde \delta,1]$ and $\beta > 0$,
\begin{equation}
  \left|\frac{1}{t^\beta} -
  \frac{\beta}{\Gamma(1+\beta)}\left(\sum_{k=1}^{n_{o}} e^{-s_{o,k}t}w_{o,k}
  +\sum_{j=0}^{N-1}\sum_{k=1}^{n_l} e^{-s_{j,k}t}s_{j,k}^{\beta-1}w_{j,k}
  \right)
  \right|
\leq \varepsilon\frac{1}{t^\beta}.
\end{equation}
\end{theorem}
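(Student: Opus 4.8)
The plan is to start from the representation \eqref{intrep4}, split the integral $\int_0^\infty e^{-ts}s^{\beta-1}\D s$ at the truncation point $p_0=2^N$ into the three pieces displayed just before \Cref{soelem1} (weakly singular on $[0,1]$, non-singular on the dyadic intervals $[2^j,2^{j+1}]$, and the tail beyond $2^N$), and then control each piece by the matching lemma. The key structural observation is that the common prefactor $\frac{\beta}{\Gamma(1+\beta)}=\frac{1}{\Gamma(\beta)}$ has already been pulled out in \eqref{intrep4}, so that multiplying the total absolute error by $t^\beta$ turns each quadrature error into exactly the relative quantity $\frac{t^\beta}{\Gamma(\beta)}|\,\cdot\,|$ bounded by the three lemmas. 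It therefore suffices to allocate the budget as $\varepsilon/3$ per piece and to choose $N$, $n_o$, $n_l$ so that each of the three lemma bounds is at most $\varepsilon/3$; summing then yields total relative error $\le\varepsilon$.

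Concretely, for the tail I would invoke \Cref{soelem1}: requiring $2^N>1/\tilde\delta$ (so that the hypothesis $p_0\tilde\delta>1$ holds) together with $\frac{\beta}{\Gamma(1+\beta)}e^{-\tilde\delta 2^N}\le\varepsilon/3$, and using $\frac{\beta}{\Gamma(1+\beta)}\le1$, forces $2^N\gtrsim \tilde\delta^{-1}\log\frac{1}{\varepsilon}$, i.e. $N=O(\log\frac{1}{\tilde\delta}+\log\log\frac{\beta}{\varepsilon})$. For the non-singular part I would use the fact that the bound in \Cref{soelem2}, namely $\frac{\beta}{\Gamma(1+\beta)}2^{5/2}\pi(2n_l)^\beta(\tfrac12)^{2n_l}$, is uniform in $j$; summing over the $N$ intervals multiplies it by $N$, and requiring $N\,2^{5/2}\pi(2n_l)^\beta 2^{-2n_l}\le\varepsilon/3$ and taking logarithms gives $n_l=O(\log\frac{\beta}{\varepsilon}+\log N)=O(\log\frac{\beta}{\varepsilon}+\log\log\frac{1}{\tilde\delta})$. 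For the weakly singular part I would apply \Cref{soelem3}: the algebraic prefactors $\frac{2n_o+1}{2n_o+\beta}\sqrt{\pi/n_o}$ are $O(1)$ while $(e/(8n_o))^{2n_o}$ decays super-exponentially, so demanding the bound be $\le\varepsilon/3$ yields $n_o=O(\log\frac{\beta}{\varepsilon})$. Adding the three $\varepsilon/3$ contributions establishes the stated inequality, with the theorem's nodes and weights being precisely the union of the Gauss-Jacobi rule on $[0,1]$ and the Gauss-Legendre rules on the dyadic intervals, the tail being discarded.

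I do not expect any single estimate to be the obstacle, since the three lemmas already carry the analytic weight; the delicate point is the bookkeeping of orders when combining them. First, I must verify that the number of dyadic intervals $N$ enters the per-interval order $n_l$ only through $\log N$, so that the summation over intervals costs a mere doubly-logarithmic $\log\log\frac{1}{\tilde\delta}$ in $n_l$ rather than an honest factor of $N$; this is exactly what the uniform-in-$j$ form of \Cref{soelem2} makes possible, and it is the reason the total node count stays small. Second, and most important, I must keep the prefactor $\frac{\beta}{\Gamma(1+\beta)}$ intact through every step instead of bounding it away, because $\frac{\beta}{\Gamma(1+\beta)}\le1$ and tends to $0$ as $\beta\to0$; since all three error contributions inherit this factor, the resulting orders do not blow up — indeed they improve — as $\beta\to0$, which is precisely the improvement over the spurious $1/\beta$ growth in the Beylkin–Monz\'on estimate that motivates the refinement.
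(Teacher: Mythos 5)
Your proposal is correct and follows essentially the same route as the paper's proof: the same decomposition of \eqref{intrep4} into tail, dyadic non-singular pieces, and the weakly singular piece on $[0,1]$, controlled respectively by \cref{soelem1}, \cref{soelem2}, and \cref{soelem3}, with the same parameter choices $N=O(\log\log\frac{\beta}{\varepsilon}+\log\frac{1}{\tilde\delta})$, $n_l=O(\log\frac{N\beta}{\varepsilon})$, $n_o=O(\log\frac{\beta}{\varepsilon})$. Your two flagged bookkeeping points (the $\log N$ rather than factor-$N$ cost in $n_l$, and retaining $\frac{\beta}{\Gamma(1+\beta)}$ to preserve the favorable small-$\beta$ behavior) are exactly the features the paper's argument relies on, and your explicit $\varepsilon/3$ budget is if anything slightly tidier than the paper's accounting.
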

\begin{proof}
  By \cref{soelem1}, we may choose 
\begin{equation*}
 p_0 =O\left(\frac{1}{\tilde \delta}\log\frac{\beta}{\varepsilon}\right), \quad N=\lceil \log_2 p_0\rceil = O(\log\log\frac{\beta}{\varepsilon}+ \log\frac{1}{\tilde \delta})
\end{equation*}
  so that
\begin{equation*}
  \ba
 \left|\frac{1}{t^\beta} -\frac{\beta}{\Gamma(1+\beta)}\int_0^{2^N} e^{-ts}s^{\beta-1}\D s\right| 
  & =\frac{\beta}{\Gamma(1+\beta)}\int_{2^N}^\infty e^{-ts}s^{\beta-1}\D s \le   \varepsilon\frac{1}{t^{\beta}}, \quad t\in[\tilde \delta, 1],
  \ea
\end{equation*}
where we have used the fact that $\Gamma(x)>0.88$ for any $x>0$.
  We now split the integration range $[0,2^N]$ into $[0,1]$ and $[2^j,2^{j+1}]$, $j=0,\cdots,N-1$,
  and approximate the integral on $[0,1]$ by $n_o$-point Gauss-Jaboci quadrature and the other
  $N$ integrals on dyadic intervals by $n_l$-point Gauss-Legendre quadrature. This leads to
\begin{equation*}
  \begin{aligned}
  & \left| \frac{\beta}{\Gamma(1+\beta)}
  \left(\sum_{k=1}^{n_{o}} e^{-s_{o,k}t}w_{o,k}
  +\sum_{j=0}^{N-1}\sum_{k=1}^{n_l} e^{-s_{j,k}t}s_{j,k}^{\beta-1}w_{j,k}\right)
  -\frac{\beta}{\Gamma(1+\beta)}\int_0^{2^N} e^{-ts}s^{\beta-1}\D s\right|\\
  & \leq \frac{1}{t^\beta}\frac{\beta}{\Gamma(1+\beta)}
  \left( N 2^{\frac{5}{2}} \pi (2n_l)^{\beta} \left(\frac{1}{2}\right)^{2n_l}
  +
 \sqrt{\frac{\pi}{n_{o}}} \frac{2n_o+1}{2n_{o}+\beta}
  \left(\frac{e}{8n_o}\right)^{2n_o} \right).
  \end{aligned}
\end{equation*}
By choosing $n_l = O(\log \frac{N\beta}{\varepsilon})$ and $n_{o}=O(\log\frac{\beta}{\varepsilon})$, the error is bounded by $\varepsilon/t^{\beta}$ and
\begin{equation}
  \ba
  \nexp &= N n_l + n_o \lesssim \left(\log \frac{\beta}{\varepsilon} + \log N \right) N + n_o \\
  &=O\left( \left(\log\frac{\beta}{\varepsilon}+\log\log\frac{1}{\tilde \delta}\right)
  \left(\log\log\frac{\beta}{\varepsilon} +  \log\frac{1}{\tilde \delta}\right)\right), \qquad \beta>\varepsilon.
\ea\label{neestimate}
\end{equation}
\end{proof}
\begin{remark}\label{remark1}
For $\beta \le \varepsilon$, it is easy to see that the integration range in \cref{intrep4} can
be truncated to $[0,1]$ and the Gauss-Jacobi quadrature with $O(1)$ nodes is sufficient to approximate the integral
on $[0,1]$ with relative precision $\varepsilon$. In this case, $\nexp = O(1)$.
\end{remark}
\begin{remark}
We would like to emphasize that \cref{neestimate} provides a new complexity bound
on the total number of exponentials $\nexp$ needed to
achieve {\sl relative precision} $\varepsilon$, which is a key parameter
to determine the memory length in our scheme for solving the constant-Q wave equations.

Compared with our previous estimate  in \cite{JiangZhangZhangZhang2017} when bounding the {\sl absolute error}
\begin{equation}\label{nexpbound}
\nexp= O\left(\log\frac{1}{\varepsilon}\left(
    \log\log\frac{1}{\varepsilon}+\log\frac{T}{\delta}\right)
   +\log\frac{1}{\delta}\left(
     \log\log\frac{1}{\varepsilon}+\log\frac{1}{\delta}\right)
     \right),
\end{equation}
the $\log^2 ({1}/{\delta})$ factor in \cref{nexpbound} is gone. 
\end{remark}
\begin{remark}  
The dependence on $\beta$ is explicit in \cref{neestimate}. 
It is easy to see that $\nexp$ decreases as $\beta\rightarrow 0$
when other parameters are fixed (see \eqref{neestimate} and \cref{remark1}).
This is 
in agreement with the observation that the power function $1/t^\beta$
becomes closer and closer to a constant function as $\beta\rightarrow 0$.
Indeed, our nearly optimal SOE approximations for very small $\beta$
have a nearly static mode (i.e., an exponential node very close to zero).
On the other hand, the estimate on $\nexp$  in Theorem 5 of \cite{BeylkinMonzon2010}
contains $1/\beta$ and $\log \beta$ factors, which grows rapidly
as $\beta\rightarrow 0$.
\end{remark}

\subsubsection{Generalized Gaussian quadrature and the new integral representation}

Recently, GGQ (see, for example, \cite{ggq2}) has been applied to construct efficient SOE
approximations for $t^{-1}$ in \cite{GimbutasMarshallRokhlin2020}. As pointed out in \cite{ggq3}, the first step of GGQ is to use
singular value decomposition or pivoted QR decomposition to reduce the number of basis functions that need to be integrated.
For SOE approximation problems, it appears that there are infinitely many linearly independent functions since $t$ is the continuous
parameter on the interval $[\delta,T]$ in these functions (c.f., \eqref{powerfunctionrep}). 
However, to any {\it finite} precision $\varepsilon$,
the number of {\it numerically} linearly independent basis functions $n_f$ is surprisingly low for almost all problems encountered
in practice, even when some or all of these functions are singular or nearly singular.
This simple yet profound observation is critical to the usefulness of GGQ.
The second step of GGQ is to find an $n_f$-point quadrature to integrate these $n_f$ basis functions exactly, which
is easily achieved by preselecting $n_f$ quadrature nodes, say, at the shifted and scaled Gauss-Legendre nodes, then solving a square
linear system to obtain the associated quadrature weights. Finally, GGQ uses a nonlinear optimization procedure to reduce the number of
quadrature nodes by one in each step while maintaining the prescribed precision, until the final number of quadrature nodes is reduced by a factor of $2$ (or very close to $2$) (see, for example, \cite{BremerGimbutasRokhlin2010,serkh2016thesis}). 

In order to apply GGQ machinery to construct efficient SOE approximations for the power
function with small exponent, we introduce another change of variable $s=u^{1/\beta}$, leading to
\be\label{intrep3}
\frac{1}{t^\beta} = \frac{1}{\Gamma(\beta+1)}\int_0^\infty e^{-t u^{1/\beta}}\D u.
\ee
As compared with the integral representation \eqref{powerfunctionrep}, the advantage of the new integral
representation \eqref{intrep3} is obvious, especially for very small $\beta$. First, the factor
$\Gamma(\beta+1)$ approaches $1$ as $\beta\rightarrow 0$. Second, the integrand behaves much nicer
both at the origin and at the infinity. At the origin, the nearly singular term $s^{\beta-1}$ is gone.
And the integral can be truncated at a much smaller number
\be
p=O\left(\left(\frac{1}{\delta}\log\frac{1}{\varepsilon}\right)^\beta\right),
\quad t>\delta.
\ee

Algorithm \ref{alg.fat} contains a short summary on the construction of nearly optimal SOE approximations of the power function via the application
of GGQ to the integral representation \eqref{intrep3}.

\begin{algorithm}[!ht] 
\caption{(Nearly) optimal Sum-of-Exponential approximation via GGQ \label{alg.fat}}
\vspace{2mm}

\begin{itemize}

\item[Step 1.] Start from the integral representation \eqref{intrep3} of $t^{-\beta}$. Use SVD or pivoted QR decomposition to reduce the number of basis functions that need to be integrated \cite{ggq3,GimbutasMarshallRokhlin2020}. This step selects $n_f$ basis functions.

\item[Step 2.]  Preselect $n_f$ shifted and scaled Gauss-Legendre nodes and then solve a square linear system to obtain the associated quadrature weights. This step returns an $n_f$-point quadrature to integrate these $n_f$ basis functions exactly.

\item[Step 3.]  Use a nonlinear optimization procedure \cite{BremerGimbutasRokhlin2010,serkh2016thesis} to reduce the number of
quadrature nodes by one in each step while maintaining the prescribed precision.

\end{itemize}

\vspace{2mm}
\end{algorithm}

Here we provide some numerical examinations on the new SOE approximation. From \cref{curve_fitting}, it is confirmed that the new SOE approximation provides a good curve fitting for the power creep $t^{-\beta}$, even with a very small number of exponentials $\nexp \le 5$ (the precision is set to be $\varepsilon = 1\times10^{-2}$). Indeed, as presented in \cref{curve_error}, it achieves a uniform accuracy within the prescribed precision $\varepsilon$.

For typical qualify factors, we provide the number of exponentials in \cref{Nexp_SOE} to attain the given precision tolerance. It can be observed that GGQ is able to compress the memory variables to a large extent. Additionally, it requires fewer memory variables as the exponent becomes smaller. 
When the exponent is small, say, less than $0.1$, the number of exponentials $\nexp$ needed in the SOE approximation is reduced by a factor of two
or more as compared with published results~\cite{BeylkinMonzon2010,JiangZhangZhangZhang2017}.
For long-time simulations, as seen in \cref{Nexp_SOE}, $\nexp$ has to increase to maintain the prescribed precision due to $\log \frac{T}{\delta}$ factor in \eqref{neestimate}. 
Fortunately, the increase in $\nexp$ is fairly mild as the final instant $T$ increases from $10$ to $1000$ (i.e., the total number of time steps increases from $2000$ to $200000$). 

\begin{figure}[!ht]
    \centering
    \subfigure[Curve fitting for the power creep.\label{curve_fitting}]{{\includegraphics[width=0.49\textwidth,height=0.26\textwidth]{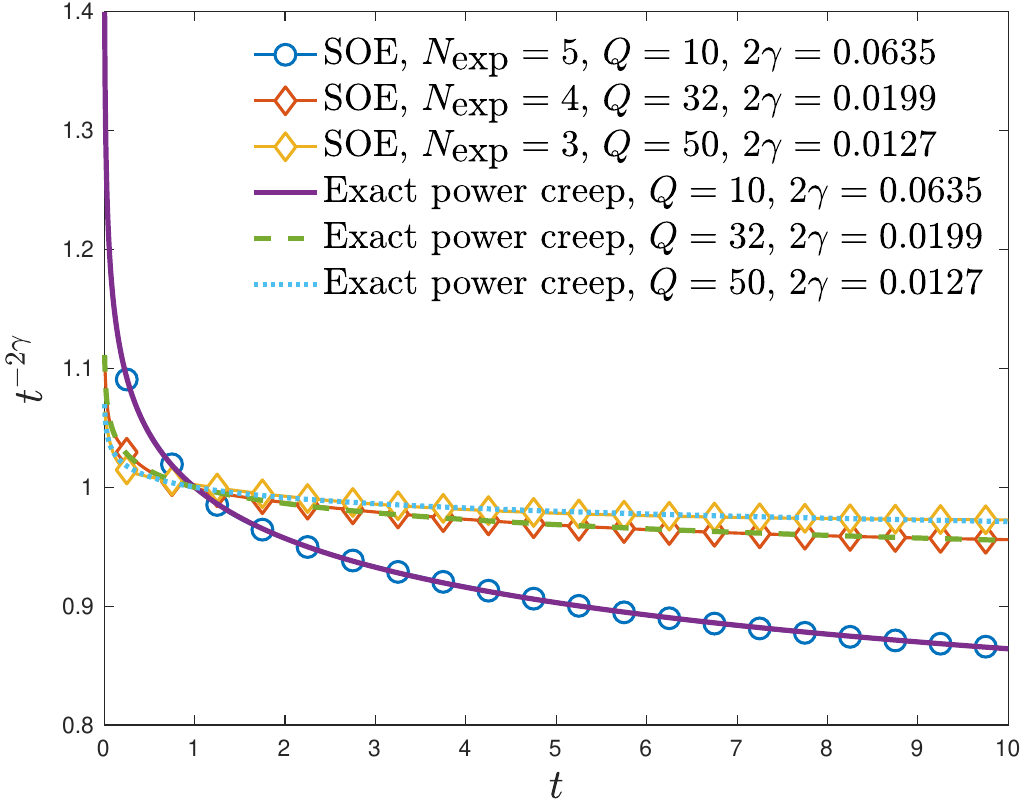}}}
    \subfigure[Absolute errors of SOE approximation.\label{curve_error}]{{\includegraphics[width=0.49\textwidth,height=0.26\textwidth]{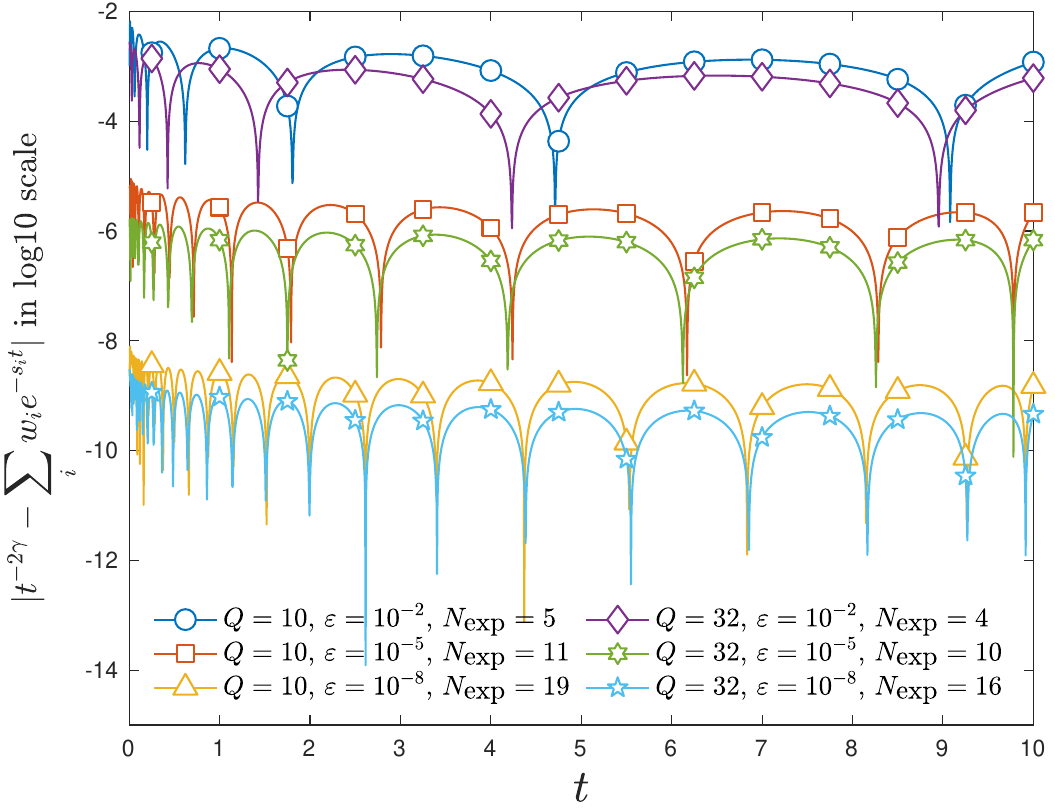}}}
    \caption{\small GGQ seeks an optimal curve fitting for the power creep $t^{-\beta}$. The number of exponentials $\nexp$ is adaptive to the fractional exponent $2\gamma$ and the error tolerance $\varepsilon$, and smaller $\nexp$ is allowed as the fractional exponent $\gamma$ decreases (for larger qualify factor $Q$).    \label{SOE_approx}}
\end{figure} 

\begin{table}[!ht]
  \centering
  \caption{\small Number of exponentials needed to approximate $t^{-\beta}$ with various error tolerance $\varepsilon$ and fractional exponents $\beta = 2\pi^{-1}\arctan Q^{-1}$.  Here, the gap (time step) is $\delta = 0.005$ and the final instants are $T=10, 100, 1000$, which are equivalent to
    $N_T=T/\delta= 2000, 20000, 200000$, respectively.
    Note that $\nexp$ decreases as the fractional exponent $\beta$ becomes smaller (for larger quality factor $Q$).
    \label{Nexp_SOE}}
\label{notation}
 \begin{lrbox}{\tablebox}
  \begin{tabular}{c|ccc|ccc|ccc|cccc}
  \hline\hline
  $T$   &  10 & $10^2$ & $10^3$	 &  10 & $10^2$ & $10^3$ &  10 & $10^2$ & $10^3$ &  10 & $10^2$ & $10^3$\\
  \hline
\diagbox[width=10\tabcolsep]{$\varepsilon$}{$\nexp$}{$\beta$}&
    \multicolumn{3}{c|}{$Q=10,\beta \approx 0.0635$} &  \multicolumn{3}{c|}{$Q=32,\beta \approx 0.0199$} &  \multicolumn{3}{c|}{$Q=50,\beta \approx 0.0127$}  &  \multicolumn{3}{c}{$Q=100,\beta \approx 0.0064$}   \\
\hline		
$10^{-2}$   &  5	 & 5 & 6 & 4 & 4 & 5 & 3 & 4 & 5 & 2 & 4 & 4\\
$10^{-3}$   &  7 & 7 & 9 & 6 & 7 &8 & 5 & 6 & 7 & 5 & 6 & 7\\
$10^{-4}$   & 9 & 10 & 12 & 8 & 9 &11 & 7 & 9 & 10 & 6 & 8 & 9 \\
$10^{-5}$   &  11 & 13 & 15 & 10 &12 &14 & 9 & 11& 13 & 8 &11 & 12\\
$10^{-6}$   &  13 & 15 & 18 & 12 &14 & 17 & 11& 14 & 16& 10 & 13 & 15\\
$10^{-7}$   & 15 & 17 & 21 & 14 & 17 &20 & 13 & 16 & 19 & 12 & 16 & 18\\
$10^{-8}$   & 17 & 20 & 24 & 16 & 19 &23 & 15 & 19 & 22 & 14 & 18 & 21\\
\hline\hline
 \end{tabular}
\end{lrbox}
\scalebox{0.95}{\usebox{\tablebox}}
\end{table}

\subsubsection{Equivalence between SOE approximations to the power creep and a rheological model}

In fact, the error bound \cref{soeappr} offers a rigorous mathematical justification for the capacity of rheological models to approximate the frequency-independent Q behavior. The fractional stress-strain relation \cref{fractional_stress_strain} is equivalent to 
 \begin{equation*}\label{SOE_stress_strain}
\begin{split}
\sigma(x, t) &= \frac{\rho(x) C_P(x)}{\Gamma(1 - 2\gamma(x))} \left[\frac{1}{t^{2\gamma(x)}} H(t) \ast \frac{\partial}{\partial t} \varepsilon(x, t)\right] \\
&\approx E_R(x) \sum_{j=1}^{\nexp} w_j  \int_{0}^{t} e^{- s_j(t - \tau) } \left[\frac{\partial}{\partial \tau} \varepsilon(x, \tau)\right] \D \tau = \underbrace{E_R(x) \sum_{j=1}^{\nexp} \frac{\tau_{\varepsilon_j}}{\tau_{\sigma_j}} e^{-\frac{t}{\tau_{\sigma_j}}} H(t)}_{\textup{modulus of GMB}} \ast  \frac{\partial}{\partial t} \varepsilon(x, t),
\end{split}
\end{equation*}
where $E_R(x)  = \rho(x) C_P(x)/\Gamma(1-2\gamma(x))$ is the equilibrium response of the viscoelastic material \cite{CaoYin2015}, $\tau_{\sigma_j} = s_j^{-1}$ and $\tau_{\varepsilon_j} = w_j \tau_{\sigma_j}$ are regarded as the relaxation times, $H(t)$ is the Heaviside step function and $\ast$ denotes the convolution in time. In practice, the first node $s_1$ is very close to $0$. As a result, the finite-node system \eqref{SOE_stress_strain} can be directly interpreted as a GMB \cite{ParkSchapery1999,SchmidtGaul2006,CaoYin2015}, which consists of $\nexp$ Maxwell chains in parallel, as depicted in \cref{viscoelastic_model}. The force acting in $j$-th branch of spring with relaxation time $\tau_{\sigma_j} = s_j^{-1}$ produces the response $\Phi_j = \Phi(x, s_j, t) $, and the stress $\sigma$ is recovered by summing over $\Phi_j$ weighted by $w_j  $ and relaxation modulus $E_R(x)$.  
\begin{figure}[!ht]
\centering
\includegraphics[width=0.8\textwidth,height=0.37\textwidth]{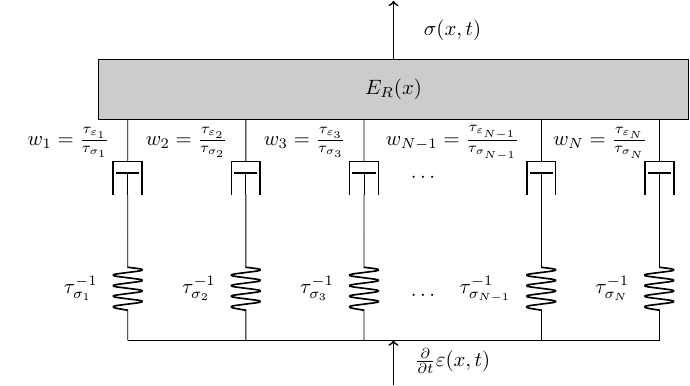}
    \caption{\small The connection between the SOE approximation of the fractional stress-strain relation and a classical rheological model. The coefficients of the SOE approximation are determined by a uniformly accurately curve fitting to the power creep function, which is adaptive to the constant-Q viscoelastic behavior. \label{viscoelastic_model}}
\end{figure} 

From \cref{curve_error}, it is evident that the new SOE approximation achieves a uniformly accurate approximation to the Caputo fractional derivative operator up to the final time $T$.
This contrasts with the behavior of projection errors in the Gauss-Laguerre quadrature rule, as adopted in the Yuan-Agrawal method
\cite{YuanAgrawal2002,BlancChiavassaLombard2014,XiongGuo2022},
which may amplify over time. Therefore, it partially addresses the criticism made in \cite{SchmidtGaul2006}. The accuracy of the SOE approximation, as well as the short-memory numerical scheme, can be ensured by allowing more flexibility in the parameters of the SOE approximation to adapt to the viscoelastic behaviors of the provided material data.

\subsection{Application to P- and S-wave propagation in 3D media}
We outline the changes in our scheme for solving the constant-Q wave equation
\cref{conservation_momentum} --\cref{stress_strain_relation}.  
The whole scheme can be easily applied to solve the P- and S-wave propagation, where the vector form of fractional stress-strain relation involves two Caputo fractional derivative operators. Starting with the SOE approximations for two different power creeps,  
\begin{equation}
  t^{-2\gamma_P}\approx \sum_{l=1}^{M_P} w_{l}^{(P)} e^{-s_l^{(P)} t},
  \quad t^{-2\gamma_S}\approx \sum_{l=1}^{M_S} w_{l}^{(S)} e^{-s_{l}^{(S)} t},
\end{equation}
we define $M_P$ auxiliary functions for the trace of strain,
\begin{equation*}
  \Phi_{123}(\bx, s_l^{(P)}, t) = \int_{0}^{t} e^{- s_l^{(P)}(t - \tau) } \left[\frac{\partial}{\partial \tau} (\varepsilon_{11}(\bx, \tau)+\varepsilon_{22}(\bx, \tau)+\varepsilon_{33}(\bx, \tau))\right] \D \tau,
  \quad l=1,\ldots, M_P,
\end{equation*}
and $6M_S$ auxiliary functions for the other six strain components,
\begin{equation*}
\Phi_{ij}(\bx, s_l^{(S)}, t) = \int_{0}^{t} e^{- s_l^{(S)}(t - \tau) } \left[\frac{\partial}{\partial \tau} \varepsilon_{ij}(\bx, \tau)\right] \D \tau, \quad i\ne j,~ 1\le i, j \le 3,~l=1,\ldots, M_S.
\end{equation*}
Thus, it requires storing $M_P + 6M_S$ memory variables in total.

Similar to \cref{stiff_equation}, $\Phi_{123}(\bx, s_l^{(P)}, t)$ and $\Phi_{ij}(\bx, s_l^{(S)}, t) $
satisfy the differential equations (without fractional derivatives)
\begin{align} 
  &\frac{\partial \Phi_{123}(\bx, s_l^{(P)}, t)}{\partial t}  = - s_l^{(P)}\Phi_{123}(\bx, s_l^{(P)}, t) +\frac{\partial v_{1}(\bx, t)}{\partial x_1}  + \frac{\partial v_{2}(\bx, t)}{\partial x_2}  + \frac{\partial v_{3}(\bx, t)}{\partial x_3} , \label{dynamics_phi_trace}\\
&  \frac{\partial \Phi_{ij}(\bx, s_l^{(S)}, t) }{\partial t} = - s_l^{(S)}\Phi_{ij}(\bx, s_l^{(S)}, t) + \frac{1}{2}\left(\frac{\partial v_{j}(\bx, t)}{\partial x_i}  + \frac{\partial v_{i}(\bx, t)}{\partial x_j} \right), \; i\ne j,\; 1 \le i, j \le 3,\label{dynamics_phi}
\end{align}
and the initial conditions
\begin{align}
&\Phi_{123}(\bx, s_l^{(P)}, t=0^+) = e^{-s_l^{(P)}}\frac{\sigma_{11}(\bx, t=0^+) + \sigma_{22}(\bx, t=0^+) + \sigma_{33}(\bx, t=0^+)}{(\Gamma(1-2\gamma_P(\bx)))^{-1}  \rho(\bx) C_P(\bx) }, \\
&\Phi_{ij}(\bx, s_l^{(S)}, t=0^+) = e^{-s_l^{(S)}}\frac{ \sigma_{ij}(\bx, t=0^+)}{2(\Gamma(1-2\gamma_S(\bx)))^{-1}\rho(\bx) C_S(\bx) }, \quad i\ne j, \quad 1 \le i, j \le 3.
\end{align}
The short-memory operator splitting scheme for 3D viscoelastic wave equation is summarized in Algorithm \ref{SMOS}, where $v^{n}$, $\sigma^n$, $\Phi_{123}^n$ and $\Phi_{ij}^n$ denote the numerical solutions at $t = t_n$. Note that the first node $s_1$ is usually very close to $0$ (see the tables of SOE formulae in our supplementary material). To avoid the numerical instability, we use the approximations $e^{-s_1 \Delta t} \approx 1$ and  ${(1 - e^{-s_1 \Delta t})}/{s_1} \approx \Delta t$. 

\begin{algorithm}[!ht] 
\caption{Short-memory operator splitting scheme for 3D viscoelastic wave equation\label{SMOS}}

\vspace{2mm}
\begin{itemize}
\item[1.] Half-step update of the velocity components:
\begin{equation*}
v_i^{n+\frac{1}{2}}(\bx) = v_i^n(\bx) + \frac{1}{\rho(\bx)} \left(\frac{\Delta t}{2}  \sum_{j=1}^3\frac{\partial}{\partial x_j} \sigma^n_{ij}(\bx) + \int_{t_n}^{t_{n}+\frac{\Delta t}{2}} f_i(\bx, \tau) \D \tau \right), \quad  i =1, 2, 3.
\end{equation*}

\item[2.] Full-step update of auxiliary functions for $l_1=1, 2\dots, M_P$, $ l_2=1, 2\dots, M_S$, $i, j = 1, 2, 3$:
\begin{equation*}
\begin{split}
&\Phi_{123}^{n+1}(\bx, s_{l_1}^{(P)}) = e^{-s_{l_1}^{(P)} \Delta t}\Phi_{123}^n(\bx, s_{l_1}^{(P)}) +  \frac{1 - e^{-s_{l_1}^{(P)} \Delta t}}{s_{l_1}^{(P)} }\sum_{j=1}^3\frac{\partial}{\partial x_j}  v_j^{n+\frac{1}{2}}(\bx),\\
&\Phi_{ij}^{n+1}(\bx, s_{l_2}^{(S)}) = e^{-s_{l_2}^{(S)} \Delta t}\Phi_{ij}^n(\bx, s_{l_2}^{(S)}) +  \frac{1 - e^{-s_{l_2}^{(S)} \Delta t}}{2s_{l_2}^{(S)}}\left(\frac{\partial}{\partial x_i}  v_j^{n+\frac{1}{2}}(\bx) + \frac{\partial}{\partial x_j}  v_i^{n+\frac{1}{2}}(\bx)\right).
\end{split}
\end{equation*}

\item[3.] Update of the stress components:
\begin{equation*}
\begin{split}
&\sigma^{n+1}_{11}(\bx) = E_R^{(P)}(\bx)\sum_{l=1}^{M_P} w_l^{(P)} \Phi_{123}^{n+1}(\bx, s_l^{(P)}) - 2 E_R^{(S)}(\bx) \sum_{l=1}^{M_S} w_l^{(S)} (\Phi^{n+1}_{22}+\Phi^{n+1}_{33})(\bx, s_l^{(S)}), \\
&\sigma^{n+1}_{22}(\bx) = E_R^{(P)}(\bx) \sum_{l=1}^{M_P} w_l^{(P)} \Phi_{123}^{n+1}(\bx, s_l^{(P)}) - 2 E_R^{(S)}(\bx) \sum_{l=1}^{M_S} w_l^{(S)} (\Phi^{n+1}_{11}+\Phi^{n+1}_{33})(\bx, s_l^{(S)}), \\
&\sigma^{n+1}_{33}(\bx) = E_R^{(P)}(\bx) \sum_{l=1}^{M_P} w_l^{(P)} \Phi_{123}^{n+1}(\bx, s_l^{(P)}) - 2 E_R^{(S)}(\bx) \sum_{l=1}^{M_S} w_l^{(S)} (\Phi^{n+1}_{11}+\Phi^{n+1}_{22})(\bx, s_l^{(S)}), \\
& \sigma^{n+1}_{ij}(\bx) = 2  E_R^{(S)}(\bx) \sum_{l=1}^{M_S} w_l^{(S)} \Phi_{ij}^{n+1}(\bx, s_l^{(S)}), \quad 1 \le i < j \le 3,
\end{split}
\end{equation*}
where $E_R^{(P)}(\bx) = \frac{\rho(\bx) C_P(\bx)}{\Gamma(1-2\gamma_P(\bx))}$ and $E_R^{(S)}(\bx) = \frac{\rho(\bx) C_S(\bx)}{\Gamma(1-2\gamma_S(\bx))}$.

\item[4.] Half-step update of the velocity components:
\begin{equation*}
v_i^{n+1}(\bx) = v_i^{n+\frac{1}{2}}(\bx) + \frac{1}{\rho(\bx)} \left(\frac{\Delta t}{2}  \sum_{j=1}^3\frac{\partial}{\partial x_j} \sigma^{n+1}_{ij}(\bx) + \int_{t_n + \frac{\Delta t}{2}}^{t_{n+1}} f_i(\bx, \tau) \D \tau \right), \quad  i =1, 2, 3.
\end{equation*}

\end{itemize}
\end{algorithm}

\section{Numerical experiments}
\label{sec.numerical}
We have implemented the scheme for solving the constant-Q wave equation
\cref{conservation_momentum}--\cref{stress_strain_relation} and its special
case - scalar viscoacoustic P-wave equation \cref{pwaveequation} in Fortran with OpenMP
shared-memory parallelization. All numerical results are obtained on a machine 
with AMD Ryzen 5950X (3.40GHz, 72MB Cache, 16 Cores, 32 Threads) and 128GB Memory@3600Mhz.

The Fourier spectral method is used for spatial discretization, while the exponential Strang operator splitting scheme is used for time evolution. To avoid the artificial wave reflection, we choose a sufficiently large computational domain such that the wavepacket does not reach the boundary during the entire simulation. For readers' convenience, we collect the nodes and weights of SOE approximations in our supplementary material with
the quality factor $Q=10, 32, 50, 100$, relative precision $\varepsilon = 10^{-2},\ldots, 10^{-7}$,
$T = 10$ and $\Delta t = 0.005$. 
To measure the numerical error, we use the relative $L^2$-error $\varepsilon_2[v](t)$ and the relative maximum error $\varepsilon_{\infty}[v](t)$,
\begin{equation*}
\begin{split}
\varepsilon_2[v](t) = \frac{\left(\int_{\Omega} |v^{\textup{num}}(\bx, t) - v^{\textup{ref}}(\bx, t)|^2\D \bx \right)^{1/2}}{\max_{\bx}|v^{\textup{ref}}(\bx, t)|}, \quad \varepsilon_\infty[v](t) = \frac{\max_{\bx \in \Omega} |v^{\textup{num}}(\bx, t) - v^{\textup{ref}}(\bx, t)|}{\max_{\bx}|v^{\textup{ref}}(\bx, t)|},
\end{split}
\end{equation*}
where $v^{\textup{num}}$ and $v^{\textup{ref}}$ denote the numerical and reference velocity wavefields, respectively.

\subsection{3D viscoacoustic wave equation}
We first study the scalar viscoacoustic P-wave equation \cref{pwaveequation}.
The initial condition is chosen to be $v_0(\bx) = e^{-|\bx|^2}$, and the analytical
solution (see \cref{exact_solution}) is given by the formula
\begin{equation}\label{analytical}
v(\bx, t) = -\frac{1}{4\pi C_P t^{2-2\gamma_P}} \int_{\mathbb{R}^3} \frac{e^{-|\bx - \by|^2}}{|\by|} \frac{\partial  M_{1 - \gamma_P}(z)}{\partial z} \Big |_{z = \frac{|\by|}{\sqrt{C_P} t^{1-\gamma_P}}}   \D \by, 
\end{equation}
where the Mainardi function $M_{l}(z)$ is a special Wright function of the second kind \cite{bk:Mainardi2010} (see Appendix \ref{Green}). Here we focus on the solution on the line $\bx = (0, 0, x_3)$,
\begin{equation*}
\begin{split}
v(\bx, t) & =  -\frac{e^{-|x_3|^2}}{4\pi C t^{2-2\gamma_P}} \int_{0}^{+\infty} \D r\int_{0}^{\pi} \D \theta \int_0^{2\pi} \D \phi ~e^{-2x_3 r \cos\theta} r e^{-r^2} \sin \theta \left(\frac{\partial  M_{1 - \gamma_P}(z)}{\partial z} \Big |_{z = \frac{r}{\sqrt{C_P} t^{1-\gamma_P}}}\right) \\
& = -\frac{1}{4 C_P t^{2-2\gamma_P}} \int_{0}^{+\infty} \frac{1 - e^{-4x_3 r} }{x_3} e^{-(x_3-r)^2} \left(\frac{\partial  M_{1 - \gamma_P}(z)}{\partial z} \Big |_{z = \frac{r}{\sqrt{C_P} t^{1-\gamma_P}}}\right)  \D r.
\end{split}
\end{equation*}

To seek an accurate approximation to the analytical solution, we first truncate the $r$-domain by utilizing the rapid decay of $\frac{\partial  M_{1 - \gamma}(z)}{\partial z}$, then apply the Gauss-Legendre quadrature on the finite interval to obtain
\begin{equation*}\label{convolution_approx}
\begin{split}
v(\bx, t)&\approx
\left\{
\begin{split}
&\frac{1}{C_P t^{2-2\gamma_P}} \sum_{j = 1}^{N_{H}} \omega_j^{H} \frac{1 - e^{-4x_3 r_j}}{4x} e^{-(x_3-r_j)^2}\left(-\frac{\partial  M_{1 - \gamma_P}(z)}{\partial z} \Big |_{z = \frac{r_j}{\sqrt{C_P} t^{1-\gamma_P}}} \right), \quad x_3 > 0, \\
&\frac{1}{C_P t^{2-2\gamma_P}}  \sum_{j = 1}^{N_{H}} \omega_j^{H} r_j e^{-(x_3-r_j)^2} \left(-\frac{\partial  M_{1 - \gamma_P}(z)}{\partial z} \Big |_{z = \frac{r_j}{C_P t^{1-\gamma_P}}} \right), \quad x_3 = 0,
\end{split}
\right.
\end{split}
\end{equation*}
where $(r_j, \omega_j^H)_{j=1}^{N_G}$ are collocation points and weights, respectively. The Mainardi functions are calculated using the algorithm in \cite{XiongGuo2022}, where the saddle point approximation \eqref{saddle_point} is adopted for large $|z|$.
This can achieve a reference solution with a precision of about $10^{-8}$,
though the accuracy is still limited by the truncation error in the asymptotic expansion.
Therefore, we also use the numerical results with the SOE approximation with $10$ digits of accuracy to check the numerical accuracy.
\begin{figure}[!ht]
    \centering
    \subfigure[The relative $L^2$-error (left) and the relative maximum error (right) at $t = 8$. \label{convergence_Nexp}]{
    {\includegraphics[width=0.49\textwidth,height=0.26\textwidth]{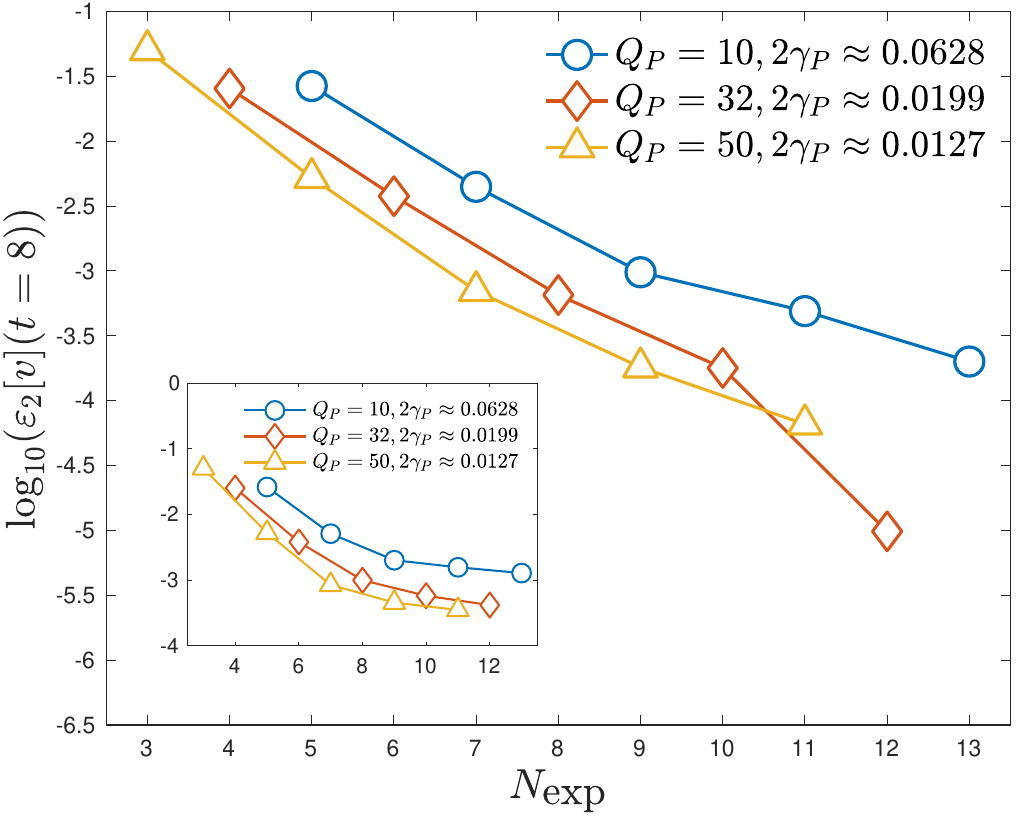}}
    {\includegraphics[width=0.49\textwidth,height=0.26\textwidth]{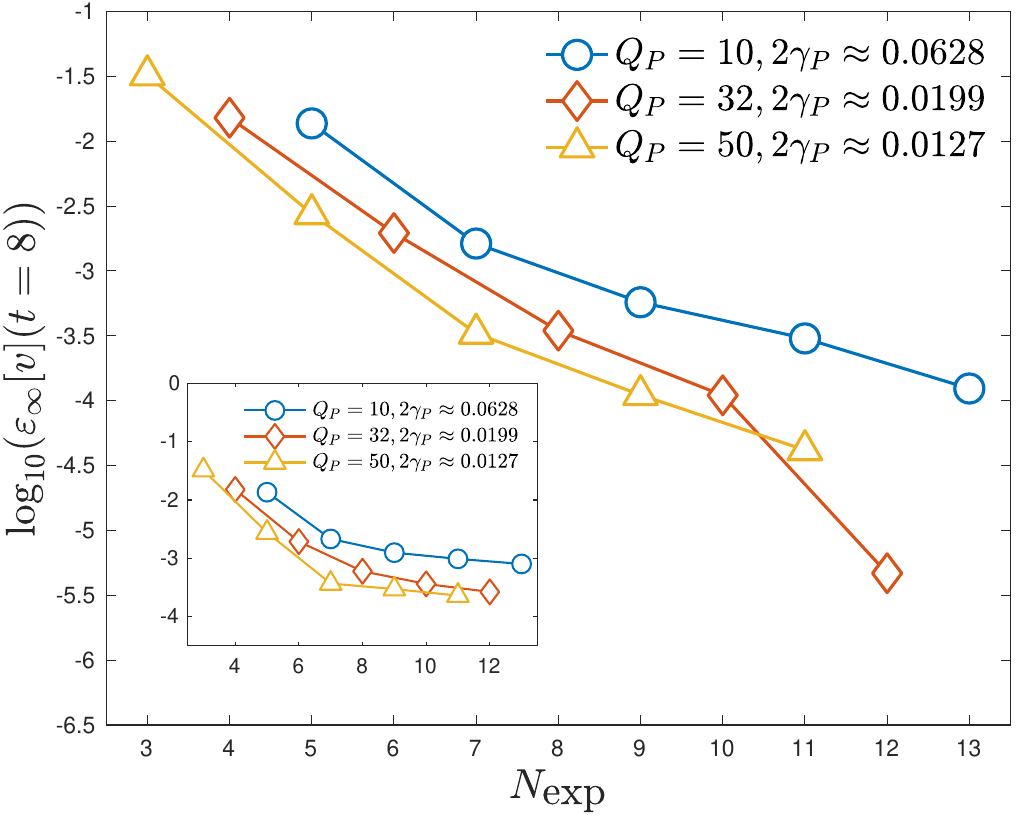}}}
    \\
    \centering
    \subfigure[The wavefield $v(0, 0, x_3)$ (left) and the relative error (right) when $Q_P=10$. \label{acoustic_error_QP10}]{{\includegraphics[width=0.49\textwidth,height=0.26\textwidth]{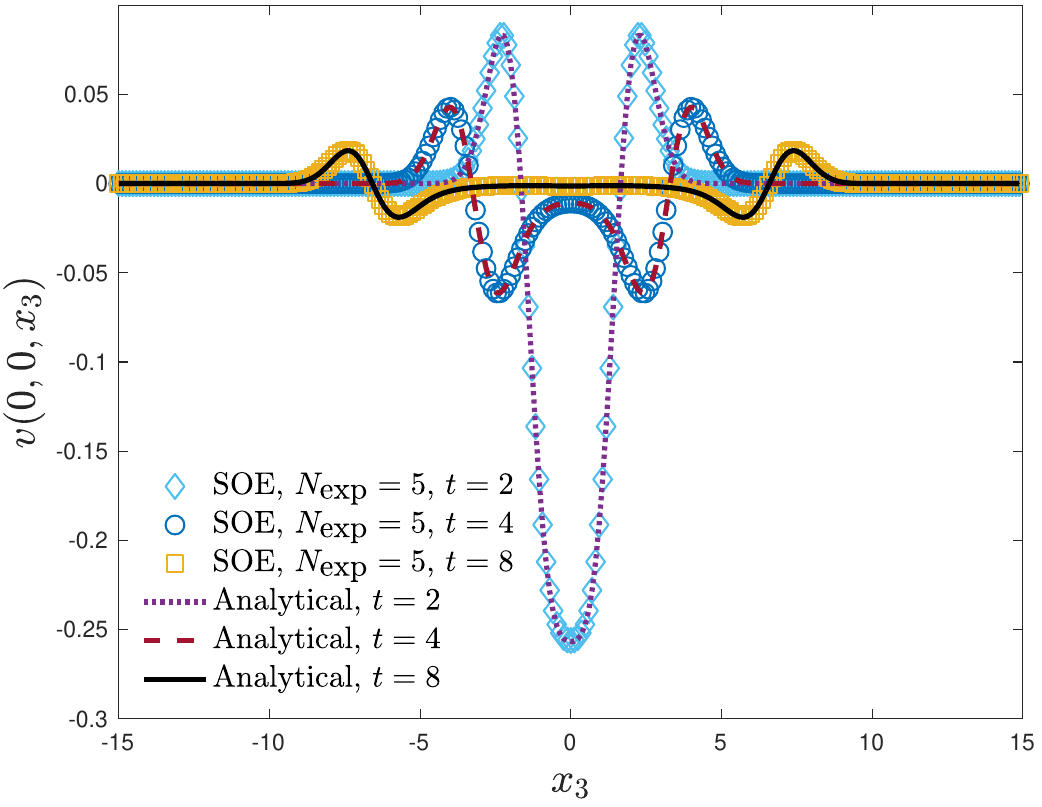}}
     {\includegraphics[width=0.49\textwidth,height=0.26\textwidth]{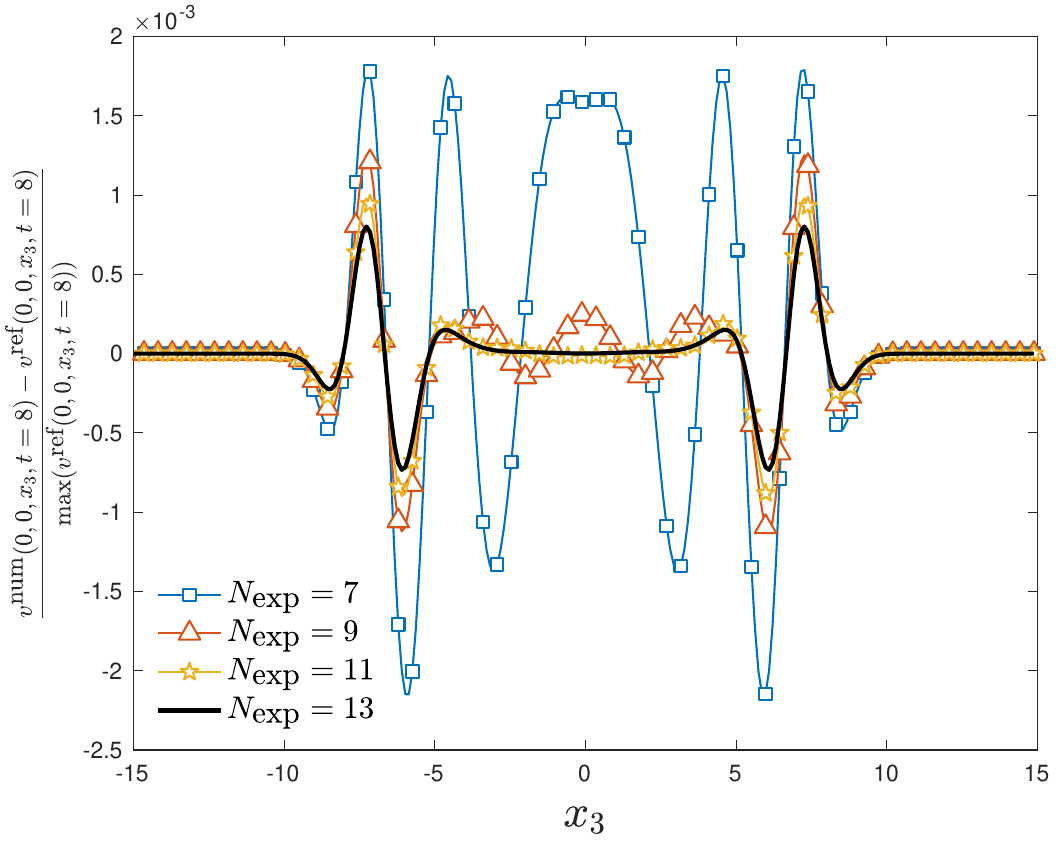}}}
    \\
    \subfigure[The wavefield $v(0, 0, x_3)$ (left) and the relative error (right) when $Q_P=32$. \label{acoustic_error_QP32}]{{\includegraphics[width=0.49\textwidth,height=0.26\textwidth]{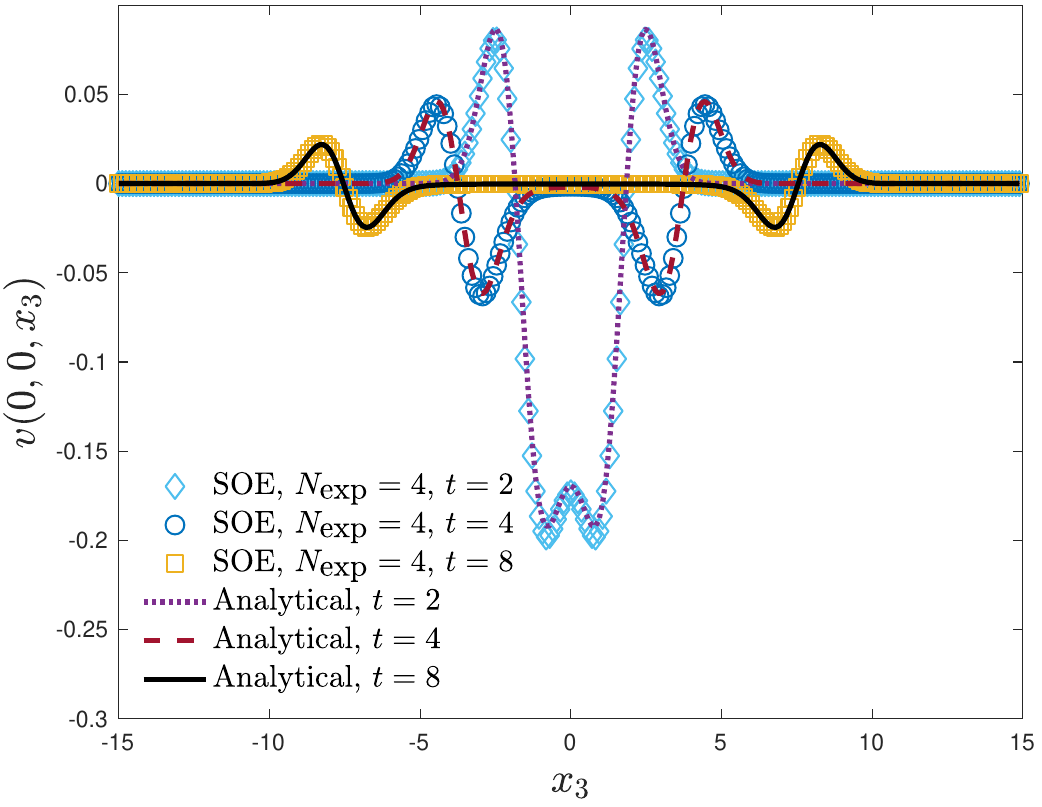}}
    {\includegraphics[width=0.49\textwidth,height=0.26\textwidth]{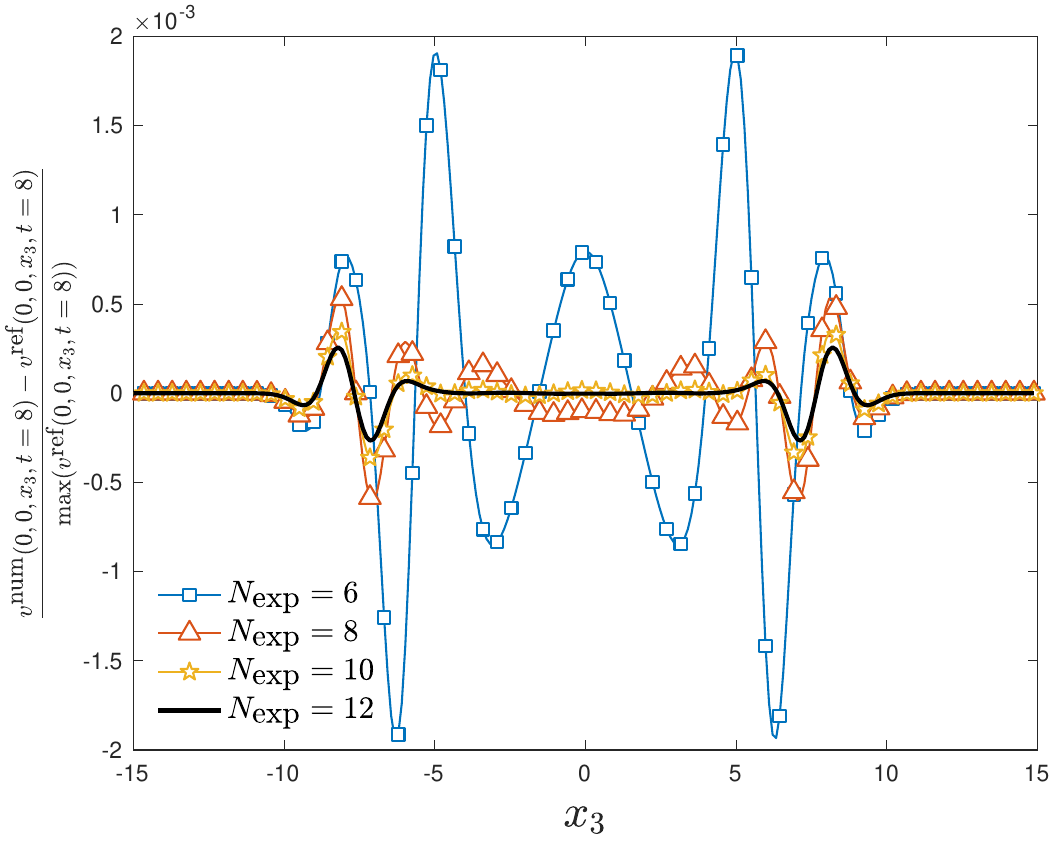}}}
        \\
    \subfigure[The wavefield $v(0, 0, x_3)$ (left) and the relative error (right) when $Q_P=50$. \label{acoustic_error_QP50}]{{\includegraphics[width=0.49\textwidth,height=0.26\textwidth]{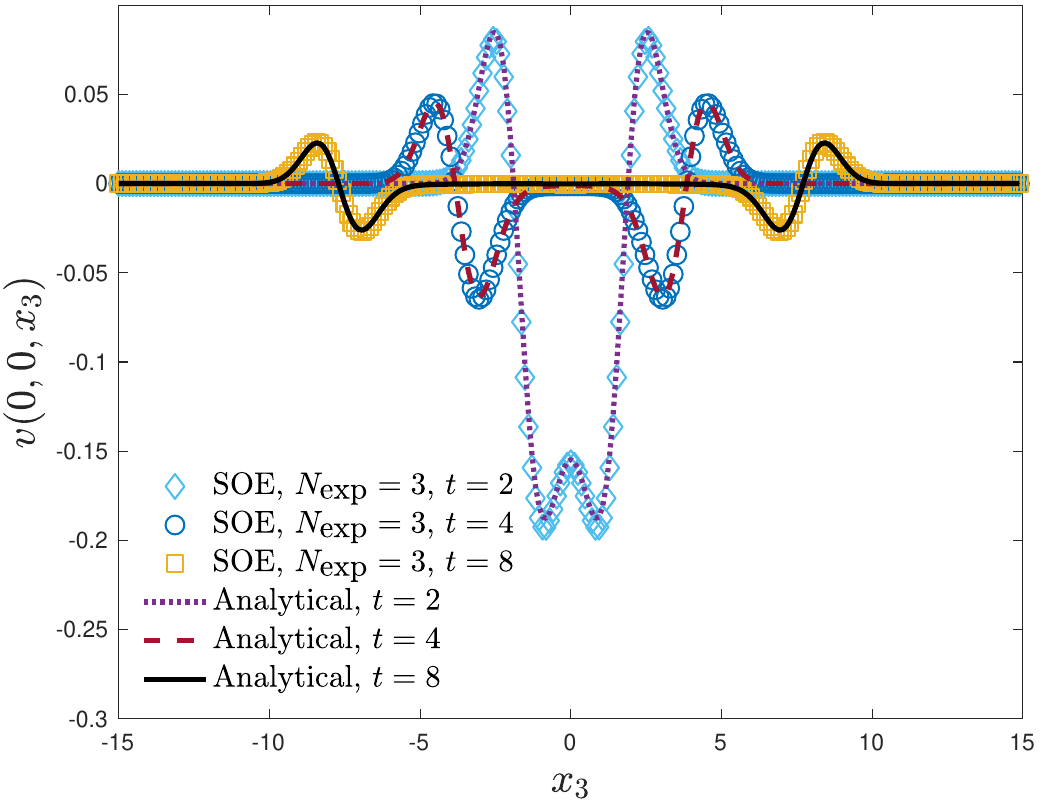}}
    {\includegraphics[width=0.49\textwidth,height=0.26\textwidth]{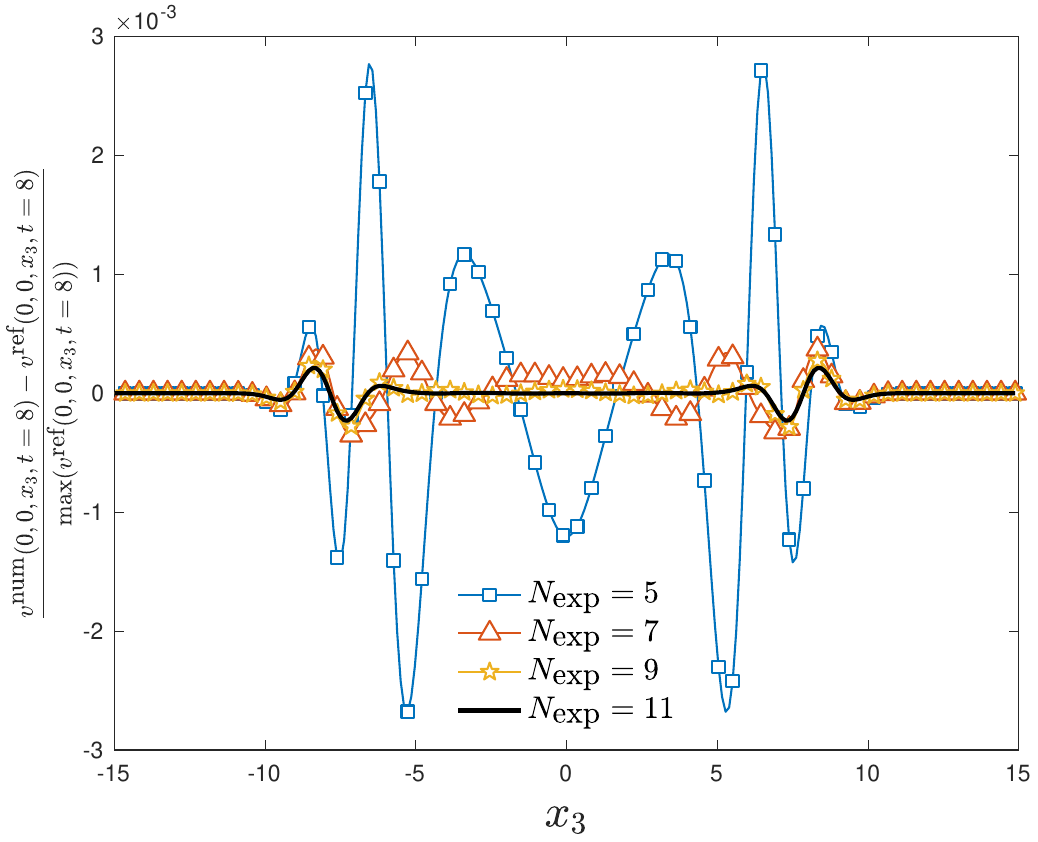}}}
    \caption{\small 3-D viscoacoustic wave equation: The rapid convergence with respect to the memory length $\nexp$ is verified, where either numerical solutions produced under $N^3 = 256^3$ and $\Delta t = 0.001$ or the analytical solutions (in panel) are adopted as the reference. For $Q_P \ge 10$, the new SOE can achieve a uniform relative error less than $10^{-3}$ with $N_{\exp} < 10$.    
     \label{SOE_error_Q}}
\end{figure} 

\begin{figure}[!ht]
    \centering
    \subfigure[Second-order convergence in Strang splitting.]{{\includegraphics[width=0.49\textwidth,height=0.26\textwidth]{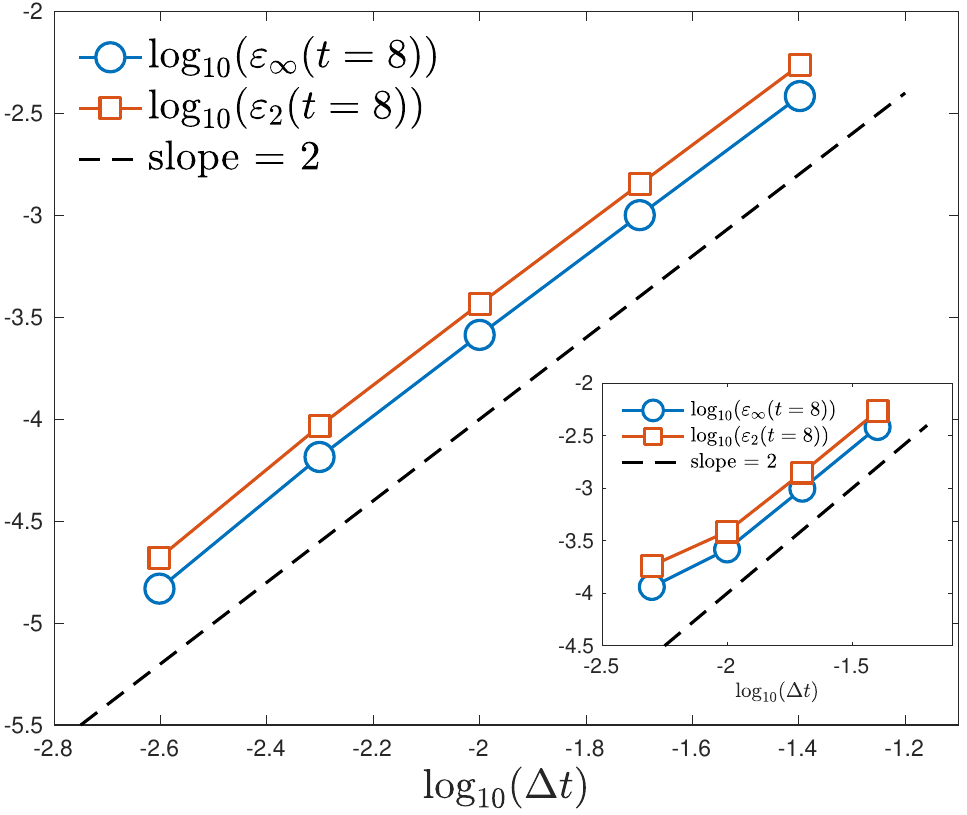}}}
   \subfigure[Spectral convergence in spatial discretization.]{{\includegraphics[width=0.49\textwidth,height=0.26\textwidth]{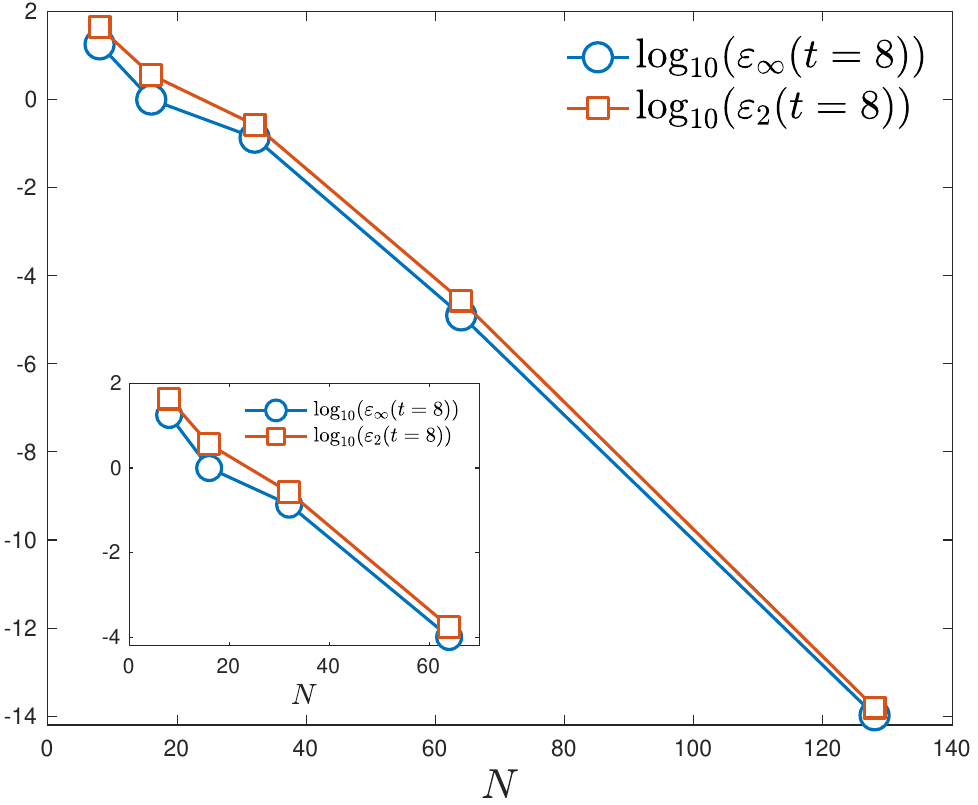}}}
    \caption{\small 3-D viscoacoustic wave equation: Convergence of the Strang splitting and the Fourier spectral method is verified, where either numerical solutions produced under $N^3 = 256^3$ and $\Delta t = 0.001$ or the analytical solutions (in panel) are adopted as the reference.\label{spatial_temporal_convergence}}
\end{figure} 

As the first step, we need to verify the convergence of the exponential Strang splitting and the Fourier spectral method. The model parameters are set as follows: the group velocity $c_P =1$km/s, the quality factor $Q_P = 10$, the mass density $\rho = 1\textup{g}/\textup{cm}^3$, and the reference frequency $\omega_0 = 2\pi \times 100$Hz. The computational domain is $[-15, 15]^3$, and several groups of simulations under different grid meshes $N^3 = 8^3,16^3, 32^3, 64^3, 128^3$ and time stepsizes $\Delta t= 0.04, 0.02, 0.01, 0.005, 0.0025$ are performed up to the final time $T=8$s, where numerical solutions produced under $N^3 = 256^3$ and $\Delta t= 0.001$ are used as the reference.  As presented \cref{spatial_temporal_convergence}, it exhibits second-order convergence in time and spectral convergence in space. We also use analytical solutions as the reference and plot the error curve in the panel of \cref{spatial_temporal_convergence} as a comparison, and find that the convergence trend is almost the same.

More importantly, it is necessary to investigate the convergence with respect to the memory length $\nexp$ for different quality factors $Q_P$. To this end, we adopt the same parameters: the group velocity $c_P = 1$km/s, the mass density $\rho =1\textup{g}/\textup{cm}^3$, the reference frequency $\omega = 2\pi \times 100$Hz, and the computational domain $[-15, 15]^3$ under the grid mesh $N^3 = 256^3$ and time stepsize $\Delta t = 0.005$s. Three groups of simulations with $Q_P = 10, 32, 50$ are performed and the numerical results are collected in \cref{SOE_error_Q}, where the reference is chosen to be the numerical solutions with $\nexp = 19, \varepsilon=10^{-9}$ for $Q_P = 10$, $\nexp = 18, \varepsilon=10^{-9}$ for $Q_P = 32$, $\nexp = 17, \varepsilon=10^{-9}$ for $Q_P = 50$, respectively. To make numerical results more convincing, we also calculate the analytical solutions as the reference and plot the error curves in the panels of \cref{SOE_error_Q}. In both situations, rapid convergence with respect to $\nexp$ is verified. Moreover, for a larger $Q_P$, fewer memory wavefields are needed to attain the same precision, as clearly observed in \cref{SOE_error_Q}. Therefore, the stagnation in the SOE approximation has indeed been avoided.

A comparison between the numerical wavefield $v(0, 0, x_3)$ at $t=8$s and the analytical
solution \eqref{analytical} with different $Q_P$,  as well as the visualization of the relative errors,  is given in \cref{acoustic_error_QP10}--\cref{acoustic_error_QP50}. The coincidence between the two different approaches is observed with only a few memory variables. The accuracy can be systematically improved with a larger $\nexp$. Moreover, even for the strong attenuation case ($Q_P = 10$),
our SOE approximation can still achieve a relative error less than $10^{-3}$ with $N_{\exp} < 10$.  Therefore, the efficient compression of memory wavefields in new SOE approximation may significantly reduce
 the memory requirement and computational complexity when solving the time-fractional wave equation.

\subsection{Viscoelastic wave propagation in 3D  homogeneous media}
\label{sec.3d.homo}
We now study the vector wave equation \cref{conservation_momentum}--\cref{stress_strain_relation}.
To study the propagation of P- and S-wave in viscoelastic media, we set the initial velocity and stress tensors to an equilibrium state and activate the velocities components with source functions $f_1(\bx, t) = f_2(\bx, t) = f_3(\bx, t) = A(\bx) f_r(t)$, which uses a Ricker-type wavelet history and a Gaussian profile $A(\bx)$,
\begin{equation}
A(\bx) = e^{-{|\bx-\bx^c|^2}}, \quad f_r(t) = (1 - 2(\pi f_P (t- d_r))^2 )e^{-(\pi f_P(t- d_r))^2},
\end{equation}
where $f_P$ is the peak frequency and $d_r$ is the temporal delay. Here we choose $f_P = 100$Hz, $d_r = 0$ and the center position $\bx^c = (0, 0, 10)$. The model parameters in homogenous media are set as follows: the group velocities for P-wave and S-wave  $c_P = 2.614$km/s and $c_S = 0.802$km/s, respectively, the mass density $\rho = 2.2 \textup{g}/\textup{cm}^3$, and the frequency frequency $\omega_0 = 2\pi \times 100$Hz \cite{Carcione2009}. The computational domain is $[-40, 40]^3$ ($80\textup{km}\times 80 \textup{km}\times 80 \textup{km}$), and the Fourier spectral method is adopted for spatial discretization with the grid size $N^3 = 256^3$. The reference solutions for $Q_P = 32$, $Q_S = 10$ are produced using the grid mesh $N^3 = 256^3$,  $\Delta t = 0.005$s,  $\varepsilon = 10^{-9}$ and $M_P = 18$, $M_S = 19$ for P- and S-waves, respectively. 
\begin{figure}[!ht]
\centering
\subfigure[$v_3(x_1, 0, x_3)$ at $4 \to 6 \to 10$s in viscoelastic (left) or elastic (right) medium. Here $M_P=4$, $M_S=5$.]{
{\includegraphics[width=0.32\textwidth,height=0.21\textwidth]{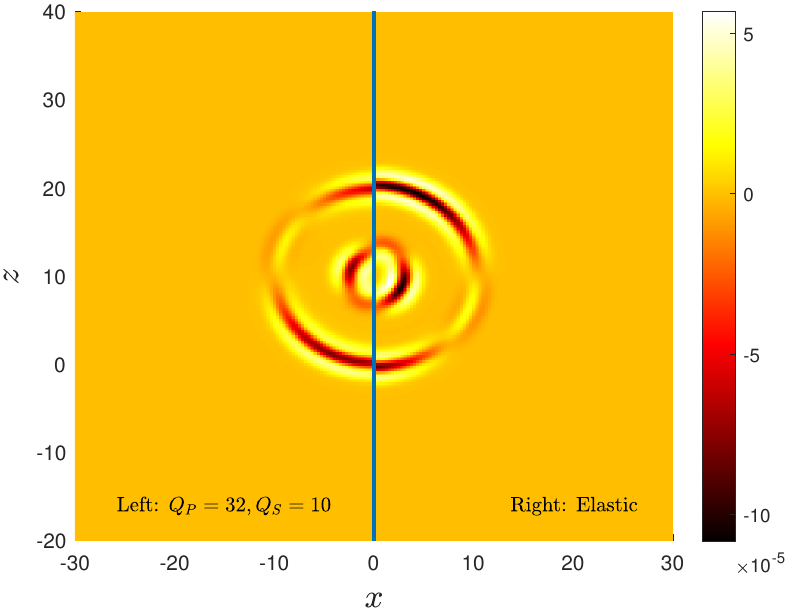}}
{\includegraphics[width=0.32\textwidth,height=0.21\textwidth]{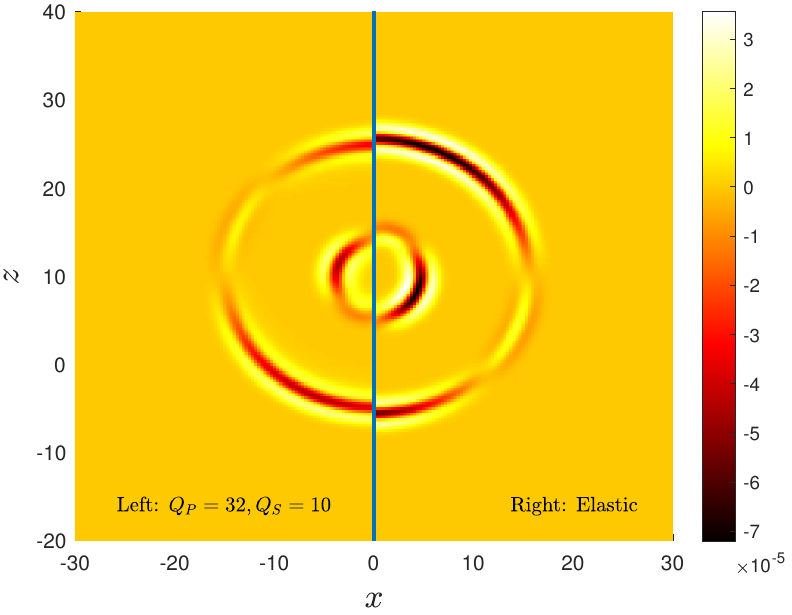}}
{\includegraphics[width=0.32\textwidth,height=0.21\textwidth]{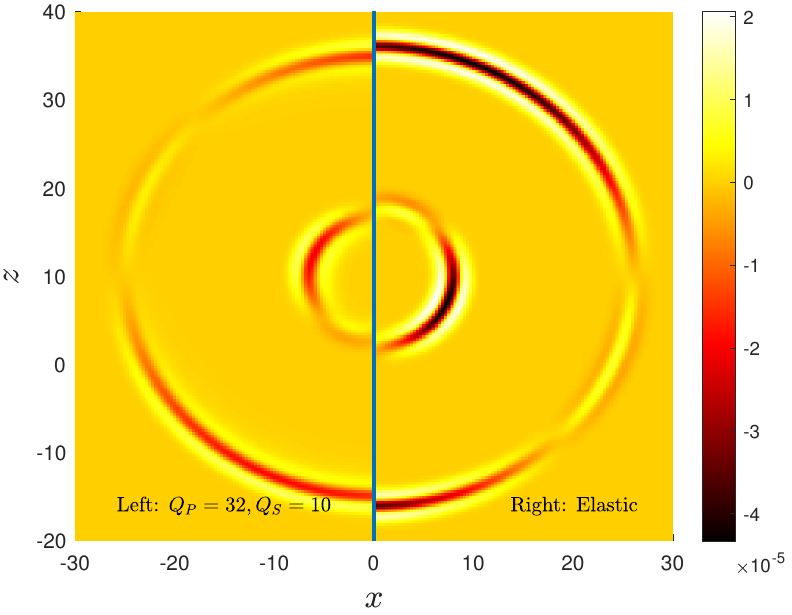}}}
\\
\centering
\subfigure[$v_3(x_1, 0, x_3)$ at $4 \to 6 \to 10$s in viscoelastic (left) or elastic (right) medium. Here $M_P=18$, $M_S=19$.]{
{\includegraphics[width=0.32\textwidth,height=0.21\textwidth]{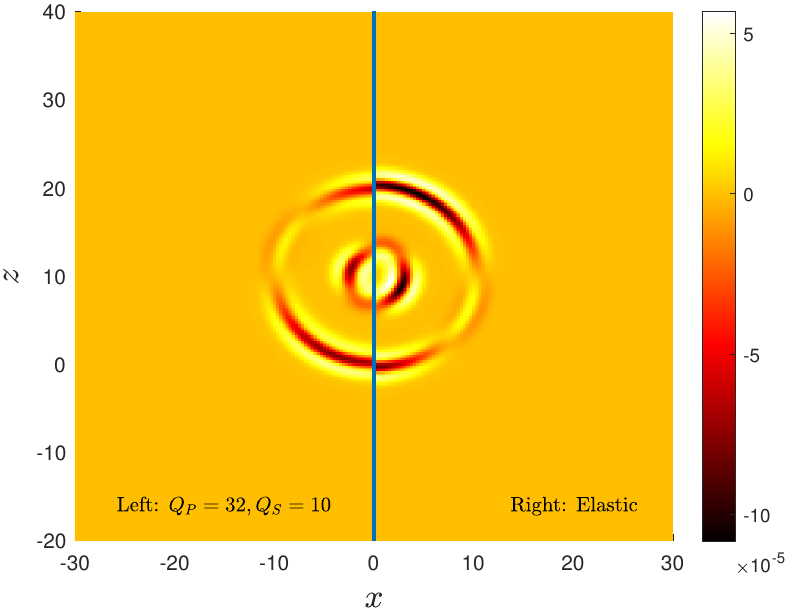}}
{\includegraphics[width=0.32\textwidth,height=0.21\textwidth]{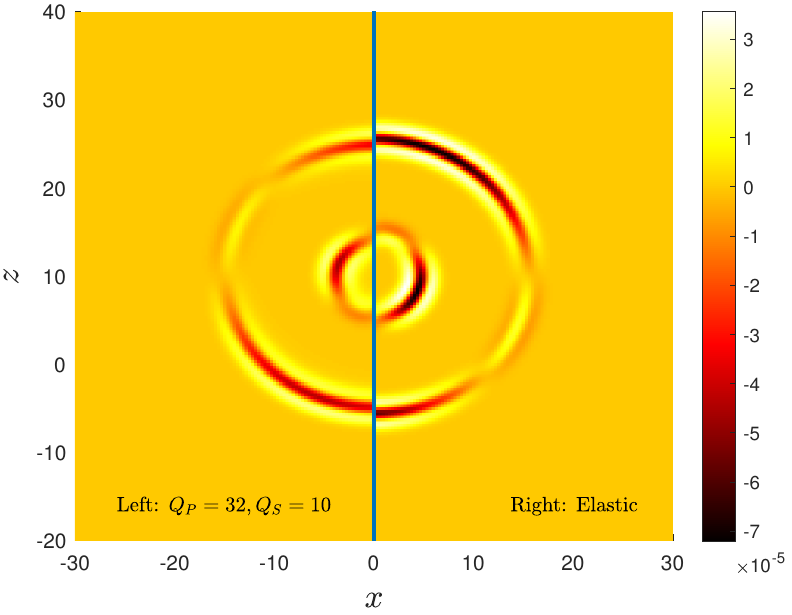}}
{\includegraphics[width=0.32\textwidth,height=0.21\textwidth]{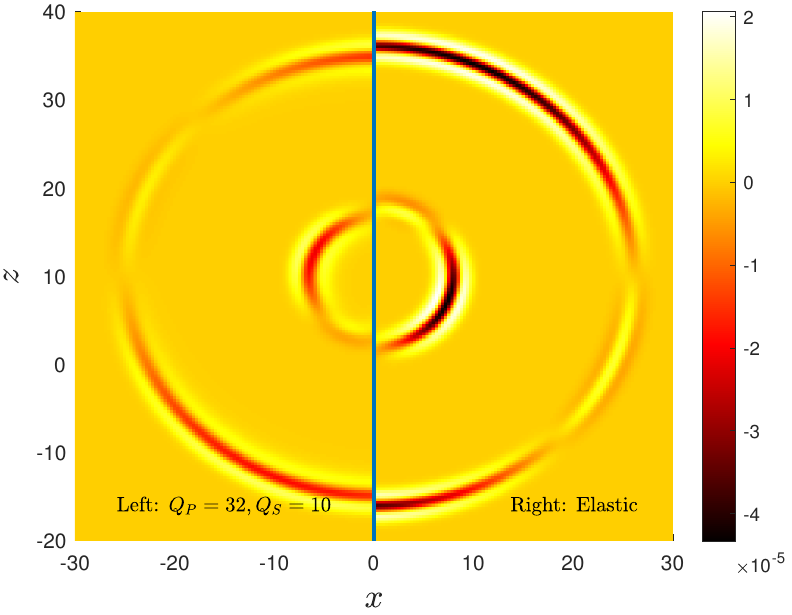}}}
\\
\centering
\subfigure[$v_3(x_1, 0, x_3)$ at $4 \to 6 \to 10$s in viscoelastic (left) or elastic (right) medium. Here $M_P=14$, $M_S=15$.]{
{\includegraphics[width=0.32\textwidth,height=0.21\textwidth]{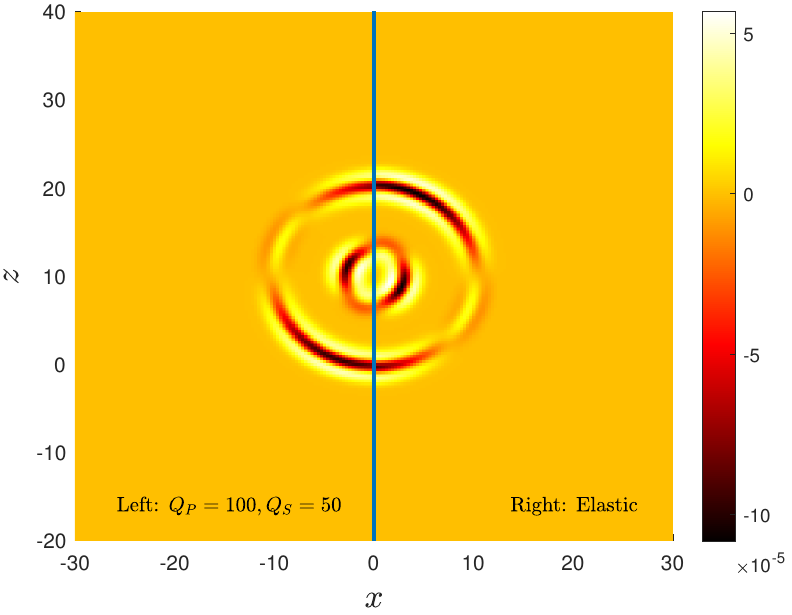}}
{\includegraphics[width=0.32\textwidth,height=0.21\textwidth]{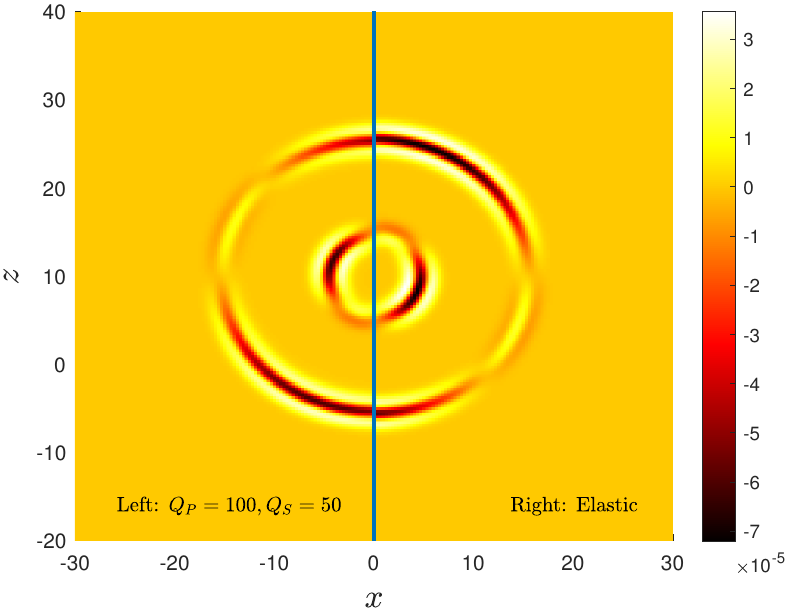}}
{\includegraphics[width=0.32\textwidth,height=0.21\textwidth]{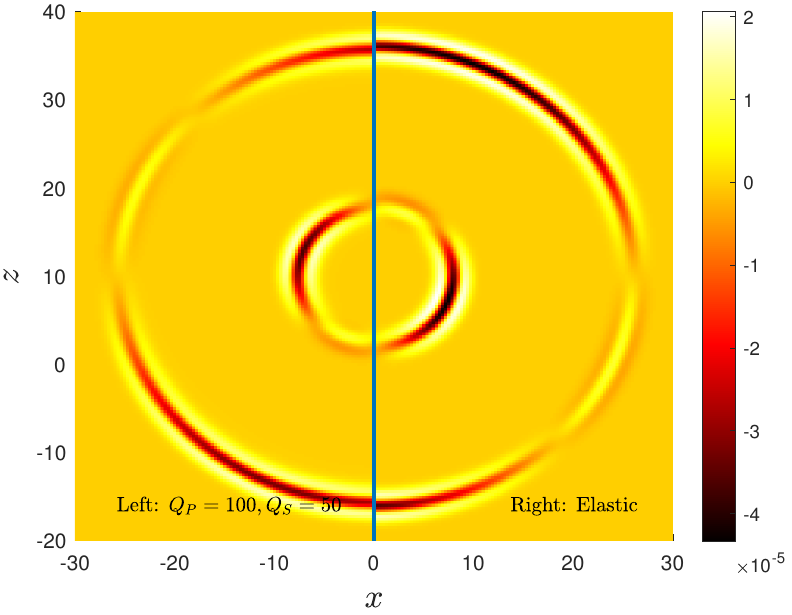}}}
\\
\centering
\subfigure[$v_3$ at $10$s. (left: $M_P=4$, $M_S=5$, middle: $M_P=18$, $M_S=19$, right: $M_P=14$, $M_S=15$).]{
{\includegraphics[width=0.32\textwidth,height=0.21\textwidth]{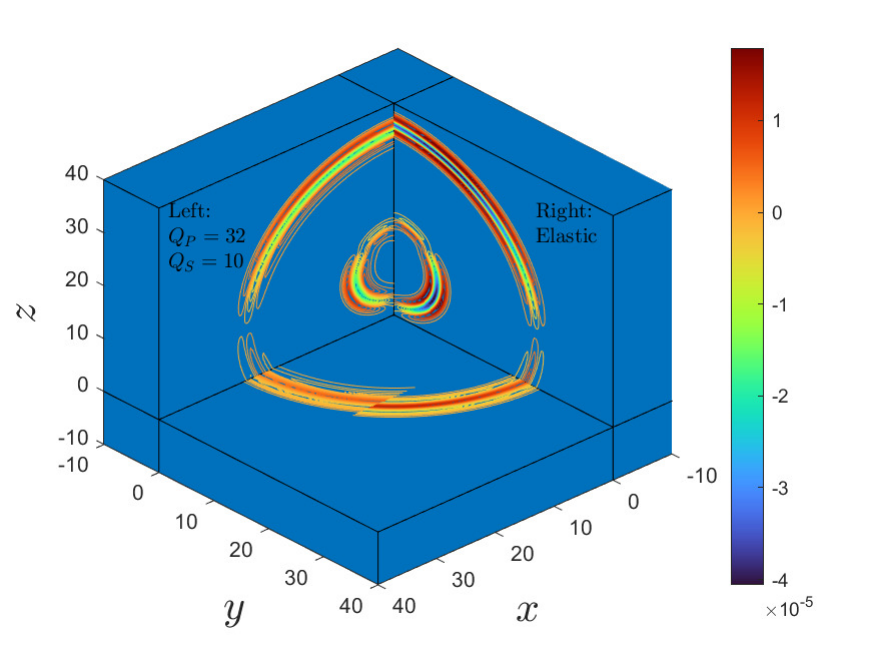}}
{\includegraphics[width=0.32\textwidth,height=0.21\textwidth]{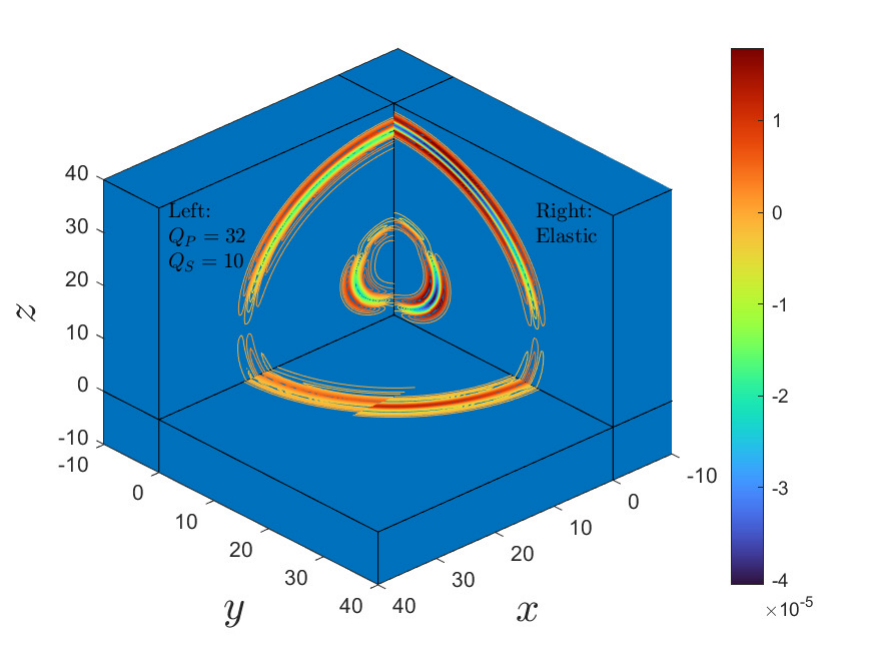}}
{\includegraphics[width=0.32\textwidth,height=0.21\textwidth]{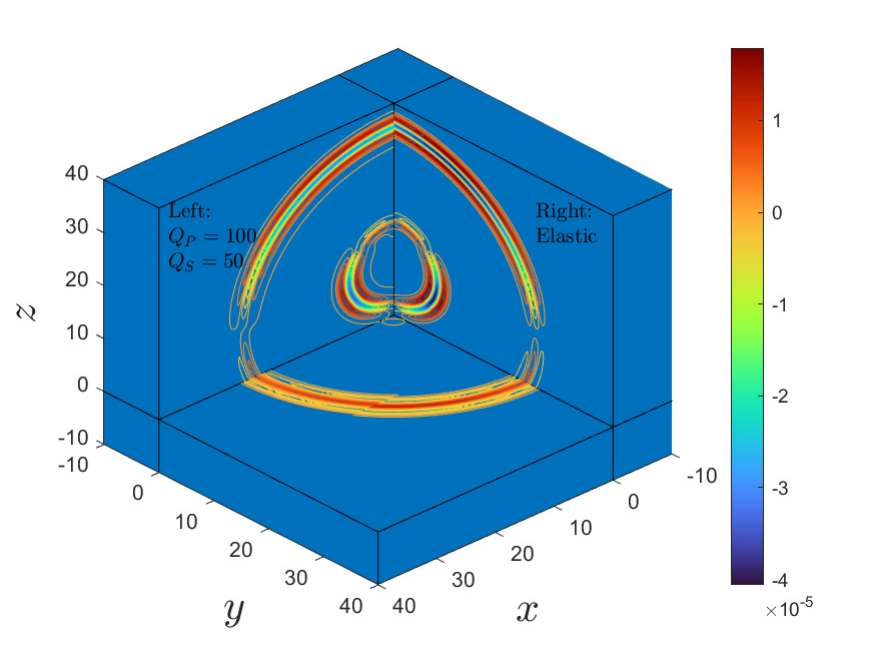}}}
\\
 \centering
 \subfigure[Comparison of the wavefield $v_3(0, 0, x_3)$ at $t=5$s (left) and $t=10$s (right). \label{PV_wavefied}]{
 {\includegraphics[width=0.49\textwidth,height=0.27\textwidth]{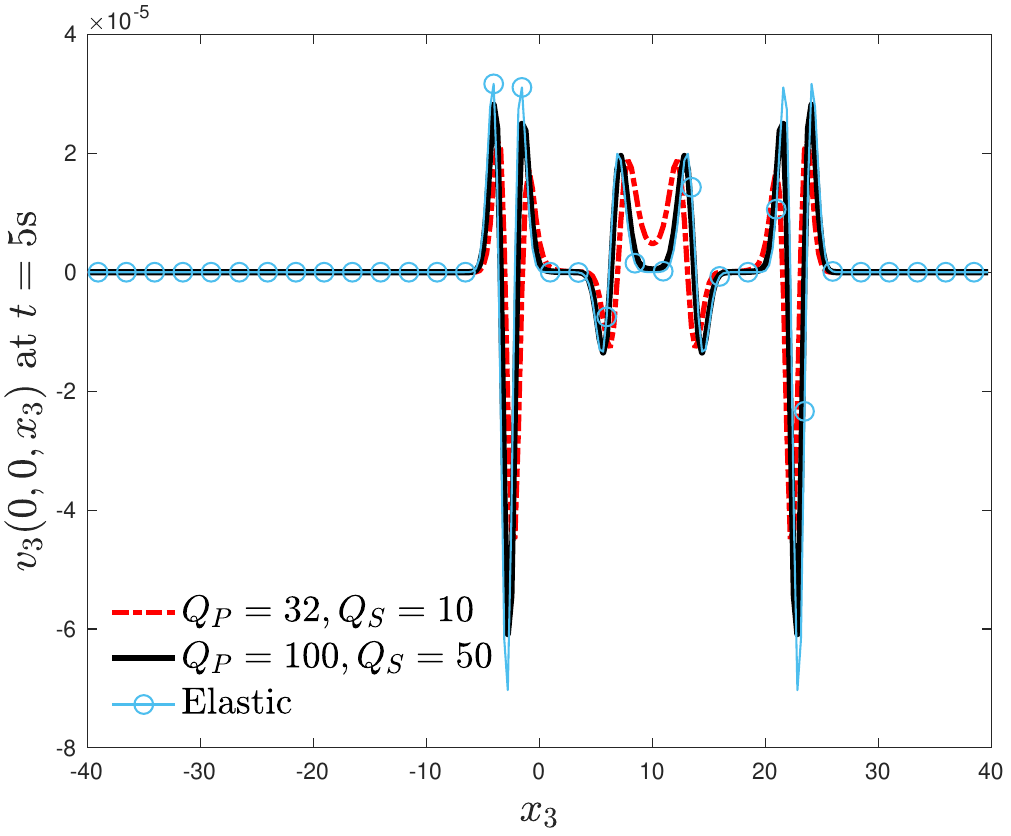}}
 {\includegraphics[width=0.49\textwidth,height=0.27\textwidth]{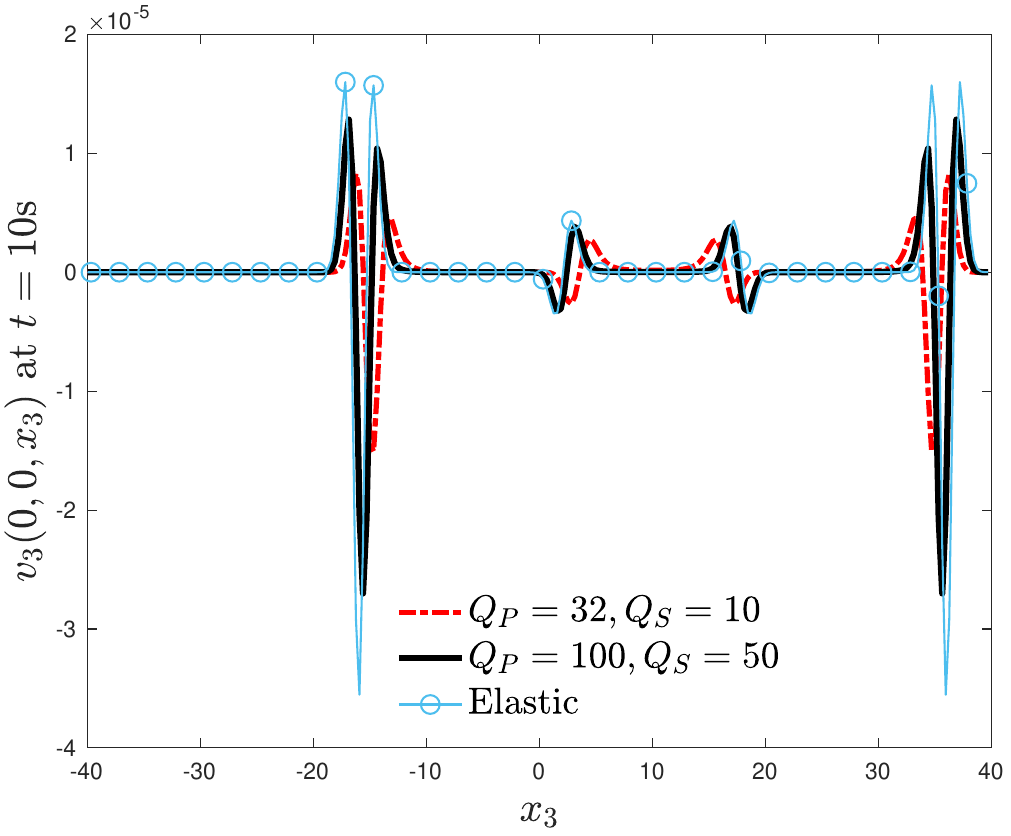}}}
\caption{\small 3-D viscoelastic wave equation in homogenous media:  Comparison of snapshots of the propagation of wavefield $v_3$.  The viscoelasticity not only influences the amplitude but also causes the lag in the first arrival time of the seismic signals. \label{viscoelastic_3d_snapshots}}
\end{figure}

\begin{figure}[!ht]
     \centering
    \subfigure[The relative errors in the wavefield $v_3(0,0,z)$ at $t = 5$s (left) and $t=10$s (right).]{{\includegraphics[width=0.49\textwidth,height=0.27\textwidth]{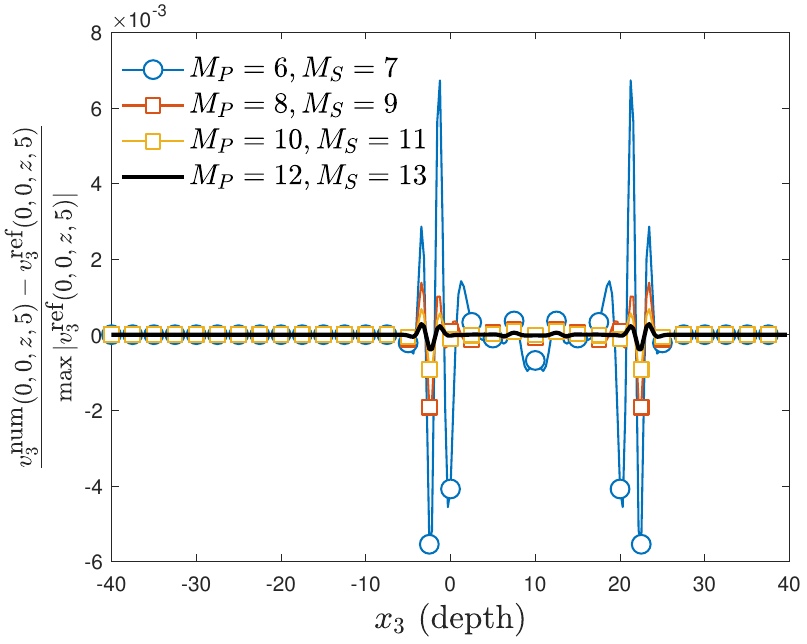}}
    {\includegraphics[width=0.49\textwidth,height=0.27\textwidth]{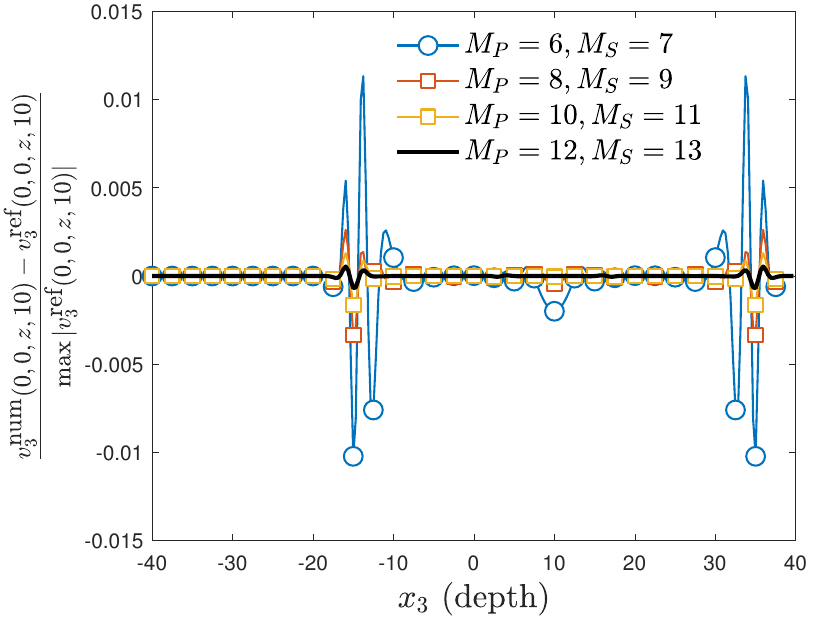}}}
    \\
    \centering
    \subfigure[Convergence with respect to the memory length $M_P + 6M_S$ at $t = 5$s (left) and $t = 10$s (right).]{
    {\includegraphics[width=0.49\textwidth,height=0.27\textwidth]{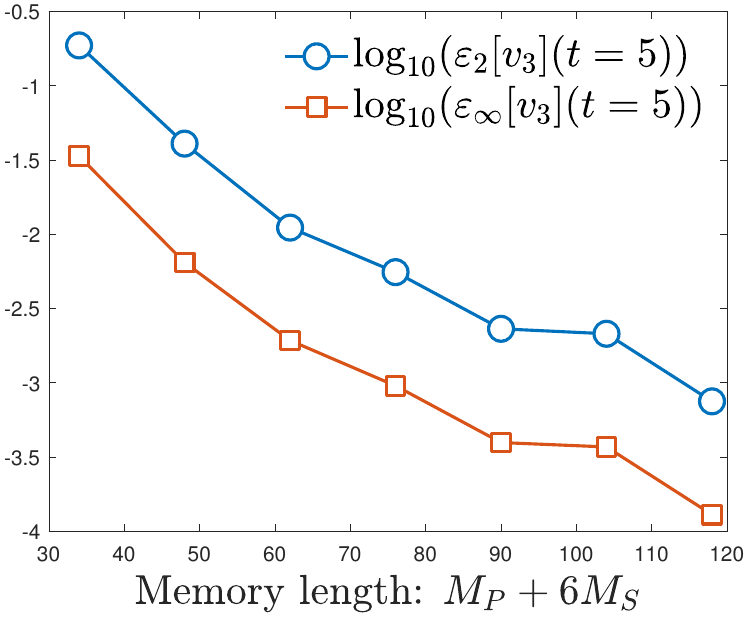}}
    {\includegraphics[width=0.49\textwidth,height=0.27\textwidth]{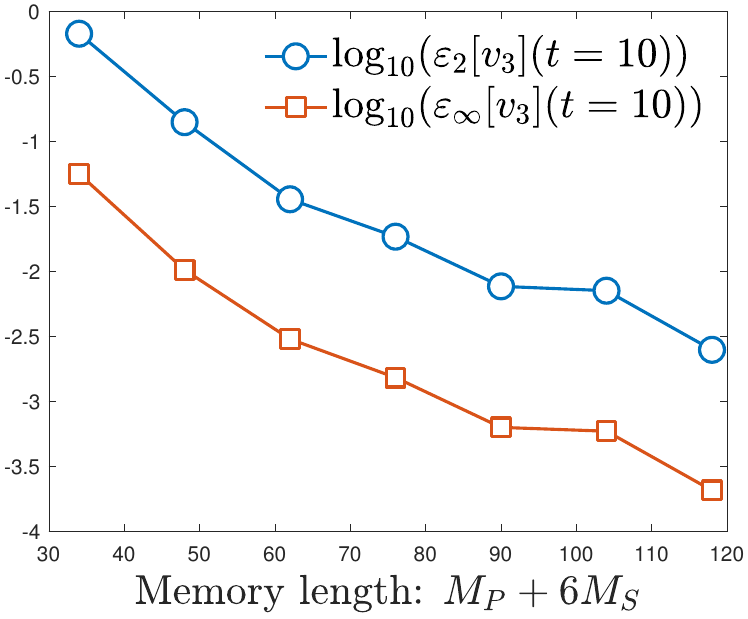}}}
    \caption{\small 3-D viscoelastic wave equation in homogenous media: The rapid convergence of nearly-optimal SOE approximation is verified. Our scheme can achieve relative error about $10^{-3}$ with $M_P = 8, M_S = 9$, say, by storing $M_P+6M_S = 62$ extra memory variables.  \label{homo_convergence}}
\end{figure} 

\begin{table}[!ht]
  \centering
  \caption{\small Homogenous case: A comparison of memory requirement and computational time up to $T=10$s (with $\Delta t = 0.005$s, 2000 steps and spatial resolution $N^3 = 256^3$) under various memory lengths $M_p$ and $M_s$. The relative maximum and $L^2$-errors at $t = 10$s exhibits the rapid convergence, which verifies the accuracy of the nearly-optimal SOE approximation. The reference solution is produced with $M_p =18$, $M_s = 19$.  \label{cpu_time}}
 \begin{lrbox}{\tablebox}
  \begin{tabular}{c|cc|cc|c|c|c}
\hline\hline
Type			&$M_P$ 	&	$M_S$	&	Wavefields	& Memory & Time(h)	& $\varepsilon_{2}[v_3](t=10)$	&	$\varepsilon_{\infty}[v_{3}](t=10)$\\
\hline
Elastic					&	0	&	0	&	9	&	2.0GB	& 	4.32	&	-		&	-		\\
\hline	
\multirow{8}{*}{$Q_P=32$, $Q_S=10$}	&	4	&	5	&	43	&	6.3GB	&	4.77 	&	6.741$\times10^{-1}$	&	5.626$\times10^{-2}$	\\
	&	6	&	7	&	57	&	7.8GB	&	4.92 	&	1.406$\times10^{-1}$	&	1.025$\times10^{-2}$	\\
	&	8	&	9	&	71	&	9.5GB	&	5.24 	&	3.587$\times10^{-2}$	&	3.011$\times10^{-3}$	\\
	&	10	&	11	&	85	&	11.3GB	&	5.42 	&	1.849$\times10^{-2}$	&	1.522$\times10^{-3}$	\\
	&	12	&	13	&	99	&	13.0GB	&	5.87 	&	7.669$\times10^{-3}$	&	6.299$\times10^{-4}$	\\
	&	14	&	15	&	113	&	14.8GB	&	6.00 	&	7.098$\times10^{-3}$	&	5.886$\times10^{-4}$	\\
	&	16	&	17	&	127	&	16.5GB	&	6.21 	&	2.489$\times10^{-3}$	&	2.061$\times10^{-4}$	\\
	&	18	&	19	&	141	&	18.3GB	&	6.43 	&	Reference	&	Reference	\\
\hline\hline
 \end{tabular}
\end{lrbox}
\scalebox{0.92}{\usebox{\tablebox}}
\end{table} 

\Cref{viscoelastic_3d_snapshots}  compares the propagation of the vibrational wavefield $v_3$ in both the elastic media and the viscoelastic media under different $Q_P$ and $Q_S$. The sections of $v_3$ at $y = 0$ and two sectional drawings are provided to present the propagation patterns in the homogenous media. Clearly, the reduction in amplitude caused by the fractional stress-strain relation becomes more evident in strongly attenuated media ($Q_P = 32$, $Q_S = 10$).  \Cref{PV_wavefied} shows that the viscoelasticity not only affects the amplitude but also alters its phase information. Specifically,  the first arrival time of the seismic signals might be delayed by strong viscoelasticity \cite{ZhuCarcioneHarris2013}, a finding that aligns with theoretical predictions \cite{HanyaSerdynska2003}. 

\Cref{homo_convergence} demonstrates the rapid convergence with respect to the memory length $M_P + 6M_S$. Once again, the new SOE approximation can achieve relative maximum error of $10^{-3}$ when $M_P = 8$, $M_S = 9$, thus only requiring $M_P+6M_S = 62$ extra memory wavefields to be stored.

Table \ref{cpu_time} records the total memory requirement and computational time for different $M_P$ and $M_S$. The short-memory operator splitting scheme takes slightly more time on solving the auxiliary dynamics \eqref{dynamics_phi_trace} and \eqref{dynamics_phi} for the memory wavefields. Nonetheless, the increase in the computational time is relatively modest compared to the Strang splitting scheme for the elastic problems, as the primary complexity remains in the calculation of the first-order spatial derivatives of velocity and stress components. Moreover, when $M_P = 8$ and $M_S = 9$, it requires less than five times the memory of the elastic case, thereby significatly reducing
the need for long-term storage.

\subsection{Viscoelastic wave propagation in 3D layered media}
\label{sec.3d.layeredmedia}

In a more challenging example, we study wave propagations in a double-layer structure.
Here, $\gamma_P$ is a function of spatial variable and different SOE approximations must be
used for each layer (see the discussions in \cref{sec.differential_form} for details). We 
compare the wavefields in both elastic and viscoelastic media, using the same source
impulse from \cref{sec.3d.homo}. The model parameters of the heterogenous media, depicted
in \cref{double_layer}, are as follows: For the upper layer spanning $50^2 \times [0, 50]$,
the group velocities are $c_P = 2.614$km/s and $c_S = 0.802$km/s, quality factors are $Q_P=32$, $Q_S=10$
and the mass density is $\rho = 2.2 \textup{g}/\textup{cm}^3$. For the lower layer spanning
$50^2 \times [-50, 0]$, the group velocities are $c_P = 3.2$km/s and $c_S = 1.85$km/s, with quality factors $Q_P=100$, $Q_S=50$ and
a mass density of $\rho = 2.5\textup{g}/\textup{cm}^3$. The domain is $[-50, 50]^3$ ($100\textup{km}\times 100 \textup{km}\times 100 \textup{km}$), and the Fourier spectral method is adopted with $N^3 = 256^3$.  
\begin{figure}[!ht]
\centering
\subfigure[A double-layer elastic structure. \label{elastic_double_layer}]{\includegraphics[width=0.49\textwidth,height=0.27\textwidth]{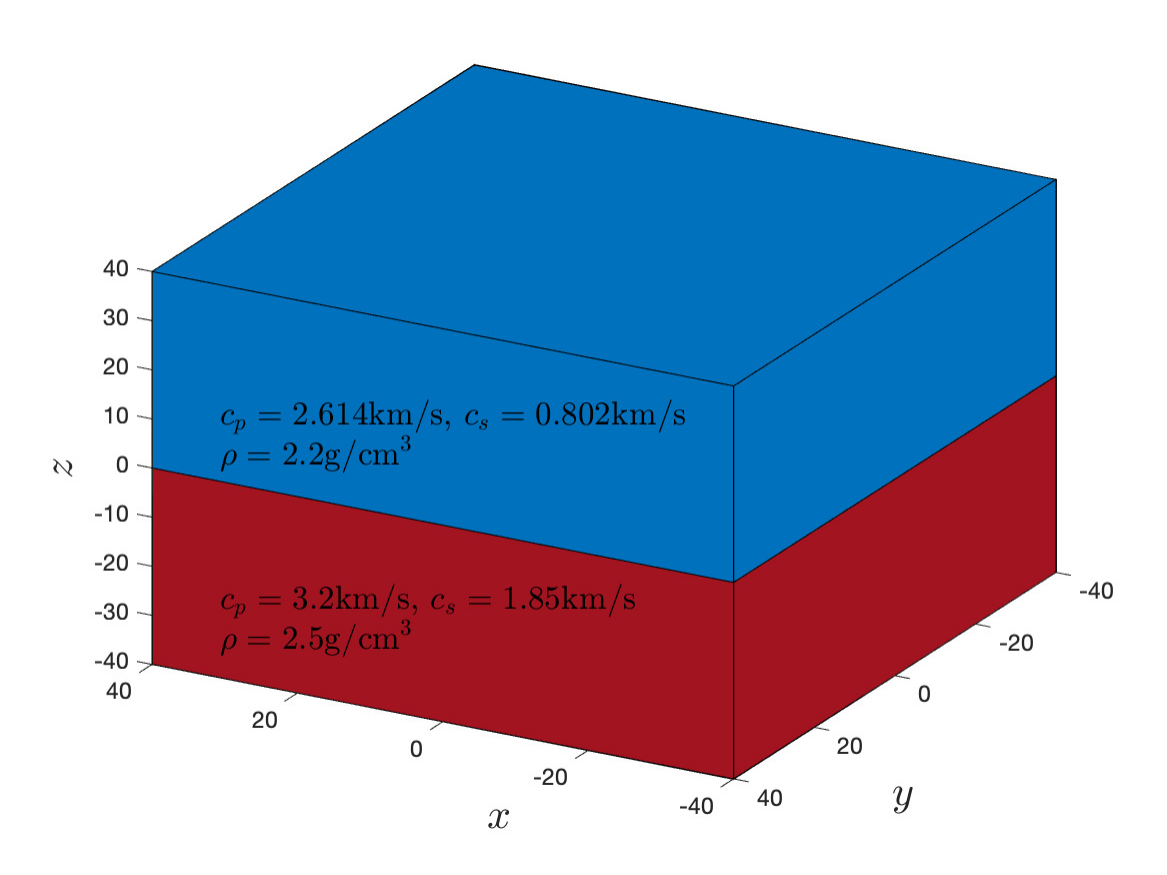}}
\subfigure[A double-layer viscoelastic structure. \label{visco_double_layer}]
{\includegraphics[width=0.49\textwidth,height=0.27\textwidth]{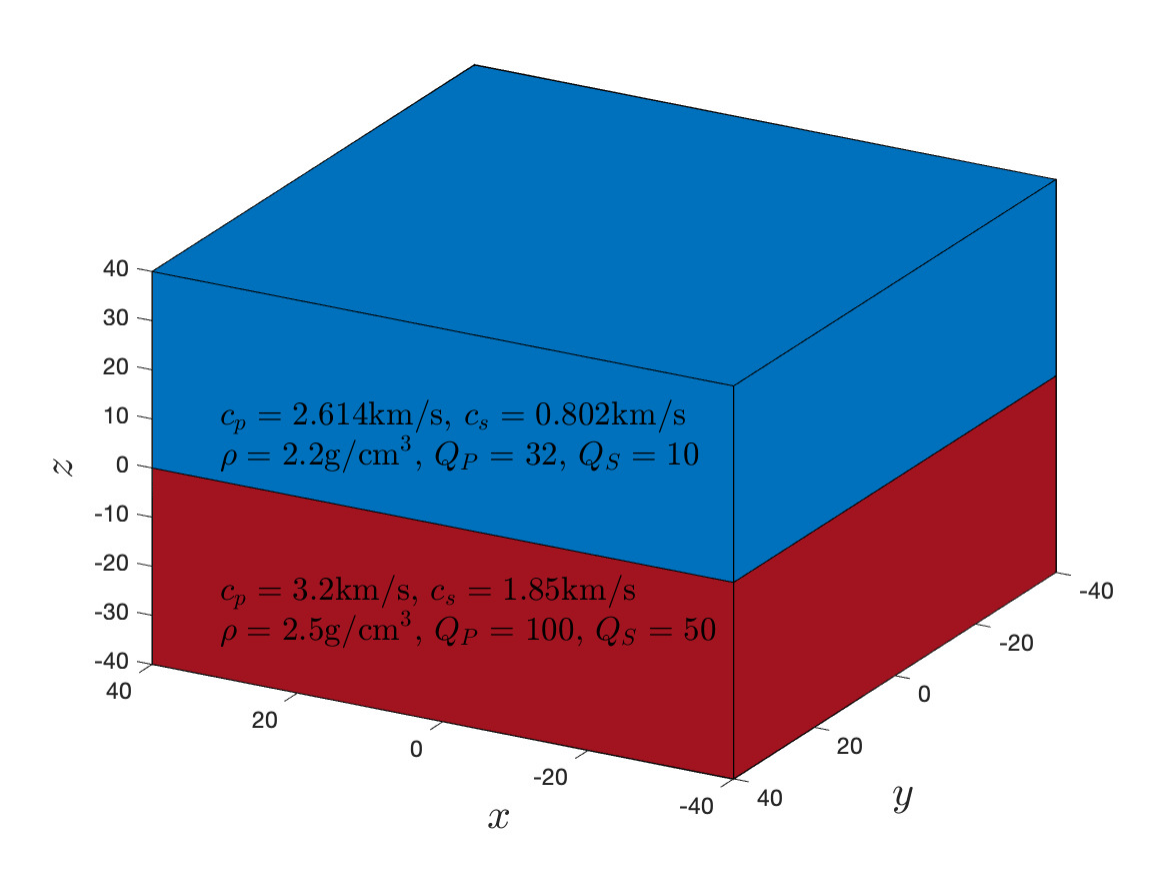}}
\caption{An illustration of a double-layer elastic structure and a viscoelastic structure. \label{double_layer}}
\end{figure}

\Cref{viscoelastic_3d_hetero_snapshots} presents the propagation of vibrational wavefield $v_3$ in both the elastic and viscoelastic double-layer media. A reflected field from the interface at $x_3 = 0$ is evident, with notable
attenuation above, attibuted to the low-quality factor of the upper medium. The lower medium
experiences much less attenuation; thus, the refracted field exhibits a high amplitude. From \cref{hetero_PV_field}, it is clear that  the material anelasticity not only delays the passage of
initial wavefront but also results in
significant phase dispersion and amplitude loss for the refracted waves,
potentially impacting the quality of inversion. 

As the qualify factors in different layers vary, it is necessary to store $M_P^{(u)} + 6 M_S^{(u)}$ memory variables for the upper layer and an additional $M_P^{(l)} + 6 M_S^{(l)}$ for the lower layer,
given the same error tolerance $\varepsilon$. The reference solutions are generated with $M_P^{(u)} = 14$, $M_S^{(u)} = 15$, $M_P^{(u)} = 12$, $M_S^{(u)} = 13$, $\varepsilon=10^{-8}$.
\Cref{hetero_convergence} illustrates a rapid convergence with respect to the memory length $M_P + 6M_S$, where $M_P = \max(M_P^{(u)}, M_P^{(l)})$ and $M_S = \max(M_S^{(u)}, M_S^{(l)})$.
This further confirms the uniform accuracy of the new SOE approximation. From
\cref{hetero_accuracy}, the relative error is about $3\%$ even with much fewer memory variables,
such as $M_P^{(u)} = 6$, $M_S^{(u)} = 7$, $M_P^{(u)} = 4$, $M_S^{(u)} = 5$.
\Cref{hetero_cpu_time} lists the total memory requirement and computational time for various memory variables. Clearly, the storage requirement and arithmetic operations have been reduced.

\begin{figure}[!ht]
\centering
\subfigure[$v_3(x_1, 0, x_3)$ at $4 \to 6 \to 10$s in a double-layer medium. Here $M_P^{(u)}=4$, $M_S^{(u)}=5$, $M_P^{(l)}=2$, $M_S^{(l)}=3$. ]{
{\includegraphics[width=0.32\textwidth,height=0.21\textwidth]{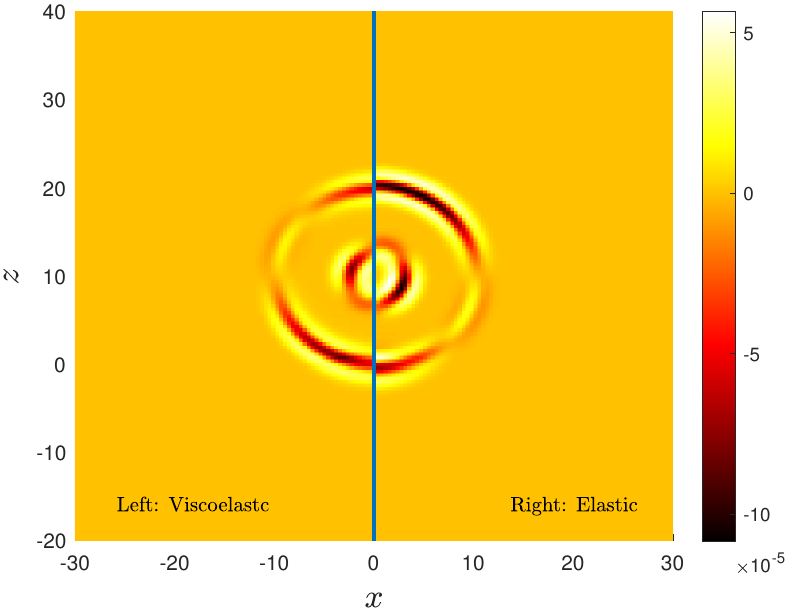}}
{\includegraphics[width=0.32\textwidth,height=0.21\textwidth]{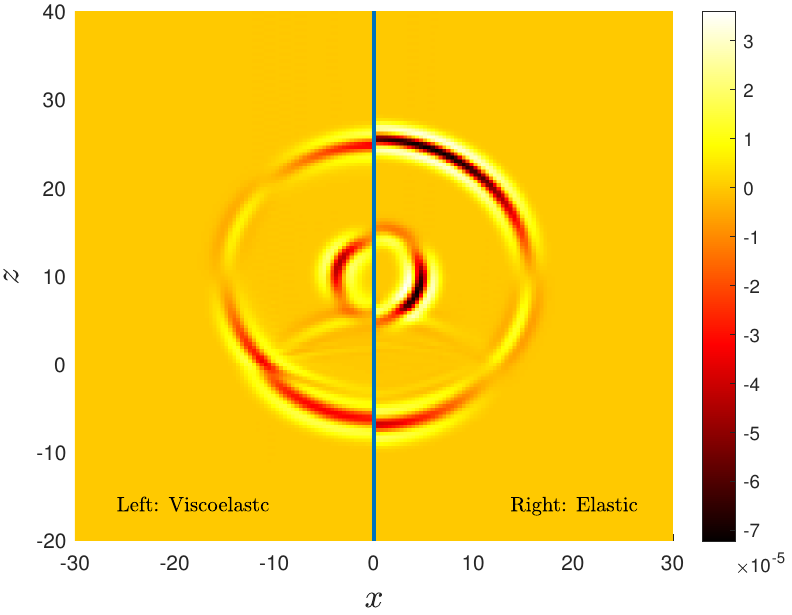}}
{\includegraphics[width=0.32\textwidth,height=0.21\textwidth]{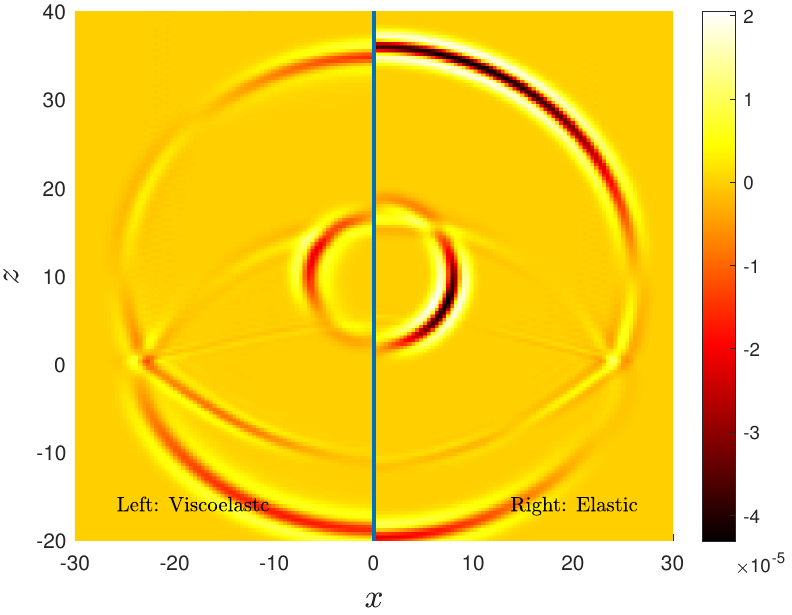}}}
\\
\centering
\subfigure[$v_3(x_1, 0, x_3)$ at $4 \to 6 \to 10$s in a double-layer medium. Here $M_P^{(u)}=14$, $M_S^{(u)}=15$, $M_P^{(l)}=12$, $M_S^{l}=13$. ]{
{\includegraphics[width=0.32\textwidth,height=0.21\textwidth]{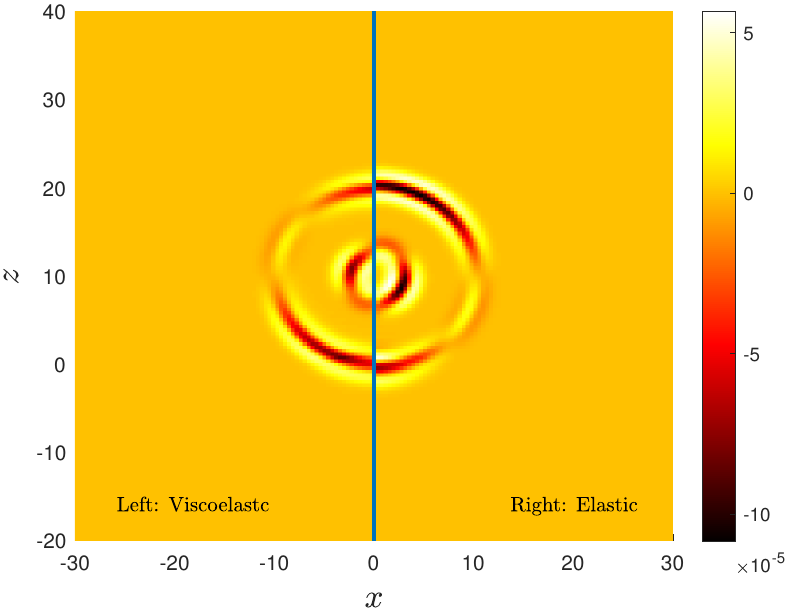}}
{\includegraphics[width=0.32\textwidth,height=0.21\textwidth]{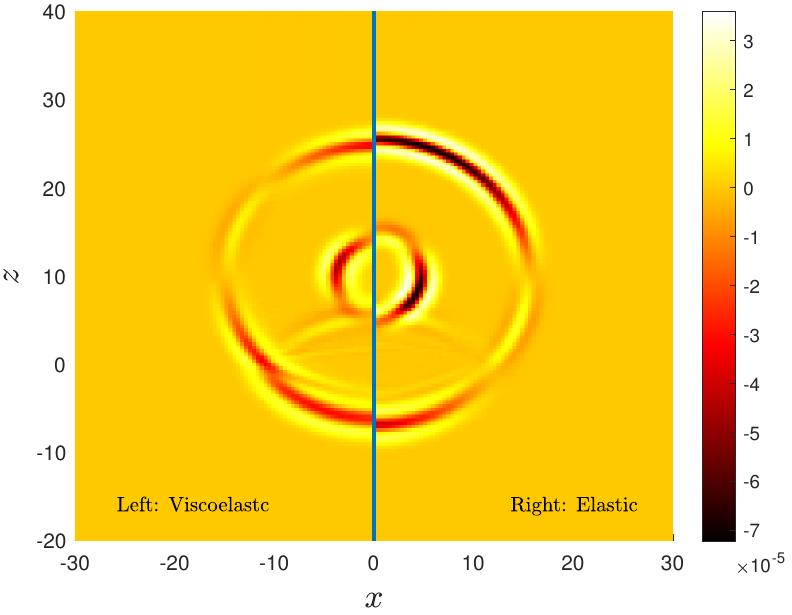}}
{\includegraphics[width=0.32\textwidth,height=0.21\textwidth]{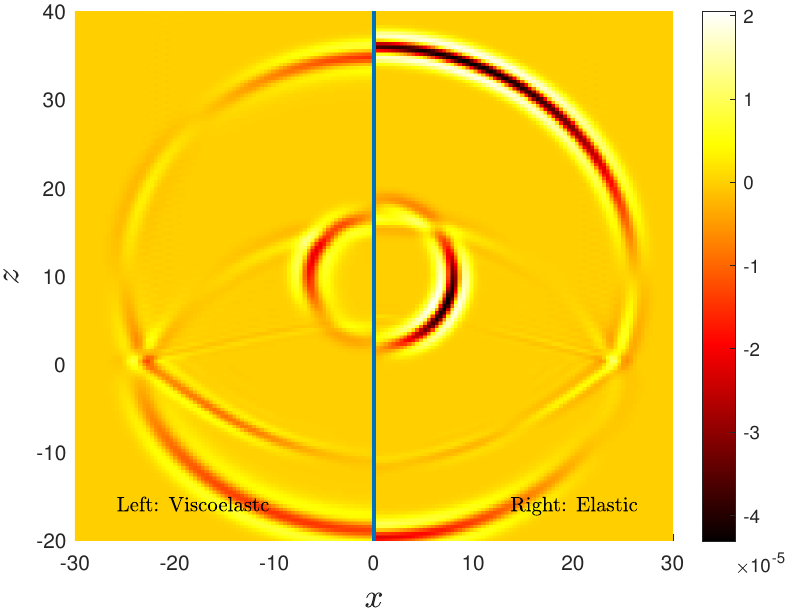}}}
\\
\centering
\subfigure[$v_3$ at 10s (left: $M_P^{(u)}=4$, $M_S^{(u)}=5$, $M_P^{(l)}=2$, $M_S^{(l)}=3$, right: $M_P^{(l)}=14$, $M_S^{(l)}=15$, $M_P^{(u)}=12$, $M_S^{(u)}=13$).]{
{\includegraphics[width=0.49\textwidth,height=0.27\textwidth]{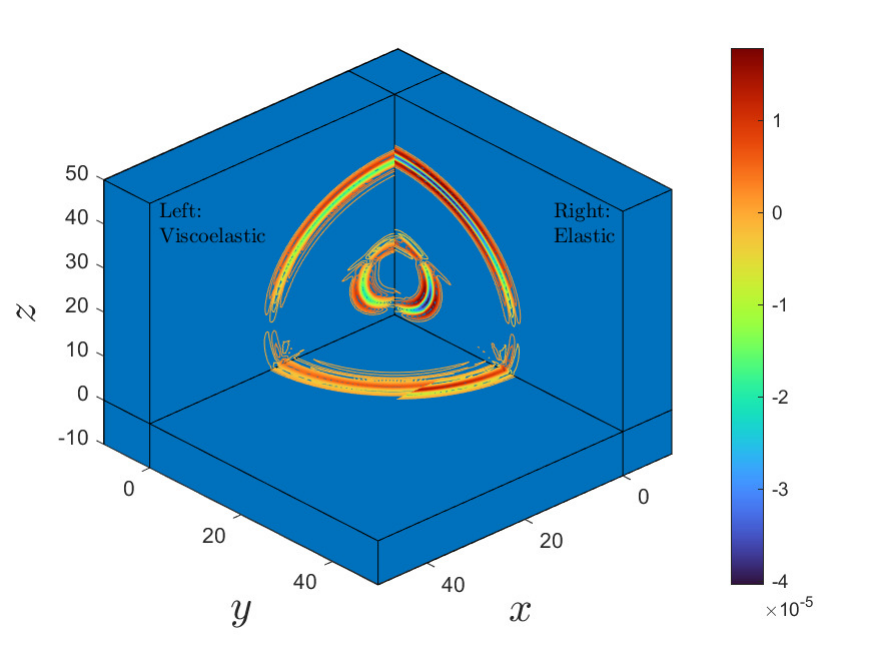}}
{\includegraphics[width=0.49\textwidth,height=0.27\textwidth]{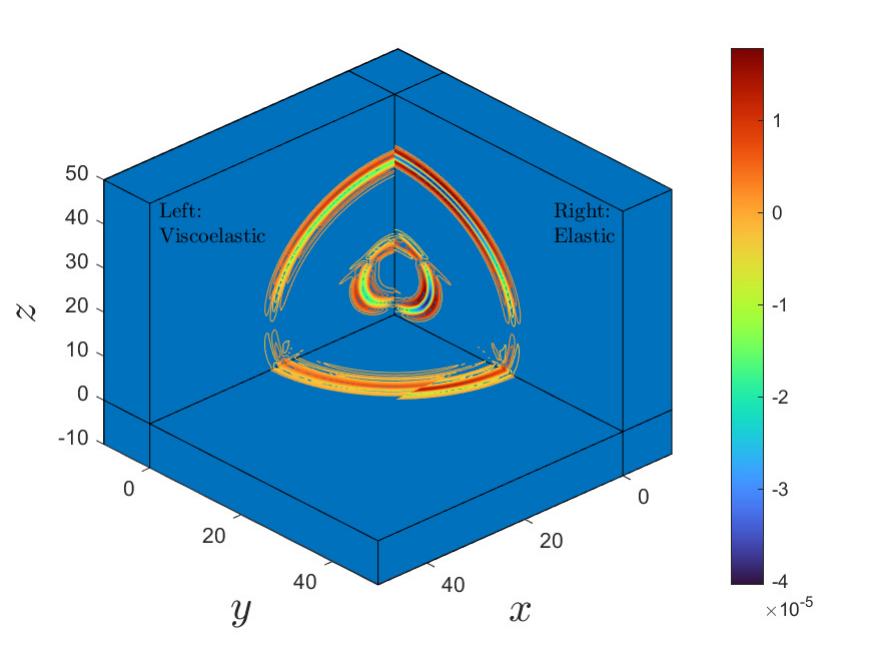}}}
\\
\centering
\subfigure[Comparison of the wavefield $v_3(0, 0, x_3)$ at $t=5$s (left) and $t=10$s (right).\label{hetero_PV_field}]{
 {\includegraphics[width=0.49\textwidth,height=0.27\textwidth]{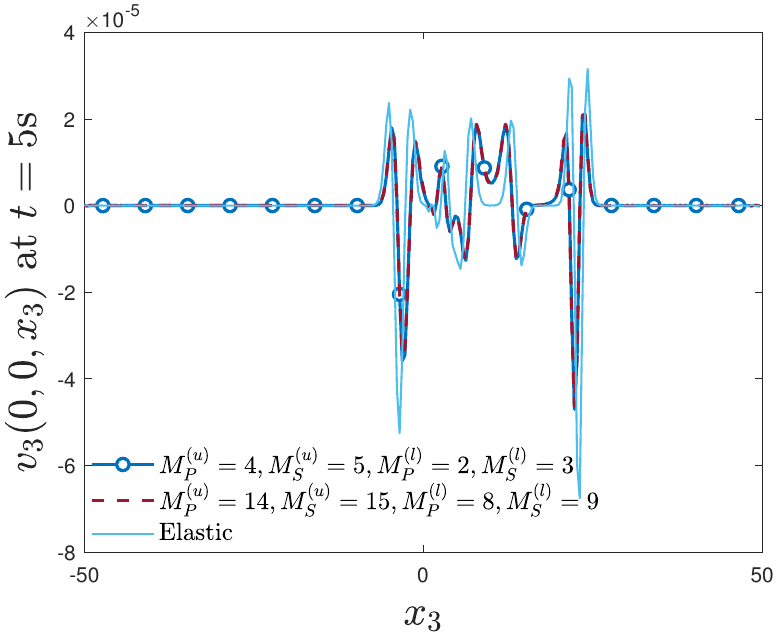}}
 {\includegraphics[width=0.49\textwidth,height=0.27\textwidth]{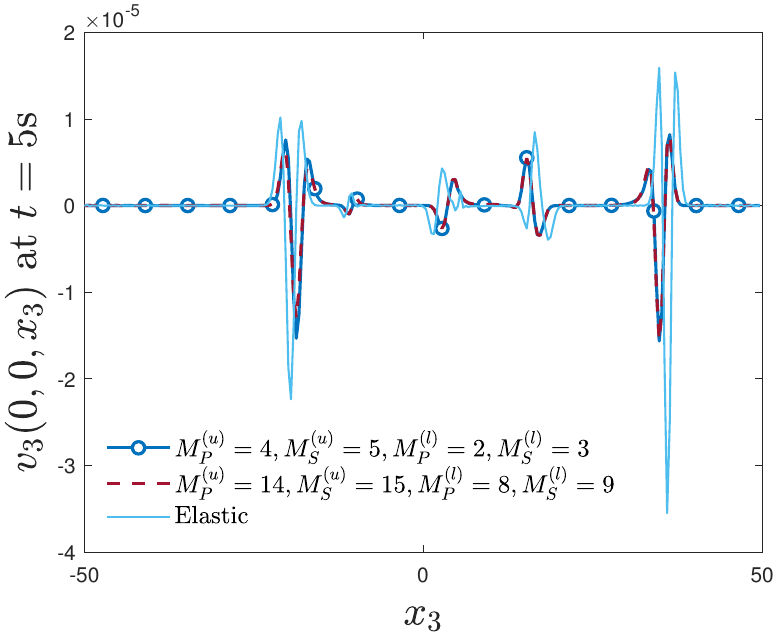}}}
\caption{\small 3-D viscoelastic wave equation in double-layer media:  Snapshots of the propagation of wavefield $v_3$.  The viscoelasticity not only influences the amplitude but also causes the lag in the first arrival time of the reflected fields. \label{viscoelastic_3d_hetero_snapshots}}
\end{figure}

\begin{figure}[!ht]
     \centering
    \subfigure[The relative errors in the wavefield $v_3(0,0,z)$ at $t = 5$s (left) and $t=10$s (right). \label{hetero_accuracy}]{
    {\includegraphics[width=0.49\textwidth,height=0.27\textwidth]{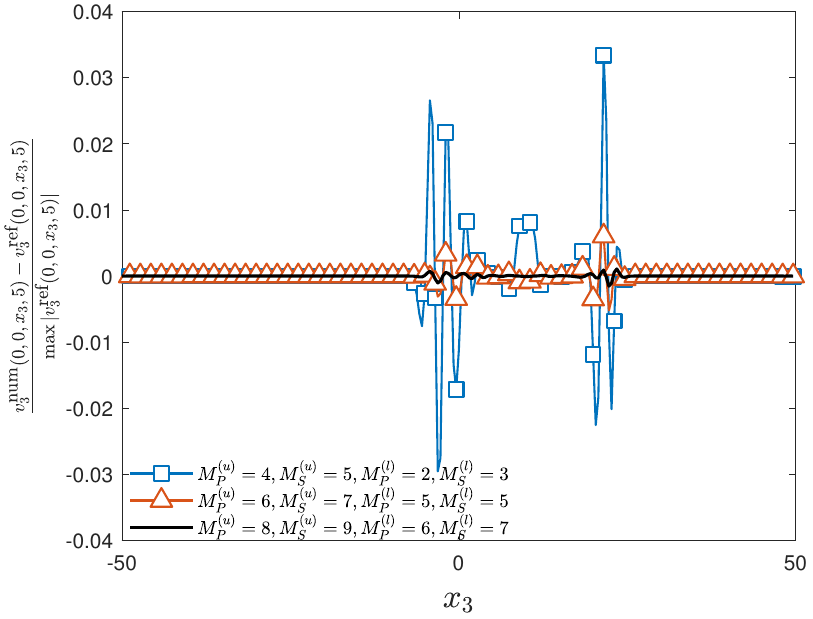}}
    {\includegraphics[width=0.49\textwidth,height=0.27\textwidth]{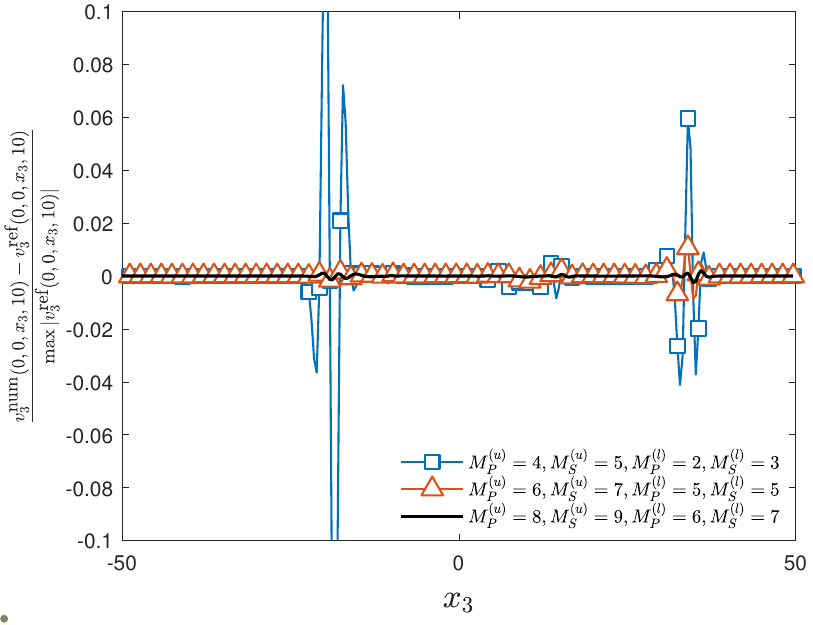}}}
    \\
    \centering
    \subfigure[Convergence with respect to the memory length $M_P + 6M_S$ at $t = 5$s (left) and $t = 10$s (right).\label{hetero_convergence}]{
    {\includegraphics[width=0.49\textwidth,height=0.27\textwidth]{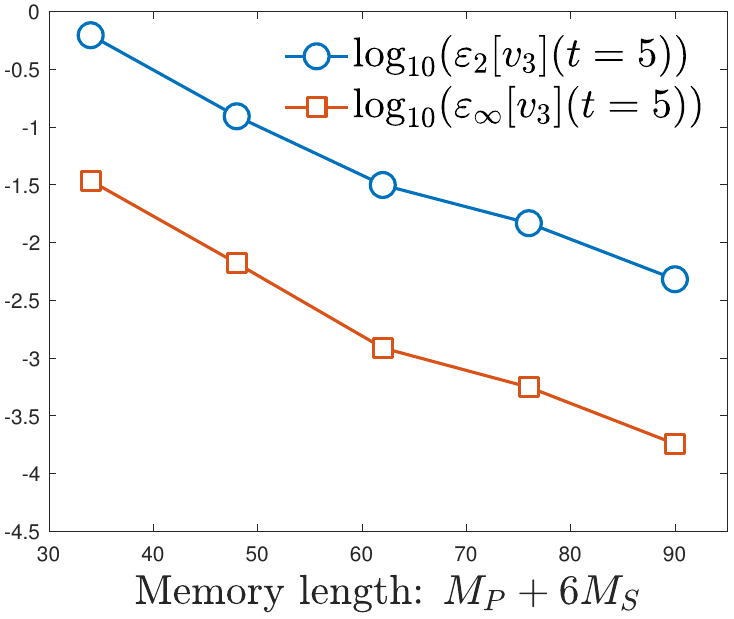}}
    {\includegraphics[width=0.49\textwidth,height=0.27\textwidth]{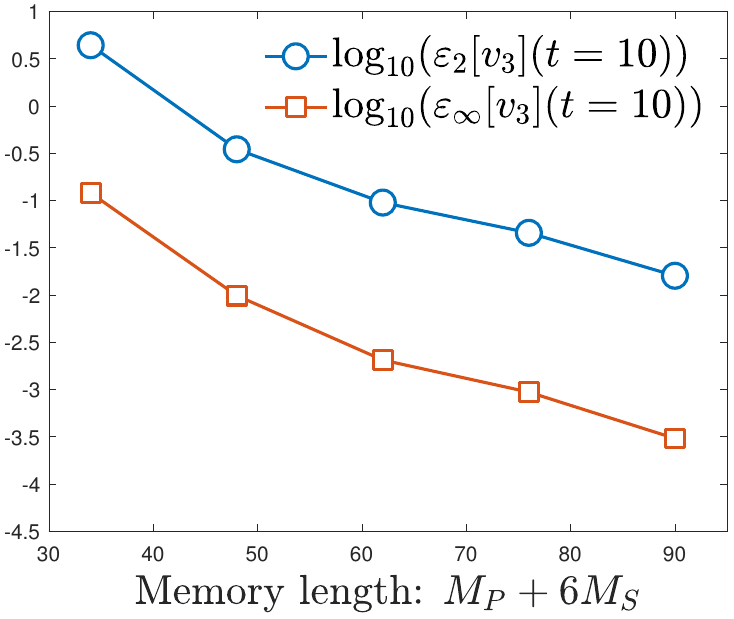}}}
    \caption{\small 3-D viscoelastic wave equation in double-layer media: The rapid convergence of nearly-optimal SOE approximation is verified. Our scheme can achieve relative error about $10^{-3}$ with $M_P^{(u)}+6M_S^{(u)} = 62$ extra memory variables for the upper layer and $M_P^{(l)}+6M_S^{(l)} = 48$ ones for the lower layer.  \label{hetero_PS_SOE_error_compare}}
\end{figure} 

\begin{table}[!ht]
  \centering
  \caption{\small heterogenous case: A comparison of memory requirement and computational time up to $T=10$s (with $\Delta t = 0.005$s, 2000 steps and spatial resolution $N^3 = 256^3$) under different memory lengths $M_p^{(u)}$, $M_p^{(l)}$, $M_s^{(u)}$ and $M_s^{(l)}$. The relative maximum and $L^2$-errors at $t = 10$s exhibit rapid convergence, which verifies the accuracy of the nearly optimal SOE approximation. The reference solution is produced with $M_P^{(u)} = 14$, $M_S^{(u)} = 15$, $M_P^{(u)} = 12$, $M_S^{(u)} = 13$. \label{hetero_cpu_time}}
 \begin{lrbox}{\tablebox}
  \begin{tabular}{cc|cc|cc|c|c|c}
\hline\hline
$M_P^{(u)}$ 	&	$M_S^{(u)}$	&$M_P^{(l)}$ 	&	$M_S^{(l)}$	& Wavefields	&	Memory & Time(h)	& $\varepsilon_{2}[v_3](t=10)$	&	$\varepsilon_{\infty}[v_{3}](t=10)$\\
\hline
4	&	5	&	2	&	3	&	43	&	6.3GB	&	7.92 	&	4.391	&	1.200$\times10^{-1}$	\\
6	&	7	&	5	&	5	&	57	&	8.0GB	&	8.94 	&	3.480$\times10^{-1}$	&	9.853$\times10^{-3}$	\\
8	&	9	&	6	&	7	&	71	&	9.6GB	&	10.76 	&	9.503$\times10^{-2}$	&	2.048$\times10^{-3}$	\\
10	&	11	&	8	&	9	&	85	&	11.3GB	&	11.86 	&	4.545$\times10^{-2}$	&	9.381$\times10^{-4}$	\\
12	&	13	&	10	&	11	&	99	&	12.9GB	&	13.42 	&	1.598$\times10^{-2}$	&	3.033$\times10^{-4}$	\\
14	&	15	&	12	&	13	&	113	&	14.6GB	&	14.93 	&	Reference				&	Reference	\\
\hline\hline
 \end{tabular}
\end{lrbox}
\scalebox{0.95}{\usebox{\tablebox}}
\end{table} 

\section{Conclusions and further discussions}
\label{sec.conclusion}

The time-fractional wave equation accounts for both wave attenuation and velocity dispersion in real viscoelastic media. However, existing numerical methods usually require storing the entire
history of wavefields due to the very small fractional exponent, which hinders its practical usage.
This paper proposes a new SOE approximation to resolve this long-memory limitation. The new SOE approximation seeks an optimal curve fitting for the power function $t^{-2\gamma}$ with the generalized Gaussian quadrature applied to a new integral representation, and requires
only half the nodes compared to existing SOE approximations. The equivalence between the SOE approximation of the continuous fractional stress-strain relation and the rheological model is established. An improved bound of the new SOE approximation provides a quantitative characterization of their differences. Numerical simulations on 3D constant-Q viscoacoustic equation and P- and S- viscoelastic wave equations have been conducted to demonstrate that the proposed method can effectively
capture changes in both amplitude and phase induced by material anelasticity. Furthermore,
the memory wavefields can be significantly compressed.

In real seismic applications, complex geological structures might exhibit strong anisotropy in each direction. Using a domain decomposition strategy, one only needs to fit the fractional stress-strain relation within each subdomain. The application of the time-fractional wave equation in seismic inversion may be a topic of our future work. 

\section*{Acknowledgement}
This research was supported by the Project funded by the Natural Science Foundation of Shandong Province for Excellent Youth Scholars (Nos.~ZR2020YQ02), the Taishan Scholars Program of Shandong Province of China (Nos.~tsqn201909044), the National Natural Science Foundation of China (No.~1210010642) and the Fundamental Research Funds for the Central Universities (Nos.~ 310421125).

\newpage 
\appendix
 \setcounter{equation}{0} 
 \renewcommand{\theequation}{A.\arabic{equation}}
\section{\label{Green}Green's function for 3-D viscoacoustic wave equation}
The Green's function $G^{(3)}(\bx, t; 1-\gamma_P)$ for 3-D viscoacoustic wave equation is given by 
\begin{equation}
G^{(3)}(\bx, t; 1-\gamma_P) = -\frac{1}{4\pi C_P |\bx| t^{2-2\gamma_P}} \frac{\partial  M_{1 - \gamma_P}(z)}{\partial z}\Big |_{z = \frac{|\bx|}{\sqrt{C_P} t^{1-\gamma_P}}}.
\end{equation}
Here the Mainardi function $M_{l}(z)$ is a special Wright function of the second kind \cite{bk:Mainardi2010}, say, $M_{l}(z) \coloneqq W_{-l, 1 - l}(-z)$, $0 < l < 1$, $z\in \mathbb{C}$. Moreover, for $-1 < \lambda < 0$, $l < 1$ and $x > 0$ \cite{Luchko2008}, it has 
\begin{equation*}
W_{\lambda, l}(- x) = \frac{1}{\pi} \int_0^{+\infty} K_{\lambda, l}(-x, r ) \D r, ~~ K_{\lambda, l}(x, r ) = r^{-l} e^{-r} \left[e^{x r^{-\lambda} \cos(\lambda \pi)} \sin(x r^{-\lambda} \sin(\pi \lambda) + \pi l)\right].
\end{equation*}

Noting that 
\begin{equation}
\begin{split}
\frac{\partial K_{\lambda, l}(x, r)}{\partial x}  =  &r^{-(l+\lambda)} e^{-r} \left[e^{x r^{-\lambda} \cos(\lambda \pi)} \sin(x r^{-\lambda} \sin(\pi \lambda) + \pi l) \cos(\lambda\pi) \right] \\
&+ r^{-(l+\lambda)} e^{-r} \left[e^{x r^{-\lambda} \cos(\lambda \pi)} \cos(x r^{-\lambda} \sin(\pi \lambda) + \pi l) \sin(\lambda\pi) \right] \\
 = & r^{-(l+\lambda)} e^{-r} \left[e^{x r^{-\lambda} \cos(\lambda \pi)} \sin(x r^{-\lambda} \sin(\pi \lambda) + \pi (l+\lambda)) \right] = K_{\lambda, \lambda+l}(x, r),
\end{split}
\end{equation}
 the derivative of the Mainardi function can be represented as
\begin{equation}
\frac{\partial M_l(x)}{\partial x} = - \frac{1}{\pi} \int_{0}^{+\infty} K_{-l, 1-2l}(x, r)\D r, \quad \frac{\partial M_l(0)}{\partial x} = - \frac{1}{\Gamma(1 - 2l)}.
\end{equation}
For sufficiently large $x$, the Mainardi function has a saddle-point approximation \cite{bk:Mainardi2010}, 
 \begin{equation}\label{saddle_point}
\begin{split}
&\frac{\partial M_l(\frac{z}{l})}{\partial z} \sim \frac{1}{l \sqrt{2\pi(1-l)}} \left(\frac{l-1/2}{1-l}z^{\frac{2l-3/2}{1-l}} - \frac{1}{l}z^{\frac{2l-1/2}{1-l}}\right)\exp\left(-\frac{1-l}{l} z^{\frac{1}{1-l}}\right), \quad |z| \to \infty.
\end{split}
\end{equation}

The derivatives of Mainardi's functions can be calculated via the Gauss-Laguerre quadrature and the saddle point approximation, and details can be found in the Appendix of \cite{XiongGuo2022}.

\clearpage
\section{Tables of nodes and weights in new SOE for typical quality factors}

\begin{table}[!ht]
  \centering
  \caption{\small The nodes and weights in new SOE for $Q = 10$, $2\gamma =2\pi^{-1}\arctan(Q^{-1})  \approx  0.0635$ to attain $|t^{-\beta} - \sum_{j=1}^{\nexp} w_j e^{-s_j t}| \le \varepsilon t^{-\beta}$ for $\delta \le t \le T$, with the final instant $T \le 10$ and temporal gap $\delta \ge 5\times 10^{-3}$. \label{SOE_table_Q10}}
 \begin{lrbox}{\tablebox}
  \begin{tabular}{cc|cc}
  \hline\hline
$\omega_j$ 	&	$s_j$	&	$\omega_j$ 	&	$s_j$	\\
\hline
 \multicolumn{2}{c|}{$\varepsilon = 10^{-2}$, $\nexp = 5$} 	&  \multicolumn{2}{c}{$\varepsilon =  10^{-3}$, $\nexp = 7$} \\
\hline
      0.1440797465700240D-10  &0.8415261929142592D+00	&	      0.1440797155149404D-10 & 0.8240830013371835D+00	\\
      0.1954901542163180D+00 & 0.1636845055652096D+00	&	      0.1322237166027692D+00 & 0.1501811717860068D+00	\\
      0.1723999921378503D+01 & 0.1323100227004369D+00	&	      0.8517903732042016D+00  &0.9648611144577410D-01	\\
      0.1229908868952335D+02 & 0.1547537593365945D+00	&	      0.3441114686073438D+01  &0.9644807153788990D-01	\\
      0.9777983682050727D+02 & 0.1869902132381615D+00	&	      0.1350088871384277D+02  &0.1069689948347042D+00	\\
&	&	      0.5492464653183761D+02  &0.1205169710604457D+00	\\
&	&	      0.2338504327514402D+03  &0.1374498357868730D+00	\\
\hline
 \multicolumn{2}{c|}{$\varepsilon = 10^{-4}$, $\nexp = 9$} 	&  \multicolumn{2}{c}{$\varepsilon =  10^{-5}$, $\nexp = 11$} \\
\hline
      0.1440797164254682D-10  &0.8112378597026695D+00	&	      0.1440797619283847D-10  &0.8010997231278196D+00	\\
      0.1009890841402805D+00  &0.1443806338391152D+00	&	      0.8197828436191787D-01  &0.1409941501440947D+00	\\
      0.5823334989875405D+00  &0.8332378558784953D-01	&	      0.4498296707524729D+00  &0.7719857356447507D-01	\\
      0.1869388736178351D+01  &0.7366620152204306D-01	&	      0.1296039075552287D+01  &0.6269524338587557D-01	\\
      0.5367910686013082D+01  &0.7625660830946936D-01	&	      0.3160106693096034D+01  &0.6078138363618767D-01	\\
      0.1531655543336788D+02  &0.8230551789077256D-01	&	      0.7409771919428429D+01  &0.6321002136990252D-01	\\
      0.4439653407641731D+02  &0.8962282114855959D-01	&	      0.1734398686592705D+02  &0.6712827947872171D-01	\\
      0.1312617293848889D+03  &0.9788699963267940D-01	&	      0.4092281319055039D+02  &0.7169125085778819D-01	\\
      0.4001238289113953D+03  &0.1103304595213679D+00	&	      0.9756294726492744D+02  &0.7671133135458129D-01	\\
	&	      &0.2354800125540291D+03  &0.8253927154623260D-01	\\
	&	      &0.5852581776377881D+03  &0.9324471363963684D-01	\\
\hline	
	 \multicolumn{2}{c|}{$\varepsilon = 10^{-6}$, $\nexp = 13$} 	&  \multicolumn{2}{c}{$\varepsilon =  10^{-7}$, $\nexp = 15$} \\
\hline
      0.1440797344443702D-10  &0.7927416768056691D+00	&	      0.1440797301373925D-10  &0.7856419964970299D+00	\\
      0.6909132948947617D-01  &0.1386674750682972D+00	&	      0.5974823699768806D-01  &0.1369104454153603D+00	\\
      0.3693004676261487D+00  &0.7379286075670194D-01	&	      0.3144306059252867D+00  &0.7163361769803406D-01	\\
      0.1005372969224108D+01  &0.5678739803068599D-01	&	      0.8284526998828101D+00  &0.5329152980471319D-01	\\
      0.2235671903599100D+01  &0.5201787459951138D-01	&	      0.1744202695136431D+01  &0.4672404154667911D-01	\\
      0.4653227235497959D+01  &0.5208079978662095D-01	&	      0.3366361346154247D+01  &0.4508321149250512D-01	\\
      0.9520871769286824D+01  &0.5406183576472692D-01	&	      0.6296947578816033D+01  &0.4569204223027506D-01	\\
      0.1946905282824594D+02  &0.5680493830595190D-01	&	      0.1167667863594848D+02  &0.4728179409751884D-01	\\
      0.3999161843226904D+02  &0.5991962774574524D-01	&	      0.2164708182376960D+02  &0.4930960413677760D-01	\\
      0.8264328348675848D+02  &0.6329721168957117D-01	&	      0.4024043478382247D+02  &0.5157105998736930D-01	\\
      0.1719522811815946D+03  &0.6699623446010711D-01	&	      0.7508563548322061D+02  &0.5399937693294307D-01	\\
      0.3613875662253792D+03  &0.7162043024628401D-01	&	      0.1406994214974785D+03  &0.5660145559683093D-01	\\
      0.7832726493548571D+03  &0.8143769999391330D-01	&	      0.2650393315555953D+03  &0.5952492491283821D-01	\\
	&	&	      0.5041800183069405D+03  &0.6350067238989261D-01	\\
	&	&	      0.9906709743596185D+03  &0.7274601644296926D-01	\\
\hline\hline
 \end{tabular}
\end{lrbox}
\scalebox{0.8}{\usebox{\tablebox}}
\end{table}

\begin{table}[!ht]
  \centering
  \caption{\small  The nodes and weights in new SOE for $Q = 32$, $2\gamma =2\pi^{-1}\arctan(Q^{-1}) \approx  0.0199$ to attain $|t^{-\beta} - \sum_{j=1}^{\nexp} w_j e^{-s_j t}| \le \varepsilon t^{-\beta}$ for $\delta \le t \le T$, with the final instant $T \le 10$ and temporal gap $\delta \ge 5\times 10^{-3}$.\label{SOE_table_Q32}}
 \begin{lrbox}{\tablebox}
  \begin{tabular}{cc|cc}
  \hline\hline
$s_j$ 	&	$\omega_j$	&	$s_j$ 	&	$\omega_j$	\\
\hline
 \multicolumn{2}{c|}{$\varepsilon = 10^{-2}$, $\nexp = 4$} 	&  \multicolumn{2}{c}{$\varepsilon =  10^{-3}$, $\nexp = 6$} \\
\hline
      0.1233271244807241D-27  &0.9507315622743330D+00	&	      0.1233271244807303D-27  &0.9307117960462683D+00	\\
      0.2446613360651417D+00  &0.5909871860442109D-01	&	      0.7955490392898384D-01  &0.5410148687511617D-01	\\
      0.3188494705145181D+01  &0.5108257353637974D-01	&	      0.6400754573055761D+00  &0.3427590000250326D-01	\\
      0.4374046349798036D+02  &0.6006469461273273D-01	&	      0.3406885956980713D+01  &0.3469155299642254D-01	\\
	&	&	      0.1938932219421636D+02  &0.3838452138989368D-01	\\
	&	&	      0.1244724672270921D+03  &0.4246441195877351D-01	\\
\hline
 \multicolumn{2}{c|}{$\varepsilon = 10^{-4}$, $\nexp = 8$} 	&  \multicolumn{2}{c}{$\varepsilon =  10^{-5}$, $\nexp = 10$} \\
\hline
      0.1233271244806500D-27  &0.9257639935683603D+00	&	      0.1233271244808180D-27  &0.9342219590785330D+00	\\
      0.5960153475701349D-01  &0.5267049620909144D-01	&	      0.8687488238252516D-01  &0.4970195686328412D-01	\\
      0.4181740222757921D+00  &0.2931551656158208D-01	&	      0.5015374242392519D+00  &0.2548030796317139D-01	\\
      0.1574911535622993D+01  &0.2518001231324935D-01	&	      0.1529011432214429D+01  &0.2044968316336439D-01	\\
      0.5391000223314470D+01  &0.2570793408412855D-01	&	      0.4035666163812632D+01  &0.1963888134955552D-01	\\
      0.1891729095316842D+02  &0.2722073088872728D-01	&	      0.1039920617855594D+02  &0.1995668303240832D-01	\\
      0.6935590107043559D+02  &0.2891757255269721D-01	&	      0.2692365365300807D+02  &0.2052617564942962D-01	\\
      0.2670138625582335D+03  &0.3114529866907899D-01	&	      0.7037020812725662D+02 & 0.2113029151314107D-01	\\
	&	&	      0.1857132902212041D+03  &0.2178984669856504D-01	\\
	&	&	      0.5017183552663466D+03  &0.2340202613949829D-01	\\
\hline	
	 \multicolumn{2}{c|}{$\varepsilon = 10^{-6}$, $\nexp = 12$} 	&  \multicolumn{2}{c}{$\varepsilon =  10^{-7}$, $\nexp = 14$} \\
\hline
      0.1233271244834537D-27  &0.9311450359309102D+00	&	      0.1233271246154151D-27  &0.9224243946813435D+00	\\
      0.7309291099348825D-01  &0.4918924204721100D-01	&	      0.4565633910836096D-01  &0.4882502116086868D-01	\\
      0.4081678128820871D+00  &0.2436636893296938D-01	&	      0.2549389400613811D+00  &0.2394565431114788D-01	\\
      0.1156721990103773D+01  &0.1841162414492538D-01	&	      0.7027385624420864D+00  &0.1727215242352003D-01	\\
      0.2722207854479077D+01  &0.1679826283180114D-01	&	      0.1543612188551032D+01  &0.1481758253849639D-01	\\
      0.6089902279269454D+01  &0.1663193579207381D-01	&	      0.3117126857811117D+01  &0.1407730402715572D-01	\\
      0.1351634416771198D+02  &0.1690890326587400D-01	&	      0.6138445183215538D+01  &0.1406238726348420D-01	\\
      0.3010967506196511D+02  &0.1729694788251238D-01	&	      0.1206000042888941D+02  &0.1431217725498386D-01	\\
      0.6743945752842677D+02  &0.1768423697908001D-01	&	      0.2381472369894094D+02  &0.1464745537413458D-01	\\
      0.1516100483822089D+03  &0.1802327708903234D-01	&	      0.4736179967023599D+02  &0.1501032214584595D-01	\\
      0.3414653124039590D+03  &0.1838524006642803D-01	&	      0.9490353496367300D+02  &0.1538740114341876D-01	\\
      0.7811858483832548D+03  &0.1969170630104930D-01	&	      0.1916971393801842D+03  &0.1579940949347425D-01	\\
	&	&	      0.3916535995905957D+03  &0.1639177497211432D-01	\\
	&	&	      0.8266960970664088D+03  &0.1810583027037206D-01	\\
\hline\hline
 \end{tabular}
\end{lrbox}
\scalebox{0.8}{\usebox{\tablebox}}
\end{table} 

\begin{table}[!ht]
  \centering
  \caption{\small  The nodes and weights in new SOE for $Q = 50$, $2\gamma =2\pi^{-1}\arctan(Q^{-1}) \approx  0.0127$ to attain $|t^{-\beta} - \sum_{j=1}^{\nexp} w_j e^{-s_j t}| \le \varepsilon t^{-\beta}$ for $\delta \le t \le T$, with the final instant $T \le 10$ and temporal gap $\delta \ge 5\times 10^{-3}$.\label{SOE_table_Q50}}
 \begin{lrbox}{\tablebox}
  \begin{tabular}{cc|cc}
  \hline\hline
$s_j$ 	&	$\omega_j$	&	$s_j$ 	&	$\omega_j$	\\
\hline
 \multicolumn{2}{c|}{$\varepsilon = 10^{-2}$, $\nexp = 3$} 	&  \multicolumn{2}{c}{$\varepsilon =  10^{-3}$, $\nexp = 5$} \\
\hline
      0.3291459309486449D-43  &0.9715159334883926D+00	&	      0.3291459309486449D-43  &0.9654032847335073D+00	\\
      0.3844977954137703D+00  &0.4494377137687228D-01	&	      0.1805141287964131D+00  &0.3541837895493161D-01	\\
      0.1269217440126866D+02  &0.5029910225330850D-01	&	      0.1603711027271578D+01  &0.2495583781684037D-01	\\
	&	&	      0.1131524795360649D+02  &0.2656277187664437D-01	\\
	&	&	      0.9103455146880404D+02  &0.2933916818950616D-01	\\
\hline
 \multicolumn{2}{c|}{$\varepsilon = 10^{-4}$, $\nexp = 7$} 	&  \multicolumn{2}{c}{$\varepsilon =  10^{-5}$, $\nexp = 9$} \\
\hline
      0.3291459309486449D-43  &0.9613696925562936D+00	&	      0.3291459309486449D-43  &0.9583545824772974D+00	\\
      0.1230055319059663D+00  &0.3335077521224538D-01	&	      0.9421313698694372D-01  &0.3257050484288297D-01	\\
      0.8113385187918014D+00  &0.1901845369491019D-01	&	      0.5594525408055533D+00  &0.1684018645821342D-01	\\
      0.3262465799193611D+01  &0.1760194323773591D-01	&	      0.1799033081583265D+01  &0.1390725893797957D-01	\\
      0.1273359255878506D+02  &0.1827779127280318D-01	&	      0.5146221618958586D+01  &0.1362728285045673D-01	\\
      0.5190919485559557D+02  &0.1930292644345800D-01	&	      0.1464101300598260D+02  &0.1397871517834396D-01	\\
      0.2237616215581631D+03  &0.2060469071916570D-01	&	      0.4245035087292208D+02  &0.1447337311323433D-01	\\
	&	&	      0.1260744262177191D+03  &0.1502467492901445D-01	\\
	&	&	      0.3878389209183667D+03  &0.1606353834964511D-01	\\
\hline	
	 \multicolumn{2}{c|}{$\varepsilon = 10^{-6}$, $\nexp = 11$} 	&  \multicolumn{2}{c}{$\varepsilon =  10^{-7}$, $\nexp = 13$} \\
\hline
      0.3291459309486449D-43  &0.9559470828794648D+00	&	      0.3291459309486449D-43  &0.9539432545841504D+00	\\
      0.7659112881304787D-01  &0.3217361279926604D-01	&	      0.6461018484650244D-01  &0.3193336413074308D-01	\\
      0.4338032622992327D+00  &0.1585545388214821D-01	&	      0.3568618864911536D+00  &0.1533257985633187D-01	\\
      0.1256503454895687D+01  &0.1209976708574026D-01	&	      0.9785073400536500D+00  &0.1113044331387444D-01	\\
      0.3060007372246838D+01  &0.1116945869588176D-01	&	      0.2177669719981179D+01  &0.9746907371580906D-02	\\
      0.7156010727142673D+01  &0.1112346488285604D-01	&	      0.4525655673256203D+01  &0.9387145829502928D-02	\\
      0.1671889839142481D+02  &0.1133261712262682D-01	&	      0.9242537672647016D+01  &0.9399519115875818D-02	\\
      0.3944184916633604D+02  &0.1161367233920682D-01	&	      0.1887578469291693D+02  &0.9535911538885434D-02	\\
      0.9421749253717621D+02  &0.1192214677313131D-01	&	      0.3876276251779908D+02  &0.9713434014349787D-02	\\
      0.2283913597457030D+03  &0.1230024163842561D-01	&	      0.8018021213022408D+02  &0.9907610910937507D-02	\\
      0.5714449581431992D+03  &0.1329624863545951D-01	&	      0.1672104632942592D+03  &0.1012302317471267D-01	\\
	&	&	      0.3527186673439197D+03  &0.1044035591633067D-01	\\
	&	&	      0.7683347210259111D+03  &0.1142908470080125D-01	\\
\hline\hline
 \end{tabular}
\end{lrbox}
\scalebox{0.8}{\usebox{\tablebox}}
\end{table}

\begin{table}[!ht]
  \centering
  \caption{\small  The nodes and weights in new SOE for $Q = 100$, $2\gamma =2\pi^{-1}\arctan(Q^{-1}) \approx  0.0064$ to attain $|t^{-\beta} - \sum_{j=1}^{\nexp} w_j e^{-s_j t}| \le \varepsilon t^{-\beta}$ for $\delta \le t \le T$, with the final instant $T \le 10$ and temporal gap $\delta \ge 5\times 10^{-3}$.\label{SOE_table_Q100}}
 \begin{lrbox}{\tablebox}
  \begin{tabular}{cc|cc}
  \hline\hline
$s_j$ 	&	$\omega_j$	&	$s_j$ 	&	$\omega_j$	\\
\hline
 \multicolumn{2}{c|}{$\varepsilon = 10^{-2}$, $\nexp = 2$} 	&  \multicolumn{2}{c}{$\varepsilon =  10^{-3}$, $\nexp = 5$} \\
\hline
      0.1717351273768353D-90  &0.9847183908720862D+00	&	      0.1717351273768353D-90  &0.9816124627012480D+00	\\
      0.3016163347415830D+00  &0.1946540375040938D-01	&	      0.1513258304760864D+00  &0.1749586258243257D-01	\\
	&	&	      0.1243840094780066D+01  &0.1190462163297126D-01	\\
	&	&	      0.8568483928967764D+01  &0.1322284392208735D-01	\\
	&	&	      0.7517155220295190D+02  &0.1506498116754216D-01	\\
\hline
 \multicolumn{2}{c|}{$\varepsilon = 10^{-4}$, $\nexp = 6$} 	&  \multicolumn{2}{c}{$\varepsilon =  10^{-5}$, $\nexp = 8$} \\
\hline
      0.1717351273768353D-90  &0.9813979844412281D+00	&	      0.1717351273768353D-90  &0.9796394795856220D+00	\\
      0.1444688547833037D+00  &0.1725070612123545D-01	&	      0.1056560707574286D+00  &0.1667886723036181D-01	\\
      0.1062521612149110D+01  &0.1054578706877307D-01	&	      0.6560288935303707D+00  &0.8844634987341105D-02	\\
      0.5267690362852202D+01  &0.1032992162487232D-01	&	      0.2306291959214068D+01  &0.7630982018328110D-02	\\
      0.2703967558153169D+02  &0.1094329897487313D-01	&	      0.7494069882130154D+01  &0.7643894387121303D-02	\\
      0.1513162921362690D+03  &0.1168001387206569D-01	&	      0.2475380224244508D+02  &0.7895263738883522D-02	\\
	&	&	      0.8455768375945300D+02  &0.8193476657837220D-02	\\
	&	&	      0.3014062855622324D+03  &0.8669821820757132D-02	\\
\hline	
	 \multicolumn{2}{c|}{$\varepsilon = 10^{-6}$, $\nexp = 10$} 	&  \multicolumn{2}{c}{$\varepsilon =  10^{-7}$, $\nexp = 12$} \\
\hline
      0.1717351273768353D-90  &0.9782726238548010D+00	&	     0.1717351273768353D-90  &0.9771544730076503D+00	\\
      0.8372863302094305D-01  &0.1643118013623608D-01	&	      0.6947921924972357D-01  &0.1629754539058215D-01	\\
      0.4855505122592347D+00  &0.8148771417133997D-02	&	      0.3895637403085457D+00  &0.7808468229375633D-02	\\
      0.1469355573407596D+01  &0.6388908896360456D-02	&	      0.1094368348489869D+01  &0.5750937211655103D-02	\\
      0.3830248867072182D+01  &0.6035304461848845D-02	&	      0.2532138118439746D+01  &0.5133367812236056D-02	\\
      0.9747221022108484D+01  &0.6067666015209520D-02	&	      0.5541693247062111D+01  &0.5003442742181595D-02	\\
      0.2501534578981606D+02  &0.6195686630251158D-02	&	      0.1201464446288306D+02  &0.5031093636773381D-02	\\
      0.6524253082974616D+02  &0.6348046228383885D-02	&	      0.2617235093833228D+02  &0.5105593812435561D-02	\\
      0.1733585387699787D+03  &0.6525047468028670D-02	&	      0.5751774621815093D+02  &0.5194450859496128D-02	\\
      0.4760435514260730D+03  &0.6968897297155060D-02	&	      0.1276960094034739D+03  &0.5292138293131431D-02	\\
	&	&	      0.2871575858061491D+03 & 0.5429770443927676D-02	\\
	&	&	      0.6667008790524359D+03  &0.5876242926897542D-02	\\
\hline\hline
 \end{tabular}
\end{lrbox}
\scalebox{0.8}{\usebox{\tablebox}}
\end{table}

\end{document}